\definecolor{blue}{rgb}{0.0, 0.313, 0.608}
\newtheorem{theorem}{Theorem}[section]
\newtheorem{lemma}[theorem]{Lemma}
\newtheorem{remark}[theorem]{Remark}
\newtheorem{proposition}[theorem]{Proposition}
\newtheorem{definition}[theorem]{Definition}
\newcommand\norm[1]{\left\lVert#1\right\rVert}
\newcommand{\R}{\mathbb{R}}
\newcommand{\N}{\mathbb{N}}	
\newcommand{\e}{\varepsilon}
\title{Traveling waves and transverse instability for the fractional Kadomtsev-Petviashvili equation}
\author{Handan Borluk\footnote{\texttt{handan.borluk@ozyegin.edu.tr}} \\  
{\small Ozyegin University, Department of Basic Sciences},
{\small  Cekmekoy, Istanbul,  Turkey} \\
\\
Gabriele Bruell\footnote{\texttt{gabriele.bruell@kit.edu, gabriele.brull@math.lth.se}} \\ 
 {\small Karlsruhe Institute of Technology, Department of Mathematics},
 {\small Karlsruhe, Germany}\\
  {\small Lund University, Centre for Mathematical Sciences, Lund, Sweden}\\
\\
Dag Nilsson\footnote{\texttt{nilsson@math.uni-sb.de}} \\ 
{\small Saarland University, Department of Mathematics},
{\small Saarbrücken, Germany}}
\begin{document}

\maketitle

\begin{abstract}

Of concern are traveling wave solutions for the fractional Kadomtsev--Petviashvili (fKP) equation. The existence of periodically modulated solitary wave solutions is proved by dimension breaking bifurcation. Moreover, the line solitary wave solutions and their transverse (in)stability are discussed.
Analogous to the classical Kadmomtsev--Petviashvili (KP) equation, the fKP equation comes in two versions: fKP-I and fKP-II. We show that the line solitary waves of fKP-I equation are transversely linearly instable. We also perform numerical experiments to observe the (in)stability dynamics of line solitary waves for both fKP-I and fKP-II equations.

\end{abstract}

\emph{Keywords:}  fractional Kadomtsev-Petviashvili equation, dimension breaking bifurcation, transverse instability, solitary waves, Petviashvili iteration, exponential time differencing. \\

\renewcommand{\theequation}{\arabic{section}.\arabic{equation}}
\setcounter{equation}{0}
\section{Introduction}
The present paper is devoted to the study of traveling waves and transverse (in)stability for the fractional Kadomtsev--Petviashvili (fKP) equation
\begin{equation*}\label{fkp-general}
	(u_t+uu_x-D^\alpha u_x)_x+\sigma u_{yy}=0,
\end{equation*}
where $\tfrac{1}{3}<\alpha \leq  2$ and $\sigma=\pm 1$.
Here the real function $u=u(t,x,y)$ depends on the spatial variables $x,y\in \R$ and the temporal variable $t\in \R_+$.
The linear operator $D^\alpha$ denotes the Riesz potential of order $\alpha$ in the $x$ direction and it is defined by multiplication with $|\cdot|^\alpha$ on the frequency space, that is
\begin{equation*}
	(D^\alpha f)~\widehat{ }~(t,\xi, \eta)=|\xi|^\alpha \hat{f}(t,\xi, \eta),
\end{equation*}
where the hat denotes the Fourier transform $\hat{f}(\xi)=\int_{\mathbb{R}}f(x)\mathrm{e}^{-\mathrm{i}x\xi}\ \mathrm{d}x$.
In case of $\alpha=2$  equation \eqref{fkp-general} takes the form of the classical Kadomtsev--Petviashvili (KP) equation
which was introduced by Kadomtsev and Petviashvili \cite{kp} as a weakly two-dimensional extension of the celebrated Korteweg--de Vries (KdV) equation,
\begin{equation*}\label{kdv}
	u_t+uu_x+u_{xxx}=0,
\end{equation*}
that is a spatially one-dimensional equation appearing in the context of small-amplitude shallow water-wave model equations. The KP equation is called KP-I when $\sigma =-1$ and KP-II when $\sigma=1$. Roughly speaking, the KP-I equation represents the case of strong surface tension, while the KP-II equation appears as a model equation for weak surface tension\footnote{Notice that by  the change of variables $u(x,y,t)\mapsto -u(x,y,-t)$ the parameter $\sigma$ shifts in front of $u_{xxx}$.}. Thus, the KP-I equation has limited relevance in the context of water waves, since strong surface tension effects are rather dominant in thin layers including viscous forces. However the KP-I equation also appears for instance as  a long wave limit for the Gross--Pitaevskii equation \cite{BGS}.
Analogously to the classical case, the  fKP  equation is a two-dimensional extension of the fractional Korteweg--de Vries (fKdV) equation
\begin{equation}\label{fkdv}
	u_t+uu_x-D^\alpha u_x=0
\end{equation}
and  \eqref{fkp-general} is referred to as the fKP-I equation when  $\sigma=-1$ and as the fKP-II equation when $\sigma=1$.
Notice that for
$\alpha =1$ in \eqref{fkp-general} we recover the KP-version of the Benjamin--Ono equation. During the last decade there has been a growing  interest in fractional regimes as the fKdV or the fKP equation \eqref{fkdv} (see for example \cite{albert1, duran, fonseca, franklenzmann, klein, linares1, linares2, natali, pava} and the references therein). Even though most of these equations are not derived by asymptotic expansions from governing equations  in fluid dynamics, they can be thought of as dispersive corrections.

\subsection{Some properties of the fKP equations}
Formally, the fKP equation does not only conserve the $L^2$--norm
\[
M(u)=\int_{\R^2} u^2\,d(x,y),
\]
but also the energy
\[
E_\alpha(u):=\int_{\R^2}\left( \frac{1}{2} (D^\frac{\alpha}{2} u)^2-\tfrac{1}{6}u^3-\tfrac{1}{2}\sigma(\partial_x^{-1}u_y)^2 \right)\, d(x,y).
\]
Here, the operator $\partial_x^{-1}$ is defined as a Fourier multiplier operator on the $x$-variable as
$
\widehat{\partial_x^{-1}u}(t,\xi,y)=\frac{1}{i\xi}\hat u(t,\xi,y).
$
Notice that the corresponding energy space
\[
X_\frac{\alpha}{2}(\R^2):=\{u\in L^2(\R^2)\mid D^\frac{\alpha}{2}u , \partial_x^{-1}u_y\in L^2(\R^2)\}
\]
includes a zero-mass constraint with respect to $x$.
We refer to \cite{linares1} for derivation issues and well-posedness results for the Cauchy problem associated with \eqref{fkp-general}.
In \cite{molinet3,molinet2} the authors established global well-posedness on the background of a non-localized solution (as for instance the line solitary wave solutions, which are localized in $x$-direction and trivially extended in $y$-direction) for the classical KP-I and KP-II equations. 
The fKP equation is invariant under the scaling
\[
u_\lambda(t,x,y)=\lambda^\alpha u(\lambda^{\alpha+1}t,\lambda x, \lambda^{\frac{\alpha+2}{2}}y),
\]
and $\|u_\lambda\|_{L^2}=\lambda^{\frac{3\alpha-4}{4}}\|u\|_{L^2}$. Thus, $\alpha=\frac{4}{3}$ is the $L^2$-critical exponent for the fKP equation. The ranges $\alpha>\frac{4}{3}$  and $\alpha<\frac{4}{3}$ are called \emph{sub}- and \emph{supercritical}, respectively.

\medskip

A traveling wave solution $u(t,x,y)=\phi(x-ct,y)$ of the of the fKP-I equation propagating in $x$-direction with wave speed $c>0$, satisfies  the steady equation
\begin{equation}\label{eq:steady_KPI}
	\left(-c\phi +\frac{1}{2}\phi^2-D^\alpha \phi\right)_{xx}-\phi_{yy}=0.
\end{equation}
A traveling wave solution $u(t,x)=Q_c(x-ct)$ of the one-dimensional fKdV equation such that  $Q_c(x-ct)\rightarrow 0$ as $|x-ct|\rightarrow \infty$, is called a \emph{solitary wave solution}. It has been shown that the fKdV equation possesses solitary wave solutions for $\alpha>\frac{1}{3}$ \cite{arnesen,weinstein}. It is clear that $Q_c$ is a $y$-independent solution of the steady KP-I equation \eqref{eq:steady_KPI} and we refer to such solutions as \emph{line solitary wave solutions} of the fKP equation. Solutions of the fKP-I equation with a traveling solitary wave profile in the $x$-direction and periodic in the $y$-direction are called \emph{periodically modulated solitary wave solutions}.

\subsection{Aim of the paper and results}
Our study on the fKP equation \eqref{fkp-general} is twofold. On the one hand we establish the existence of two-dimensional traveling waves for the fKP-I equation, which are periodically modulated solitary wave solutions.
On the other hand, we investigate the transverse instability of the line solitary wave solution for the fKP-I equation analytically and support this result by numerical experiments.
Both results have in common that they substantially rely on the spectral properties of the appearing linearized operator
\begin{equation}\label{eq:LL}
	L:=-\partial_x \left( D^\alpha -Q_c+c \right) \partial_x.
\end{equation}
We show that on appropriate function spaces the operator $L$ is self-adjoint with a sole simple negative eigenvalue and the rest of the spectrum is included in $[0,\infty)$.
Eventually, we also investigate  the transverse stability for the fKP-II equation numerically.
	Here, we give a brief summary of the results and state the main theorems.

\paragraph*{Existence of periodically modulated solitary wave solutions}
We prove that periodically modulated line solitary wave solutions for the fKP-I equation emerge from the line solitary wave solutions in a \emph{dimension breaking} bifurcation, in which a spatially inhomogeneous solution emerges from a solution which is homogeneous in at least one spatial variable, see \cite{haragus-kirchgassner}. Our result reads:

\begin{theorem}[Existence of periodically modulated solitary waves]\label{thm:main1}
	Let $\tfrac{1}{3}<\alpha< 2$ and $Q_c(x-ct)$ be a ground state solitary wave solution of the fKdV equation with wave speed $c>0$. Then there exists an open neighborhood $I\subset \R$ of the origin and a continuously differentiable branch
		\[
	\{ (\phi(s),\omega(s))\mid s\in I\}\subset C_{\text{b}}(\R; H^{1+\alpha}(\R))\times (0,\infty)
	\]

	of solutions of the steady KP-I equation \eqref{eq:steady_KPI}, which are even in $x$ and $2\pi/\omega(s)$-periodic in $y$, with $\omega(0)=\sqrt{|\lambda|}$, where $\lambda<0$ is the sole simple negative eigenvalue of the linear operator L.
\end{theorem}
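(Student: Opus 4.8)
The plan is to realize the periodically modulated waves through \emph{dimension breaking}, treating the transverse variable $y$ as an evolutionary (time-like) variable and seeking periodic orbits that bifurcate from the $y$-independent equilibrium furnished by the line solitary wave $Q_c$. Concretely, I would first rewrite the steady equation \eqref{eq:steady_KPI} as a first-order spatial-dynamics system in $y$. Setting $V=(\phi,\partial_x^{-1}\phi_y)$ and using that $Q_c$ solves the fKdV profile equation $-cQ_c+\tfrac12 Q_c^2-D^\alpha Q_c=0$, one obtains a system of the form $\partial_y V=\mathcal A V+\mathcal N(V)$ on a phase space of the type $H^{1+\alpha}(\R)\times H^{s}(\R)$ (the regularity $H^{1+\alpha}$ being forced by the term $\partial_x D^\alpha\phi$), with equilibrium $V_\ast=(Q_c,0)$. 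Since the equation contains only $\phi_{yy}$, the system is reversible under $y\mapsto -y$ together with $(\phi,\partial_x^{-1}\phi_y)\mapsto(\phi,-\partial_x^{-1}\phi_y)$, and I would build the even-in-$x$ symmetry into the phase space from the outset.

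The decisive step is the spectral analysis of the linearization $\mathcal A$ at $V_\ast$. A purely imaginary eigenvalue $\mu=ik$ corresponds to a solution $\propto e^{iky}$ of the linearized profile equation, which reduces to $-\partial_x^2 M P=-k^2 P$, where $M:=D^\alpha-Q_c+c$. Because $-\partial_x^2 M=\partial_x L\,\partial_x^{-1}$ is conjugate to the operator $L$ of \eqref{eq:LL}, it shares the spectrum of $L$: by the spectral hypothesis recalled above, the only admissible value is $-k^2=\lambda$, the unique simple negative eigenvalue, yielding the pair $\mu=\pm i\omega_0$ with $\omega_0=\sqrt{|\lambda|}$ and eigenfunction $P=\partial_x\chi$, where $\chi$ is the eigenfunction of $L$ for $\lambda$. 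I would then check parities: from the identity $\langle L u,u\rangle=\langle M u',u'\rangle$ and the fact that the negative direction of $M$ is its even ground state, the minimizer $\chi$ has even derivative, so $\chi$ is odd and $P=\partial_x\chi$ is even. Hence $P(x)e^{\pm i\omega_0 y}$ survives the even-in-$x$ restriction while the translation mode $Q_c'$ (odd, and satisfying $MQ_c'=0$, so a neutral $\mu=0$ mode) is removed. This confirms that on the even-in-$x$ phase space the central spectrum of $\mathcal A$ reduces to the simple pair $\pm i\omega_0$, and that $\pm in\omega_0$ for $|n|\ge 2$ is not an eigenvalue (as $n^2\lambda$ is not an eigenvalue of $L$), so the nonresonance condition holds.

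With this spectral picture I would apply a center-manifold reduction in the spirit of Haragus--Kirchg\"assner \cite{haragus-kirchgassner}: after verifying the requisite resolvent estimates for $\mathcal A$ and the smoothness of the quadratic nonlinearity $\mathcal N$ on the chosen spaces, the dynamics near $V_\ast$ reduces to a two-dimensional reversible reduced system on the center subspace spanned by the $\pm i\omega_0$ eigenmodes. The reversible Lyapunov center theorem then produces a one-parameter family, parametrized by the amplitude $s$ of the critical mode, of symmetric (even-in-$y$) periodic orbits with frequency $\omega(s)\to\omega_0=\sqrt{|\lambda|}$ as $s\to 0$; translating back via $\phi(s)=Q_c+v(s)$ and unrescaling the period gives the asserted $C^1$ branch $\{(\phi(s),\omega(s))\}\subset C_{\mathrm b}(\R;H^{1+\alpha}(\R))\times(0,\infty)$, even in $x$ and $2\pi/\omega(s)$-periodic in $y$.

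I expect the main obstacle to lie in the spectral and Fredholm analysis on the unbounded line $\R_x$, rather than in the bifurcation-theoretic endgame. The operator $\mathcal A$ has essential spectrum accumulating at the origin---the $n=0$ (transverse-mean) sector sees $-\partial_x^2 M$, whose continuous spectrum reaches $0$ as $\xi\to 0$---so there is no spectral gap at $\mu=0$ and the standard center-manifold machinery does not apply verbatim. Overcoming this requires exploiting the zero-$x$-mass constraint encoded in the energy space $X_{\frac{\alpha}{2}}(\R^2)$ together with the even-in-$x$ restriction, so that the relevant operators become Fredholm of index zero and the central/hyperbolic splitting is well defined; establishing the accompanying resolvent bounds for the nonlocal symbol $|\xi|^\alpha$ uniformly along the imaginary axis is the technically delicate part of the argument.
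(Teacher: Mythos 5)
Your setup and spectral bookkeeping essentially match the paper's: reformulating the steady equation \eqref{eq:steady_KPI} as a reversible evolution in $y$ about the line wave, identifying the purely imaginary point spectrum of the linearization as the simple pair $\pm\mathrm{i}\omega_0$ with $\omega_0=\sqrt{|\lambda|}$ generated by the sole negative eigenvalue of $L$ in \eqref{eq:LL}, checking the nonresonance $\mathrm{i}n\omega_0\in\rho(\mathcal{L})$ for $|n|\geq 2$, and using a parity restriction to kill the translation mode $Q_c'$ --- this is the content of Lemma \ref{lem:spectrum_L} and Proposition \ref{spectrum_of_L} (the paper works with the potential $w$, $v=w_x$, odd in $x$, which is equivalent to your even-in-$x$ restriction at the level of $\phi$). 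You also correctly flag that the essential spectrum touching the imaginary axis at $\mu=0$ is the crux.

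The gap is in how you propose to get past that crux. Since $\mu\in\rho(\mathcal{L})$ if and only if $\mu^2\in\rho(L)$ and $\sigma_{ess}(L)=[0,\infty)$, the essential spectrum of the spatial-dynamics operator is the \emph{entire real line}, meeting $\mathrm{i}\R$ at the origin. This spectrum is produced by the constant-coefficient behavior of $M_c$ as $x\to\pm\infty$ (the symbol $\xi^2(|\xi|^\alpha+c)$ vanishes as $\xi\to 0$); the corresponding Weyl sequences live near spatial infinity and can be chosen odd or even and with zero $x$-mass, so neither the parity restriction nor the zero-mass constraint of $X_{\frac{\alpha}{2}}(\R^2)$ makes the relevant operators Fredholm at $0$ or opens a gap around the imaginary axis. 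Consequently there is no finite-dimensional center subspace with hyperbolic complement, and the center-manifold reduction followed by the reversible Lyapunov center theorem --- your bifurcation endgame --- cannot be repaired by the Fredholm bookkeeping you suggest. The paper's resolution is different in kind: it applies the Lyapunov--Iooss theorem (Theorem \ref{lyap}, following \cite{iooss} and \cite{bagri}), which replaces invertibility of $\mathcal{L}$ by solvability of $\mathcal{L}W=-\mathcal{N}(W^*)$ on the \emph{range of the nonlinearity} only (hypothesis (iv)), together with the resolvent decay $\|(\mathcal{L}-\mathrm{i}n\omega_0 I)^{-1}\|=o(1)$ of Proposition \ref{resolvent-est}. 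This is precisely where the potential formulation earns its keep: with $w$ odd, the solvability condition reduces to $LW_2=-\frac{1}{2}\left(({W_2^*}_x)^2\right)_x$ with odd right-hand side, and $L$ is injective on $H^{2+\alpha}_{\text{odd}}(\R)$ with bounded inverse (Lemmas \ref{kerL} and \ref{L-inverse}) because the kernel direction $Q_c$ is even --- no spectral gap is ever needed. Without this idea, or an equivalent substitute for the center-manifold step, your argument does not go through as proposed.
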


In order to prove the above theorem we first reformulate the fKP-I equation as a dynamical system
\begin{equation}\label{dynsysintro}
	W_y=\mathcal{L}W+\mathcal{N}(W),
\end{equation}
where the periodic variable $y$ plays the role of time (see \eqref{dynsys1}.).
If $\mathcal{L}$ has non-resonant eigenvalues $\pm\mathrm{i}\omega_0$, the existence of periodic solutions of \eqref{dynsysintro} with frequency close to $\omega_0$ can be shown by using the classical Lyapunov-center theorem. However, in the situation at hand the essential spectrum of $\mathcal{L}$ includes zero, 
which violates the non-resonance condition. In \cite{iooss} it was observed that in the proof of the Lyapunov-center theorem the condition that $\mathcal{L}$ is invertible can be relaxed to require $\mathcal{L}$ to be invertible only on the range of $\mathcal{N}$. This leads to the so called Lyapunov-Iooss theorem (see Theorem \ref{lyap}), which we apply to prove the Theorem \ref{thm:main1}.

\medspace

\emph{Related reults:}
There are several results on existence of periodically modulated solitary waves for the water-wave problem and related model equations. Tajiri and Murakami \cite{tm} found explicit periodically modulated solitary wave solutions of the KP-I equation. A family of solutions for a generalized KP-I equation were found by Haragus and Pego in \cite{haraguspego}. Groves, Haragus, and Sun proved in \cite{ghs} existence of periodically modulated gravity-capillary solitary waves emerging from a KdV-type solitary wave in a dimension breaking bifurcation, for strong surface tension. Later on, Groves, Sun, and Wahlén proved in \cite{gsw1} the existence of periodically modulated gravity-capillary solitary waves  emerging from a NLS-type solitary wave,
for weak surface tension. In addition they showed in \cite{gsw1} that the Davey--Stewartson equation possesses the same type of solutions. In \cite{mw} Milewski and Wang computed numerically periodically modulated gravity-capillary solitary waves in infinite depth, with an NLS-type solitary wave profile.

\paragraph*{Transverse (in)stability of line solitary wave solutions}
Transverse stability of a two-dimensional traveling wave means stability with respect to perturbations which depend not only on the direction of propagation, but also on its transverse direction. This is unlike the one-dimensional stability, which relates to perturbations depending solely on the direction of propagation.
We investigate the transverse instability of line solitary wave solutions for the fKP-I equation analytically and support those results by numerical experiments.
 For the definition of linear transverse instability we refer to the discussion in Section \ref{S:2} and Definition \ref{def}.
 Our result reads:

\begin{theorem}[Linear transverse instability for the fKP-I equation]\label{thm:main2}
	Let $\frac{1}{3}<\alpha< 2$ and $u(t,x)=Q_c(x-ct)$ be a ground state solitary solution of the fKdV equation with wave speed $c>0$.
	Then $Q_c$ is linearly unstable under the evolution of the fKP-I equation with respect to transverse perturbations, which are localized in $x$-direction and bounded in $y$-direction.
	
\end{theorem}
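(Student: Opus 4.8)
The plan is to prove transverse linear instability by exhibiting an unstable eigenvalue of the linearized fKP-I evolution operator for perturbations of the form $\mathrm{e}^{\lambda t}\mathrm{e}^{\mathrm{i}k y}v(x)$, where $k\in\R$ is the transverse wave number and $\re\lambda>0$. Substituting the ansatz $u(t,x,y)=Q_c(x)+\mathrm{e}^{\lambda t}\mathrm{e}^{\mathrm{i}ky}v(x)$ into the fKP-I equation and linearizing about the line solitary wave $Q_c$ reduces the problem to a one-dimensional spectral problem parametrized by $k$. Concretely, writing the fKP-I equation in the moving frame and linearizing, I would obtain an eigenvalue equation of the schematic form
\begin{equation*}
	\lambda\, \partial_x v = \partial_x^2\bigl(D^\alpha v - Q_c v + c v\bigr) + k^2 v,
\end{equation*}
or equivalently, after inverting $\partial_x$ and recalling the definition \eqref{eq:LL} of $L$, a relation of the form $\lambda\,v = -\partial_x L v/(\dots) + k^2 \partial_x^{-1} v$; the precise normalization is routine but one must be careful about the $\partial_x^{-1}$ operator and the zero-mass constraint built into the energy space $X_{\alpha/2}$.

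The central analytic input is the spectral information on $L=-\partial_x(D^\alpha - Q_c + c)\partial_x$ asserted in the excerpt: $L$ is self-adjoint with a single simple negative eigenvalue $\lambda<0$ and the remainder of its spectrum contained in $[0,\infty)$. The natural strategy is then an Evans-function or, more elementarily, a variational/operator-pencil argument in the spirit of the standard transverse-instability results for KP-I (Rousset--Tzvetkov, Alexander--Pego--Sachs). First I would recast the eigenvalue problem as a generalized eigenvalue (operator pencil) problem in $\lambda$ and $k^2$, isolating the quantity $k^2$ as a function of the admissible growth rate. Then I would study the scalar function obtained by projecting onto the relevant subspace and show, using the single negative direction of $L$ together with a monotonicity/continuity argument in $k$, that there exists a positive range of $k^2$ for which a real positive eigenvalue $\lambda=\lambda(k)>0$ exists. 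The key structural fact making this work is that the single negative eigenvalue of $L$ supplies exactly one unstable direction that survives the transverse coupling $+k^2$, while the nonnegativity of the rest of the spectrum prevents spurious obstructions; a standard limiting argument as $k\to 0$ and a sign/count of the relevant Morse index pins down existence.

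The main obstacle I anticipate is twofold. First, the nonlocal operator $D^\alpha$ for general $\tfrac{1}{3}<\alpha<2$ (as opposed to the local $\alpha=2$ KP-I case) means the classical ODE/Evans-function machinery does not apply directly, so I would instead work with the self-adjoint operator $L$ abstractly and rely on min-max characterizations and a careful functional-analytic framework on the energy space, handling the $\partial_x^{-1}$ nonlocality and the zero-mass constraint with care. Second, one must correctly formulate the eigenvalue problem so that the relevant operator is self-adjoint (or can be symmetrized) to legitimize the spectral counting; the appearance of $\partial_x$ and $\partial_x^{-1}$ on opposite sides tends to break symmetry, and the standard fix is to conjugate by $\partial_x^{1/2}$ or to phrase everything through the quadratic form associated with $L$. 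Once the problem is symmetrized, the existence of an unstable real eigenvalue follows from the single negative eigenvalue of $L$ via a continuity-and-monotonicity argument in $k$, and it remains only to verify the hypotheses of the adopted instability criterion (e.g. that the purported eigenfunction $v$ and its image under $\partial_x^{-1}$ lie in the correct spaces, ensuring localization in $x$ and boundedness in $y$ as claimed in the statement).
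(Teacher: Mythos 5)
Your proposal follows essentially the same route as the paper: Fourier-transform in the transverse variable (via the normal-mode ansatz), reduce to the $k$-parametrized generalized eigenvalue problem $\lambda(-\partial_x)w=L(k)w$ with $L(k)=-\partial_x(D^\alpha+c-Q_c)\partial_x+k^2$, and conclude by the Rousset--Tzvetkov instability criterion powered by the spectral lemma on $L$ (one simple negative eigenvalue, remainder of the spectrum in $[0,\infty)$), which is exactly how the paper verifies hypotheses (A0)--(A4). The only cosmetic differences are that the paper symmetrizes by the substitution $\psi=\mathrm{e}^{\lambda t}\partial_x w$ rather than conjugating by $\partial_x^{1/2}$, obtains the large-$k$ positivity $L(k)\geq \beta I$ from a Gagliardo--Nirenberg-type interpolation (needed since $c-Q_c$ changes sign), and locates the instability at sufficiently large transverse frequencies near $k_*=\sqrt{|\lambda|}$ where $\ker L(k_*)$ becomes nontrivial, rather than through a limit $k\to 0$.
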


The proof is based on criteria formulated by Rousset and Tzvetkov in \cite{rousset}. We would like to remark that the instability is due to sufficiently large transverse frequencies. In fact the frequency must be large enough to guarantee that the kernel of a respective linear operator is simple. This operator is a bounded perturbation of $L$ in \eqref{eq:LL} shifting the sole negative eigenvalue of $L$ to the origin.
 In addition to the analytical result, we provide a numerical analysis for the instability of line solitary waves for the fKP-I equation, as well as numerical evidence for stability under the fKP-II flow.
To this end, we first generate the line solitary wave solutions numerically by Petviashvili iteration method. Then we use a Fourier spectral method combined with an exponential time differencing scheme to investigate the time evolution of the line solitary wave solutions with respect to a given perturbation.

\medspace

\emph{Related reults:}
In 1970,  Kadomtsev  and Petviashvili introduced the classical KP equations
\begin{equation*}\label{kp}
	(u_t+uu_x+u_{xxx})_x+\sigma u_{yy}=0,
\end{equation*}
to  investigate the stability of solitary wave solutions of the KdV equation with respect to transverse effects.
The two versions of the KP equation, KP-I and KP-II, admit quite different transverse dynamics.
The earliest result goes back to Zakharov \cite{zakharov} in 1973, were it is proved that the solitary wave solution of the KdV equation is nonlinearly unstable under the evolution of the KP-I flow. An alternative proof is given by Rousset and Tzvetkov \cite{rousset2} in 2009, and linear instability has been shown by Alexander, Pego, and Sachs \cite{alexander} in 1997. The stability dynamics for the KP-II flow appear to be different. It was shown by Mizumachi and Tzvetkov \cite{mizumachi2} and by Mizumachi \cite{mizumachi1} that the line solitary wave solutions are stable under periodic as well as localized transverse perturbations.

In the context of periodic line traveling waves, a spectral instability criterion for a generalized KP equation is established in \cite{johnson}.  Furthermore, in \cite{haragus2} spectral (in)stability of small amplitude periodic line traveling waves for the KP equation is investigated, obtaining a transverse instability result for KP-I and a stability result for the KP-II equation. In \cite{haraguspelinovsky} a general \emph{counting unstable eigenvalues} result is presentet for Hamiltonian systems. As an application, the authors of \cite{haraguspelinovsky} prove that periodic line traveling waves are transversly spectrally stable under the KP-II flow with respect to general bounded perturbations and that they are transversly linearly stable with respect to double periodic perturbations. Transverse instability of periodic and generalized line solitary wave solutions is shown in \cite{haragus} in the context of a fifth-order KP model. The term generalized solitary wave solution refers to solutions, which decay exponentially to periodic waves at infinity.

We also would like to mention that the solitary wave solutions for the fKdV equation, which are known to exist for $\alpha>\frac{1}{3}$, are orbitally stable as a one-dimensional solution when $\alpha>\frac{1}{2}$ and spectrally unstable for $\frac{1}{3}<\alpha<\frac{1}{2}$ \cite{albert1,linares2, pava} . The stability of solitary waves is an open problem for the critical case $\alpha=\frac{1}{2}$.

\medskip

\subsection{Organization of the paper}
Section \ref{S:1} is devoted to the existence of traveling wave solutions for the fKP-I equation. We first review some results on the line solitary wave solutions of the fKP equations. Then, by applying a generalized Lyapunov center theorem we prove Theorem \ref{thm:main1} on the
periodically modulated solitary wave solutions.
In Section \ref{S:2} we show that the line solitary wave solutions of the fKP-I equation are linearly unstable with respect to localized transverse perturbations.  Eventually, a numerical study on the (in)stability dynamics of  the line solitary wave solutions
under KP-I as well as the KP-II flows is presented in Section \ref{S:3}.

\bigskip

\section{Traveling waves for the fractional KP-I equation}\label{S:1}

We prove the existence of periodically modulated soliary solution for the fKP-I equation. The proof is based on methods from dynamical systems and often refered to \emph{dimension breaking} bifrucation.

\subsection{Line solitary wave solutions}

The line traveling wave solutions of the fKP equations are the solitary wave solutions of the fKdV equation. If $u(x,t)=Q_c(x-ct)$ is a solitary wave solution  of the fKdV equation with wave speed $c>0$, then $Q_c$
satisfies  the ordinary differential equation
\begin{equation}\label{fkpODE}
	D^{\alpha} Q_c+cQ_c-\tfrac{1}{2}Q_c^{2}=0
\end{equation}
after integration. Notice that the integration constant, say $B\in \R$, can always set to be zero due the Galilean invariance of the fKdV equation
	\[
	\phi \mapsto \phi + \gamma,\qquad c \mapsto c + \gamma,\qquad B\mapsto B-\gamma\left(c+\tfrac{1}{2}\gamma\right).
	\]
By the scaling $Q_c(z)=2cQ(c^{1/\alpha} z)$ equation \eqref{fkpODE} becomes
\begin{equation}\label{ODE}
	D^{\alpha} Q+Q-Q^2=0.
\end{equation}

\begin{proposition}[Existence of solitary waves for fKdV]\label{existence-kdv}
	If $\alpha>\frac{1}{3}$, then there exists a solution $Q\in H^{\frac{\alpha}{2}}(\R)$ of equation \eqref{ODE}.
\end{proposition}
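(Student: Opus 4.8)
The plan is to obtain $Q$ as a minimizer of a suitable constrained variational problem and then verify that any such minimizer solves \eqref{ODE} after rescaling. Concretely, I would work in the Sobolev space $H^{\frac{\alpha}{2}}(\R)$ and seek to minimize the functional
\[
\mathcal{E}(u):=\frac12\int_\R\left((D^{\frac{\alpha}{2}}u)^2+u^2\right)\,dx = \frac12\norm{u}_{H^{\frac{\alpha}{2}}}^2
\]
subject to the constraint that the cubic term $\int_\R u^3\,dx$ is fixed at a positive value, say
\[
\mathcal{C}(u):=\frac13\int_\R u^3\,dx = \mu>0.
\]
The Euler--Lagrange equation for a constrained minimizer $u$ reads $D^\alpha u+u=\theta\,u^2$ for a Lagrange multiplier $\theta$, and since the problem is invariant under the scaling $u\mapsto \theta^{-1}u$ together with an adjustment of the constraint level, one can normalize $\theta=1$, recovering exactly \eqref{ODE}. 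The solitary-wave existence range $\alpha>\frac13$ enters precisely through the embedding and interpolation estimates needed to control the cubic term.

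First I would check that the constraint set $\{u\in H^{\frac{\alpha}{2}}(\R):\mathcal{C}(u)=\mu\}$ is nonempty and that $\mathcal{E}$ is bounded below on it, which requires estimating $\int u^3$. By the Gagliardo--Nirenberg inequality one bounds $\norm{u}_{L^3}^3\lesssim \norm{D^{\frac{\alpha}{2}}u}_{L^2}^{\theta}\norm{u}_{L^2}^{3-\theta}$ for an appropriate exponent $\theta=\theta(\alpha)$; the condition $\alpha>\frac13$ guarantees that the relevant Sobolev exponent is subcritical so that this interpolation holds with $\theta<2$, keeping the cubic term strictly weaker than the quadratic energy and thereby ruling out collapse of minimizing sequences. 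Next I would take a minimizing sequence $(u_n)$ and show it is bounded in $H^{\frac{\alpha}{2}}$, so that after passing to a subsequence it converges weakly to some $Q$.

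The main obstacle is the usual loss of compactness on the whole line $\R$: weak convergence alone does not force $\mathcal{C}(u_n)\to\mathcal{C}(Q)$, because mass can escape to spatial infinity or spread out. I would resolve this with the concentration-compactness principle of Lions, applied to the sequence of densities associated with the minimizing sequence. One rules out the \emph{vanishing} scenario by showing that if the sequence spread out then $\int u_n^3\to 0$, contradicting the fixed positive constraint level, and one rules out \emph{dichotomy} by a strict subadditivity argument for the infimum as a function of the constraint level $\mu$, which follows from the scaling structure of $\mathcal{E}$ and $\mathcal{C}$. This leaves \emph{compactness} up to translations: after translating the $u_n$ one recovers strong convergence in $L^3$, and hence the weak limit $Q$ is an admissible minimizer.

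Finally I would confirm that the minimizer is genuinely a solution of \eqref{ODE}. The minimizer satisfies the Euler--Lagrange equation $D^\alpha Q+Q=\theta Q^2$ with multiplier $\theta$; testing against $Q$ and using the constraint shows $\theta>0$, whereupon the scaling $Q\mapsto \theta^{-1}Q$ normalizes the multiplier to one and yields \eqref{ODE}. Regularity and the membership $Q\in H^{\frac{\alpha}{2}}(\R)$ come for free from the variational construction, while additional smoothness (if desired) follows by bootstrapping the equation. I note that this is exactly the ground-state construction referenced via \cite{arnesen,weinstein} in the excerpt, so one could alternatively simply invoke those works; the above sketch indicates how the result is established from first principles.
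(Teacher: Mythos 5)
The paper does not actually prove this proposition: it is quoted from the literature, with Weinstein cited for $\alpha\ge 1$ and Arnesen for the full range $\alpha>\frac13$. Your sketch reconstructs the standard variational argument behind those citations --- constrained minimization of the quadratic $H^{\frac{\alpha}{2}}$-energy at fixed cubic constraint, Lions concentration--compactness with vanishing excluded by the fixed constraint level and dichotomy excluded by strict subadditivity from homogeneity ($I_\mu=\mu^{2/3}I_1$), then normalization of the Lagrange multiplier --- so your route is legitimate and is essentially Weinstein's quotient minimization in a different parametrization, whereas the paper simply invokes the references. What your formulation buys, and what makes it the right one for the whole range $\alpha>\frac13$, is that the functional being minimized is a squared norm, so boundedness below and boundedness of minimizing sequences are automatic; the condition $\alpha>\frac13$ enters \emph{only} through the strict subcriticality of $L^3$ for $H^{\frac{\alpha}{2}}(\R)$ (the critical exponent is $\frac{2}{1-\alpha}>3$ exactly when $\alpha>\frac13$), which is what kills the vanishing scenario.

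Two slips deserve correction, though neither breaks the argument. First, your justification ``$\alpha>\frac13$ guarantees $\theta<2$, keeping the cubic term strictly weaker than the quadratic energy'' is wrong: the Gagliardo--Nirenberg exponent is $\theta=\frac{1}{\alpha}$, so $\theta<2$ holds only for $\alpha>\frac12$, and for $\frac13<\alpha\le\frac12$ one has $\theta\in[2,3)$. The condition $\theta<2$ is the $L^2$-subcriticality condition relevant to minimizing the \emph{energy} $\frac12\lVert D^{\frac{\alpha}{2}}u\rVert_{L^2}^2-\frac16\int u^3$ at fixed mass, which genuinely fails below $\alpha=\frac12$; your chosen constraint avoids this entirely, and you should say so rather than import the wrong condition. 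Second, the normalization of the multiplier goes the other way: if $D^\alpha Q+Q=\theta Q^2$ with $\theta>0$, then it is $\theta Q$, not $\theta^{-1}Q$, that solves \eqref{ODE}. A minor technical point you should also note: since $\int u^3$ is sign-indefinite, the dichotomy step must handle splittings with constraint values of either sign, e.g.\ via $I_{-\mu}=I_\mu$ (from $u\mapsto -u$) together with $|\mu_1|^{2/3}+|\mu_2|^{2/3}>\mu^{2/3}$ for $\mu_1+\mu_2=\mu$, $\mu_1\mu_2\neq 0$.
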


Weinstein \cite{weinstein} proved the above statement for $\alpha \geq 1$. Recently, Arnesen \cite{arnesen} extended the existence result for solitary waves in a much more general framework including the fKdV equation for $\alpha>\frac{1}{3}$.

If $Q$ is a nontrivial solution of \eqref{ODE}, then it is a critical point of the Weinstein functional
\begin{equation*}
	J^{\alpha}(u)=\left(\int_{\mathbb{R}} |u|^{3} dx  \right)^{-1}
	\left(\int_{\mathbb{R}} |D^{\alpha/2}u|^{2}  dx  \right)^{\frac{1}{2\alpha}}
	\left(\int_{\mathbb{R}}  |u|^{2} dx  \right)^{\frac{\alpha-1}{2\alpha+1}}.
\end{equation*}
A nontrivial critical point $Q$ of the Weinstein functional $J^\alpha$ that is in addition even, positive and solves the minimization problem
\begin{equation*}
	J^{\alpha}(Q)=\inf \{J^{\alpha}(u)|u\in H^{\alpha/2}(\mathbb{R}) \backslash\{0\}\}
\end{equation*}
is called a \emph{ground state} for \eqref{ODE}.
The following result is from Frank and Lenzmann \cite{franklenzmann}:

\begin{theorem}[Uniqueness and structural  properties of ground states for the fKdV equation]\label{prop_Q}
	Let $\alpha\in (\tfrac{1}{3},2)$. Then there exists a unique ground state solution $Q\in H^{\frac{\alpha}{2}}(\R)$  of \eqref{ODE}. Moreover, the following holds true:
	\begin{itemize}
		\item[i)] $Q$ belongs to the class $H^{1+\alpha}(\R)\cap C^{\infty}(\R)$. Furthermore, it is symmetric and has exactly one crest.
		\item[ii)] $Q$ is and strictly decreasing from its crest and it satisfies the following decay estimate:
		\[
		|Q(x)| \lesssim \frac{1}{1+|x|^{\alpha+1}}.
		\]
	\end{itemize}
\end{theorem}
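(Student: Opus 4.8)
The plan is to take existence as already granted by Proposition~\ref{existence-kdv} and to establish, in turn, the regularity and qualitative structure of a ground state $Q$ and then its uniqueness, the last being by far the deepest point. Throughout I would work with the fixed-point reformulation of \eqref{ODE}, namely
\[
Q=(D^\alpha+1)^{-1}(Q^2),
\]
exploiting that the Fourier multiplier $(1+|\xi|^\alpha)^{-1}$ is smoothing of order $\alpha$ and positivity preserving, as this is the engine behind both the bootstrap and the structural claims.

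\emph{Regularity (part i).} Starting from $Q\in H^{\alpha/2}$, I would run a bootstrap on the identity above. Sobolev embedding places $Q$ in a Lebesgue space $L^p$ with $p$ depending on $\alpha$, so $Q^2$ lies in some $L^q$; applying the order-$\alpha$ smoothing of $(D^\alpha+1)^{-1}$ and, once the regularity index exceeds $\tfrac12$, the algebra property of $H^s$ together with a Kato--Ponce product estimate, one gains $\alpha$ derivatives at each step. Iterating yields first $Q\in H^{1+\alpha}$ and then $Q\in H^s$ for every $s$, whence $Q\in C^\infty$ by Sobolev embedding.

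\emph{Symmetry, positivity, monotonicity, and decay (parts i--ii).} Because $Q$ minimizes the Weinstein functional, I would replace it by its symmetric decreasing rearrangement $Q^*$: the fractional P\'olya--Szeg\H{o} inequality gives $\|D^{\alpha/2}Q^*\|_{L^2}\le\|D^{\alpha/2}Q\|_{L^2}$ while the $L^2$ and $L^3$ norms are preserved, so $Q^*$ is again a minimizer and one may take $Q=Q^*$ even and nonincreasing away from the origin. Positivity $Q>0$ follows since $|Q|$ is also a minimizer and the positivity-preserving resolvent forces strict positivity; strict monotonicity and the single crest then come from the strict form of the rearrangement inequality. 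For the decay I would use that the kernel $G=\mathcal{F}^{-1}\!\left[(1+|\xi|^\alpha)^{-1}\right]$ satisfies $G(x)\sim |x|^{-(1+\alpha)}$ as $|x|\to\infty$; writing $Q=G*Q^2$ and feeding in the integrability and faster decay of $Q^2$ through a convolution estimate gives $|Q(x)|\lesssim (1+|x|)^{-(1+\alpha)}$.

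\emph{Uniqueness (the main obstacle).} This is the technical heart, following Frank--Lenzmann, and the nonlocality of $D^\alpha$ rules out ODE/shooting arguments. The strategy is: (1) prove nondegeneracy of the linearized operator $L_+:=D^\alpha+1-2Q$, that is $\ker L_+=\mathrm{span}\{Q'\}$ in $L^2$ (note $L_+Q'=0$ follows by differentiating \eqref{ODE}); (2) use nondegeneracy to parametrize solutions locally via the implicit function theorem and to propagate uniqueness by a continuation argument in the parameter $\alpha$, anchored at the local case $\alpha=2$, where the unique even positive solution $Q(x)=\tfrac32\,\mathrm{sech}^2(x/2)$ is classical. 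Step (1) is where essentially all the difficulty lies: I would pass to the Caffarelli--Silvestre extension, turning $D^\alpha$ into a local degenerate-elliptic Dirichlet-to-Neumann operator on a half-space, and combine a Perron--Frobenius argument for the lowest eigenvalue of $L_+$ with a delicate analysis of the extended kernel to exclude any element of $\ker L_+$ beyond the translation mode. Establishing this nondegeneracy, and securing the real-analytic dependence on $\alpha$ needed to make the continuation rigorous, is the hard part; once it is in place, uniqueness follows by a standard global continuation argument.
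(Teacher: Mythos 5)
The paper offers no proof of this theorem: it is imported verbatim from Frank and Lenzmann \cite{franklenzmann}, so there is no internal argument to compare against. Your sketch is a faithful outline of exactly that cited proof---bootstrap regularity via $Q=(D^\alpha+1)^{-1}(Q^2)$, symmetry and monotonicity by rearrangement of Weinstein minimizers, decay from the $|x|^{-(1+\alpha)}$ kernel asymptotics, and uniqueness via nondegeneracy of $L_+=D^\alpha+1-2Q$ (through the Caffarelli--Silvestre extension) combined with continuation in $\alpha$ anchored at the explicit $\mathrm{sech}^2$ solution at $\alpha=2$---so it takes the same route as the source the paper relies on.
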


In view of Theorem \ref{prop_Q} the scaling $Q_c(z)=2cQ(c^{1/\alpha} z)$ yields that the equation \eqref{fkpODE} also has a unique  ground state solution $Q_c$ that is smooth, symmetric and has exactly one crest.

\medskip

\subsection{Existence of periodically modulated solitary wave solutions}

In this subsection we prove Theorem \ref{thm:main1} on the existence of partially localized two-dimensional traveling waves, which are periodically modulated solitary waves.
We consider the fKP-I equation
\begin{equation*}\label{fkp1}
	(u_t+uu_x-D^\alpha u_x)_x-u_{yy}=0.
\end{equation*}
The travelling wave ansatz $u(t,x,y)=\phi(x-ct,y)$ yields the following steady version of the fKP-I equation
\begin{equation}\label{tfkp1}
	\left(-c\phi+\tfrac{1}{2}\phi^2-D^\alpha \phi\right)_{xx}-\phi_{yy}=0.
\end{equation}
According to Theorem \ref{prop_Q}, for  $\tfrac{1}{3}<\alpha<2$, equation \eqref{tfkp1} possesses a unique line solitary wave solution $Q_c$, where $Q_c$ is a solitary wave solution of the fKdV equation.
We are seeking solutions of \eqref{tfkp1} of the form $\phi(x,y)=Q_c(x)+v(x,y)$, where $v$ is periodic in $y$ and localized in $x$. Inserting this ansatz in \eqref{tfkp1} gives the following equation for $v$:
\begin{equation}\label{v_eq_aux}
	\left(-cv+\tfrac{1}{2}(v^2+2vQ_c)-D^\alpha v\right)_{xx}-v_{yy}=0.
\end{equation}
If we let $v=w_x$ and integrate with respect to $x$, equation \eqref{v_eq_aux} becomes
\begin{equation*}
	\left(-cw_x+\frac{1}{2}w_x(w_x+2Q_c)-D^\alpha w_x\right)_x-w_{yy}=0,
\end{equation*}
which can be written as
\begin{equation}\label{v_eq_main}
	Lw+\tfrac{1}{2}\left(w_x^2\right)_x-w_{yy}=0,
\end{equation}
where
\begin{equation}\label{eq:L}
	L:=-\partial_xM_c\partial_x, \qquad  	M_c:=D^\alpha-Q_c+c.
\end{equation}
Note that $M_c$ is the linearization of \eqref{fkpODE} around $Q_c$.
In order to prove the existence of periodically modulated solitary wave solutions for the fKP-I equation, we view the stationary problem \eqref{v_eq_main} as an evolution equation, where the periodic spatial direction $y$ takes the role of the time variable.
We rewrite equation \eqref{v_eq_main} as a dynamical system of the form
\begin{equation}\label{dynsys1}
	W_y=\mathcal{L}W+\mathcal{N}(W),
\end{equation}
where $W=(W_1,W_2):=(w_y,w)$,
\begin{equation*}
	\mathcal{L}=\left(\begin{array}{cc}
		0 & L\\
		1 & 0
	\end{array}\right),\qquad  \mathcal{N}(W)=\left(\begin{array}{c}
		\frac{1}{2}(({W_{2}}_x)^2)_x\\
		0
	\end{array}\right).
\end{equation*}

Let $X=L_{{odd}}^2(\mathbb{R})\times H_{odd}^{1+\frac{\alpha}{2}}(\mathbb{R})$, $Z=H_{odd}^{1+\frac{\alpha}{2}}(\mathbb{R})\times H_{{odd}}^{2+\alpha}(\mathbb{R})$, where the subscript odd denotes restriction to odd functions in the respective function spaces, with norms
\begin{align*}
	\norm{W}_X=\norm{W_1}_{L^2}+\norm{W_2}_{H^{1+\frac{\alpha}{2}}},\qquad
	\norm{W}_Z=\norm{W_1}_{H^{1+\frac{\alpha}{2}}}+\norm{W_2}_{H^{2+\alpha}}.
\end{align*}
Then $\mathcal{L}$ is an unbounded, densely defined and closed operator on $X$ with domain $Z\subset X$. In order to prove the existence of small-amplitude periodic solutions of \eqref{dynsys1} we use the following generalization of the reversible Lyapunov centre theorem \cite[Theorem 1.1]{bagri}, which we refer to as the Lyapunov--Iooss theorem.

\begin{theorem}[Lyapunov-Iooss]\label{lyap}
	Consider the differential equation
	\begin{equation}\label{lyapunov-iooss-eq}
		\dot{W}=\mathcal{L}W+\mathcal{N}(W),
	\end{equation}
	in which $W$ belongs to a Banach space $X$ and suppose that
	\begin{itemize}
		\item[(i)] $\mathcal{L}\colon {D}(\mathcal{L})\subset {X}\rightarrow {X}$ is a densely defined, closed linear operator.
		\item[(ii)] There is an open neighborhood ${U}$ of the origin in ${D}(\mathcal{L})$ (regarded as a Banach space equipped with the graph norm) such that $\mathcal{N}\in C_{b,u}^3({U},{X})$ and $\mathcal{N}(0)=0,\ \mathrm{d}\mathcal{N}[0]=0$.
		\item[(iii)] Equation \eqref{lyapunov-iooss-eq} is reversible: there exists a bounded operator $\mathcal{S}$ on $X$ such that $\mathcal{S}\mathcal{L}W=-\mathcal{L}\mathcal{S}W$ and $\mathcal{S}\mathcal{N}(W)=-\mathcal{N}(\mathcal{S}W)$ for all $W\in {U}$.
		\item[(iv)] For each $W^*\in {U}$ the equation
		\begin{equation*}
			\mathcal{L}W=-\mathcal{N}(W^*)
		\end{equation*}
		has a unique solution $W\in {D}(\mathcal{L})$ and the mapping $W^*\mapsto W$ belongs to $C_{b,u}^3({U},{D}(\mathcal{L}))$.
	
	\end{itemize}
Suppose further that there exists $\omega_0\in \R$ such that
	\begin{itemize}
		\item[(v)]$\pm \mathrm{i}\omega_0$ are nonzero simple eigenvalues of $\mathcal{L}$.
		\item[(vi)] $\mathrm{i} n \omega_0\in \rho(\mathcal{L})$ for $n\in \mathbb{Z}\backslash \{-1,0,1\}$.
		\item[(vii)] $\norm{(\mathcal{L}-\mathrm{i}n\omega_0I)^{-1}}_{{X}\rightarrow {X}}=o(1)$ and $\norm{(\mathcal{L}-\mathrm{i}n\omega_0I)^{-1}}_{{X}\rightarrow {D}(\mathcal{L})}=O(1)$ as $|n|\rightarrow \infty$.
	\end{itemize}
	Under these hypotheses there exists an open neighborhood $I$ of the origin in $\mathbb{R}$ and a continuously differentiable branch $\{(W(s),\omega(s))\}_{s\in I}$ in $C_{\text{b}}^1(\mathbb{R},X)\cap C_{\text{b}}(\mathbb{R},{D}(\mathcal{L}))$ of solutions to \eqref{lyapunov-iooss-eq}, which are $\frac{2\pi}{\omega(s)}$-periodic, with $\omega(0)=\omega_0$.
\end{theorem}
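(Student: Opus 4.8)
The plan is to realize the periodic solutions through a Lyapunov--Schmidt reduction adapted to the fact that $\mathcal{L}$ fails to be boundedly invertible. Seeking a $\tfrac{2\pi}{\omega}$-periodic solution, I first normalize the period by setting $\tau=\omega y$ and $W(y)=V(\omega y)$, so that $V$ is $2\pi$-periodic and \eqref{lyapunov-iooss-eq} becomes $\omega V'=\mathcal{L}V+\mathcal{N}(V)$, with $\omega$ now an explicit bifurcation parameter near $\omega_0$. I would look for $V$ in the Banach space $\mathbb{X}$ of $2\pi$-periodic maps that are $C^1$ in $\tau$ with values in $X$ and continuous with values in $D(\mathcal{L})$, expanding in the Fourier series $V=\sum_{n\in\mathbb{Z}}V_n\mathrm{e}^{\mathrm{i}n\tau}$. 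The linear part is then the operator $T_\omega:=\omega\partial_\tau-\mathcal{L}$, which acts modewise as $V_n\mapsto(\mathrm{i}n\omega-\mathcal{L})V_n$.

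The spectral analysis of $T_{\omega_0}$ identifies the bifurcation kernel. By hypothesis (v), at $\omega=\omega_0$ the modes $n=\pm1$ produce a two-real-dimensional kernel $K=\mathrm{span}\{\mathrm{e}^{\mathrm{i}\tau}\zeta,\mathrm{e}^{-\mathrm{i}\tau}\bar\zeta\}$, where $\mathcal{L}\zeta=\mathrm{i}\omega_0\zeta$; by hypothesis (vi) the modes $|n|\ge2$ contribute nothing to the kernel, while the mean mode $n=0$ is governed by $\mathcal{L}$ itself. Let $P$ be the projection onto $K$ and $Q=I-P$, and split $V=v+w$ with $v=PV$, $w=QV$.

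The $Q$-projected equation $T_\omega w=Q\mathcal{N}(v+w)$ is to be solved for $w=w(v,\omega)$ by the implicit function theorem, and this is where the main obstacle lies: one must invert $T_\omega$ on $\mathrm{ran}\,Q$ uniformly for $\omega$ near $\omega_0$. The delicate point is the mean mode, where the relevant operator is $-\mathcal{L}$, which is \emph{not} invertible on all of $X$ since $0$ belongs to its essential spectrum. This is precisely where hypothesis (iv) — invertibility of $\mathcal{L}$ on the range of $\mathcal{N}$, with a $C_{b,u}^3$ solution map — is indispensable, as the zero-frequency component of the nonlinearity lands exactly in that range. For the high modes $|n|\to\infty$ the resolvent estimates (vii) give $\|(\mathrm{i}n\omega-\mathcal{L})^{-1}\|_{X\to X}=o(1)$ together with boundedness into $D(\mathcal{L})$, which furnishes a uniform inverse on the tail, while the finitely many intermediate modes $2\le|n|<N$ are invertible by (vi) and continuity in $\omega$. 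Because $\mathcal{N}\in C_{b,u}^3$ with $\mathcal{N}(0)=0$ and $\mathrm{d}\mathcal{N}[0]=0$ by (ii), the solution $w(v,\omega)$ exists, is $C^3$, and is quadratically small in $(v,\omega-\omega_0)$.

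It remains to solve the finite-dimensional $P$-projected equation on $K\cong\R^2$. The phase-shift action $\tau\mapsto\tau+\theta$ commutes with the full problem, so the reduced equation is $S^1$-equivariant and the phase can be frozen; equivalently, using the reversibility (iii) I would restrict to the subspace of $\mathcal{S}$-symmetric profiles, on which the reduced map collapses to a single scalar equation $g(s,\omega)=0$ vanishing identically on the trivial branch $s=0$. Factoring out this branch and invoking the simplicity of $\pm\mathrm{i}\omega_0$ to obtain transversality of the eigenvalue crossing in $\omega$, the implicit function theorem yields a unique $C^1$ branch $\omega=\omega(s)$ with $\omega(0)=\omega_0$. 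Undoing the rescaling $\tau=\omega(s)y$ returns the asserted continuously differentiable family of $\tfrac{2\pi}{\omega(s)}$-periodic solutions in $C_{\mathrm{b}}^1(\R,X)\cap C_{\mathrm{b}}(\R,D(\mathcal{L}))$. I expect the reconciliation of the mean-mode inversion via (iv) with the high-frequency resolvent decay (vii) into a single uniform bound for $T_\omega^{-1}$ on $\mathrm{ran}\,Q$ to be the technical heart of the argument.
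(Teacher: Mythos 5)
The paper does not actually prove this theorem: it is quoted verbatim from \cite[Theorem 1.1]{bagri}, whose proof in turn builds on Iooss's observation in \cite{iooss}, so the relevant baseline is the cited proof. Your Fourier/Lyapunov--Schmidt skeleton---normalizing the period via $\tau=\omega y$, splitting off the kernel generated by the $n=\pm1$ modes using (v), inverting finitely many intermediate modes by (vi) and the tail by the resolvent decay (vii), and closing the two-dimensional reduced equation with reversibility and simplicity---matches that proof in outline. But the step you yourself flag as ``the technical heart'' is precisely where your argument breaks down: you cannot solve the $Q$-equation $T_\omega w=Q\mathcal{N}(v+w)$ by the implicit function theorem as written, because on the zero Fourier mode $T_\omega$ acts as $-\mathcal{L}$, which has no bounded inverse (in the intended application $0$ lies in the essential spectrum of $\mathcal{L}$), and hypothesis (iv) does not supply one. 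Hypothesis (iv) is a \emph{nonlinear} solvability statement: it produces a solution only for right-hand sides of the exact form $-\mathcal{N}(W^*)$ with $W^*\in U$. The zero-mode component of $\mathcal{N}(V(\cdot))$ is the average $\tfrac{1}{2\pi}\int_0^{2\pi}\mathcal{N}(V(\tau))\,\mathrm{d}\tau$, and the range of the nonlinear map $\mathcal{N}$ is neither a linear subspace nor closed under such averaging; so your claim that ``the zero-frequency component of the nonlinearity lands exactly in that range'' is unjustified, and with it the existence, smoothness, and uniformity in $\omega$ of the reduction map $w(v,\omega)$.

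The missing idea---Iooss's trick, as implemented in \cite{bagri}---is to exploit (iv) \emph{before} any mode decomposition. Uniqueness in (iv) applied with $W^*=0$ already forces $\ker\mathcal{L}=\{0\}$. Define $\Phi\in C_{b,u}^3(U,D(\mathcal{L}))$ by $\mathcal{L}\Phi(W^*)=-\mathcal{N}(W^*)$, so that $\Phi(0)=0$ and $\mathrm{d}\Phi[0]=0$ by (ii); then \eqref{lyapunov-iooss-eq} is rewritten as
\begin{equation*}
\dot W=\mathcal{L}\bigl(W-\Phi(W)\bigr),
\end{equation*}
and the near-identity change of variables $u=W-\Phi(W)$ (invertible near the origin since $\mathrm{d}\Phi[0]=0$) yields an equation in which averaging over one period gives $\mathcal{L}\bar u=0$, hence $\bar u=0$ by injectivity. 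The problematic mean mode is thereby eliminated \emph{identically}, and the Lyapunov--Schmidt reduction runs on the mean-zero complement, where (v)--(vii) genuinely furnish a uniform inverse of $T_\omega$; your high-mode estimates and the reversible finite-dimensional reduction then go through essentially as you describe (note that in the reversible Lyapunov centre setting no transversal eigenvalue crossing is needed---simplicity plus reversibility suffice to solve for $\omega(s)$). Without this preliminary transformation, or an equivalent device, the mean-mode obstruction is not a technicality to be ``reconciled'': there is no linear operator available to invert, so the proposal has a genuine gap at its central step.
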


\subsection{Proof of Theorem \ref{thm:main1}}
Let us fix $\alpha\in (\frac{1}{3},2)$.
In order to prove Theorem \ref{thm:main1}, we verify the hypotheses of Theorem \ref{lyap} in the context of the dynamical system formulation for the fKP-I equation \eqref{dynsys1},
 starting with investigating the spectrum of the operator $L$ defined in \eqref{eq:L}.

\begin{lemma}\label{lem:spectrum_L}
	The operator $L\colon H^{2+\alpha}(\mathbb{R})\rightarrow L^2(\mathbb{R})$ is selfadjoint. Moreover, the spectrum of $L$ consists of one simple negative eigenvalue $\lambda$ and the essential spectrum $\sigma_{ess}(L)=[0,\infty)$.
	\end{lemma}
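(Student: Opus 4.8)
The plan is to analyze the operator $L = -\partial_x M_c \partial_x$ with $M_c = D^\alpha - Q_c + c$ by relating it to the well-studied scalar operator $M_c$, whose spectral properties follow from the theory of ground states for the fKdV equation. First I would establish self-adjointness: since $D^\alpha$ is self-adjoint on $H^\alpha(\R)$ (it is a real Fourier multiplier by $|\xi|^\alpha$) and multiplication by the smooth bounded function $Q_c$ from Theorem \ref{prop_Q} is a symmetric bounded perturbation, the operator $M_c$ is self-adjoint on $H^\alpha(\R)$ with domain $H^\alpha$. Conjugating by $\partial_x$, one checks that $L = -\partial_x M_c \partial_x$ is symmetric on the dense domain $H^{2+\alpha}(\R)$, and self-adjointness should follow either directly from the Fourier-multiplier structure of the principal part $\partial_x D^\alpha \partial_x$ (multiplier $|\xi|^{\alpha}\xi^2 = |\xi|^{2+\alpha}$, which is self-adjoint on $H^{2+\alpha}$) together with the relatively bounded perturbation $\partial_x Q_c \partial_x$, or via a Kato–Rellich argument.

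Next I would pin down the essential spectrum. The key observation is that the lower-order term $Q_c$ decays (estimate (ii) of Theorem \ref{prop_Q}), so $\partial_x Q_c \partial_x$ is a relatively compact perturbation of the constant-coefficient operator $L_0 := -\partial_x(D^\alpha + c)\partial_x$, whose symbol is $(|\xi|^\alpha + c)\xi^2 = |\xi|^{2+\alpha} + c\xi^2 \geq 0$. By Weyl's theorem on the invariance of essential spectrum under relatively compact perturbations, $\sigma_{\mathrm{ess}}(L) = \sigma_{\mathrm{ess}}(L_0) = \overline{\{(|\xi|^\alpha + c)\xi^2 : \xi \in \R\}} = [0,\infty)$. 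The main technical point to verify carefully is that $\partial_x Q_c \partial_x$ is indeed relatively compact with respect to $L_0$; this uses the decay of $Q_c$ to gain compactness from the local smoothing of the resolvent of $L_0$.

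The heart of the argument is the discrete spectrum below zero, and here I expect the main obstacle. I would transfer the spectral information from the scalar operator $M_c$ to $L$. It is known from the uniqueness and nondegeneracy theory of Frank–Lenzmann that the linearized fKdV operator $M_c = D^\alpha - Q_c + c$ has exactly one simple negative eigenvalue, a simple eigenvalue $0$ with eigenfunction $Q_c'$ (from translation invariance), and the remainder of its spectrum in $(0,\infty)$. The relation $L = -\partial_x M_c \partial_x$ means that the quadratic form of $L$ on odd test functions is $\langle M_c u_x, u_x\rangle$, i.e. the form of $M_c$ evaluated on the derivative $u_x$. Since $\partial_x$ maps onto a space that still sees the negative direction of $M_c$ (the ground-state eigenfunction of $M_c$ is even, hence equals $v_x$ for a suitable odd $v$), the single negative eigenvalue of $M_c$ produces exactly one negative eigenvalue of $L$; meanwhile the zero mode $Q_c'$ of $M_c$ is odd and lies in the range of $\partial_x$ only as a derivative of an even function, so one must check it does not generate an additional nonpositive eigenvalue of $L$ but is instead absorbed into the threshold. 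I would make this rigorous through a min–max/Courant counting argument: bound the number of negative eigenvalues of $L$ above and below by the negative index of $M_c$ composed with the (injective, dense-range) map $\partial_x$, showing the count is exactly one and the eigenvalue is simple. The delicate part is ensuring that passing through $\partial_x$ neither creates a spurious negative eigenvalue from the kernel of $M_c$ nor merges the genuine negative eigenvalue into the essential spectrum; controlling this interplay between the derivative map and the sign structure of $M_c$ is where the real work lies.
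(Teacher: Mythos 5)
Your proposal follows essentially the same route as the paper's proof: self-adjointness via Kato--Rellich, treating $P=-\partial_x(c-Q_c)\partial_x$ as a relatively bounded perturbation of $-D^{2+\alpha}$ (the paper uses its Gagliardo--Nirenberg-type Lemma \ref{lemmaA} for the relative bound), the essential spectrum $[0,\infty)$ by compact perturbation of the constant-coefficient operator, and the count of exactly one simple negative eigenvalue by transferring the Frank--Lenzmann spectral information on $M_c$ through $\partial_x$ via min--max, with nonnegativity of $\langle M_c w_x,w_x\rangle$ on $\{w: w_x\perp\psi_0\}$ capping the negative count at one. One small correction to your sketch: since the ground-state eigenfunction $\psi_0$ is positive and hence has nonzero mean, it is \emph{not} exactly $v_x$ for any $v\in H^{2+\alpha}(\R)$; the paper instead takes a sequence $w_n\in H^{2+\alpha}(\R)$ with $\partial_x w_n\to\psi_0$ (derivatives are dense, e.g.\ truncate $\hat\psi_0(\xi)/(\mathrm{i}\xi)$ near $\xi=0$) and passes to the limit in the quadratic form, which is exactly the approximation your min--max argument needs and does not otherwise change your plan.
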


\begin{proof}
	Let us start by showing that $L$ is a selfadjoint operator.  Since $-D^{2+\alpha}:H^{2+\alpha}(\R)\to L^2(\R)$ is a symmetric operator with $\pm \mathrm{i} \in \rho(-D^{2+\alpha})$, we infer that $-D^{2+\alpha}$ is self-adjoint. We use a perturbation argument to show that the operator $L=-D^{2+\alpha}+P$ with  \[P:=-\partial_x(c-Q_c)\partial_x\]
	is also self-adjoint.
	Due to the Rellich--Kato theorem \cite[Theorem 5.28]{weidmann}, the operator $L$ is self-adjoint if
	$P$ is symmetric and \emph{$D^{2+\alpha}$}-bounded with $D^{2+\alpha}$-bound less than one, that is if there exist constants $a\in [0,1)$ and $b\geq 0$, such that
	\[
	\|Pw\|_{L^2}\leq a\|D^{2+\alpha}w\|_{L^2}+b\|w\|_{L^2}\qquad \mbox{for any}\qquad w\in H^{2+\alpha}(\R).
	\]
	It is clear that $P$ is a symmetric operator.
	Moreover, if $w\in H^{2+\alpha}(\R)$, then
	\begin{align*}
		\|Pw\|_{L^2}&\leq \|(c-Q_c)\partial_x^2w\|_{L^2}+\|Q_c^\prime \partial_x w\|_{L^2}\\
		&\leq \|c-Q_c\|_\infty\|\partial_x^2 w\|_{L^2}+\|Q_c^\prime\|_{H^\alpha}\|\partial_x w\| _{L^2}\\
		&\leq \e\|D^{2+\alpha}w\|_{L^2}+c(\e)\|w\|_{L^2},
	\end{align*}
for some $\e>0$ and $c(\e)>0$.
	Here we use that $Q_c\in H^{1+\alpha}(\R)$ due to Theorem \ref{prop_Q}, and Lemma \ref{lemmaA}. The constant $\e>0$ can be chosen to be less than one on the cost of $c(\e)>0$. Then it follows that $P$ is $D^{2+\alpha}$-bounded with $D^{2+\alpha}$-bound $\e<1$ and
	$L:H^{2+\alpha}(\R)\to L^2(\R)$ is self-adjoint.
	 Now, we  show that $L$ has a simple negative eigenvalue and the rest of the spectrum is equal to $[0,\infty)$. It is clear that $\sigma_{ess}(L)=[0,\infty)$. Hence, we are only left to prove that $L$ has exactly one simple negative eigenvalue.
	In \cite{franklenzmann} the authors prove that $M_c$ has exactly one simple negative eigenvalue and the rest of the spectrum is included in $[0,\infty)$. Let $\psi_0$ denote the eigenfunction of $M_c$ corresponding to the negative eigenvalue $\lambda_0<0$. Then,
	\begin{equation}\label{eq_M}
		\langle M_cw,w\rangle \geq 0 \qquad \mbox{for all}\quad w\in (\psi_0)^\perp.
	\end{equation}
	We need to show that $L=-\partial_xM_c\partial_x$ has exactly one simple negative eigenvalue. First we prove that $L$ has at least one negative eigenvalue by using the min-max principle. Let $(w_n)_{n\in \N}\in H^{\alpha+2}(\R)$ be a sequence such that $\partial_x w_n \grave{\rightarrow}\psi_0 \in H^{\alpha+1}(\R)$. Then,
	\[
	\langle Lw_n,w_n\rangle = \langle M_c\partial_x w_n,\partial_x w_n\rangle \rightarrow \langle M_c \psi_0,\psi_0\rangle =\lambda_0\|\psi_0\|^2<0
	\]
	for $n\to \infty$.
	Denoting  the smallest eigenvalue of $L$ by $\lambda_m\in \R$, we deduce that
	\[
	\lambda_m=\inf_{\|u\|_{H^{\alpha+2}}=1}\langle Lw,w\rangle \leq \langle Lw_n,w_n\rangle<0
	\]
	for $n\in \N$ large enough. Thus, $L$ has at least one negative eigenvalue. By \eqref{eq_M}, we obtain that
	\[
	\langle Lw,w\rangle = \langle M_c w_x,w_x\rangle \geq 0
	\]
	for every $w\in H^{\alpha+2}(\R)$ such that $w_x \in  (\psi_0)^\perp$. This shows that $L$ is nonnegative on a codimension one subspace. Therefore, $L$ has at most one negative simple eigenvalue. Combining these results  we conclude that $L$ has exactly one negative simple eigenvalue.
	\end{proof}

\begin{remark}
	\emph{Our analysis is restricted to the case $\alpha\in(\tfrac{1}{3},2)$ and the solitary solution $Q_c$ of the fKdV equation being a ground state. The lower bound for $\alpha$ is simply due to the nonexistence result of solitary solutions in the energy space for fKdV when $\alpha\leq \tfrac{1}{3}$, cf \cite[Theorem 4.1]{lineares3}. The reason for the upper bound $\alpha<2$ and for considering ground state solutions $Q_c$ is because we rely strongly on the characterization of the spectrum of the operator $M_c$ defined in \eqref{eq:L}, which is studied in \cite{franklenzmann} for exactly this case. The case $\alpha=2$ corresponds to the classical KP equation and is already well-studied. If $\alpha> 2$, then there exist indeed solutions $Q_c$ of fKdV (cf. \cite{arnesen,weinstein}), however it is not clear what the spectrum of the corresponding  linearized operator $M_c$ around $Q_c$ is.}
\end{remark}

We  use the spectral information about $L$ in Lemma \ref{lem:spectrum_L} to determine the spectrum of $\mathcal{L}$.
\begin{proposition}\label{spectrum_of_L}
	Let $\lambda$ be the sole negative eigenvalue of $L$ found in Lemma \ref{lem:spectrum_L}. Then,
	the operator $\mathcal{L}:Z \to X$ has two simple eigenvalues $\pm \mathrm{i}\sqrt{|\lambda|}$ and the rest of the spectrum is included in $\mathbb{R}$.
\end{proposition}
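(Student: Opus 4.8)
The plan is to reduce the spectral analysis of the block operator $\mathcal{L}$ to that of the scalar operator $L$, whose spectrum is already pinned down in Lemma~\ref{lem:spectrum_L}. First I would write the eigenvalue equation $\mathcal{L}W=\mu W$ componentwise: with $W=(W_1,W_2)$ it reads $LW_2=\mu W_1$ and $W_1=\mu W_2$, so that, eliminating $W_1$,
\begin{equation*}
	LW_2=\mu^2 W_2,\qquad W_1=\mu W_2.
\end{equation*}
Hence $\mu$ is an eigenvalue of $\mathcal{L}$ exactly when $\mu^2$ is an eigenvalue of $L$, the eigenspaces corresponding via $W_2\mapsto(\mu W_2,W_2)$. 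By Lemma~\ref{lem:spectrum_L} the only eigenvalue of $L$ is the simple negative one $\lambda$, so $\mu^2=\lambda$ forces $\mu=\pm\mathrm{i}\sqrt{|\lambda|}$. I would then record that the associated eigenfunction $\varphi$ of $L$ is odd: since $Q_c$ is even, $L$ commutes with the reflection $x\mapsto -x$, and in the proof of Lemma~\ref{lem:spectrum_L} the negative direction is realized by odd test functions $w_n$ (because $\partial_x w_n\to\psi_0$ with $\psi_0$ even). Consequently the candidate eigenpairs $(\pm\mathrm{i}\sqrt{|\lambda|}\,\varphi,\varphi)$ genuinely lie in the odd space $Z$, so $\pm\mathrm{i}\sqrt{|\lambda|}$ are eigenvalues of $\mathcal{L}\colon Z\to X$.

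Next I would locate the rest of the spectrum by an explicit resolvent construction. For $F=(F_1,F_2)\in X$ the equation $(\mathcal{L}-\mu)W=F$ is equivalent, after the same elimination, to
\begin{equation*}
	(L-\mu^2)W_2=F_1+\mu F_2,\qquad W_1=\mu W_2+F_2.
\end{equation*}
Thus whenever $\mu^2\in\rho(L)$, the pair $W_2=(L-\mu^2)^{-1}(F_1+\mu F_2)\in H^{2+\alpha}_{odd}$ and $W_1=\mu W_2+F_2\in H^{1+\frac{\alpha}{2}}_{odd}$ (using $H^{2+\alpha}\hookrightarrow H^{1+\frac{\alpha}{2}}$) is the unique solution and depends boundedly on $F$, so $\mu\in\rho(\mathcal{L})$. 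Since $\sigma(L)=\{\lambda\}\cup[0,\infty)$, for every non-real $\mu$ with $\mu\neq\pm\mathrm{i}\sqrt{|\lambda|}$ one checks directly that $\mu^2\notin\{\lambda\}\cup[0,\infty)$ (a non-real square is never a nonnegative real, and $\mu^2=\lambda<0$ only for $\mu=\pm\mathrm{i}\sqrt{|\lambda|}$), whence $\mu\in\rho(\mathcal{L})$. Therefore every spectral point of $\mathcal{L}$ other than $\pm\mathrm{i}\sqrt{|\lambda|}$ is real, which is the asserted structure.

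Finally I would prove simplicity of $\pm\mathrm{i}\sqrt{|\lambda|}$. Geometric simplicity is immediate from the one-to-one correspondence of eigenspaces together with the simplicity of $\lambda$ for $L$. To rule out a Jordan block I would search for a generalized eigenvector, i.e.\ solve $(\mathcal{L}-\mu)V=(\mu\varphi,\varphi)$; the reduction above gives $(L-\mu^2)V_2=\mu\varphi+\mu\varphi=2\mu\varphi$ with $\mu^2=\lambda$, that is $(L-\lambda)V_2=2\mu\varphi$. Because $L$ is self-adjoint and $\varphi\in\ker(L-\lambda)=\big(\operatorname{Ran}(L-\lambda)\big)^{\perp}$, the nonzero right-hand side $2\mu\varphi$ is orthogonal to the range of $L-\lambda$ and hence not attained, so no such $V$ exists and the algebraic multiplicity is one. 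The main technical care in this argument is the domain and boundedness bookkeeping in the product spaces $X$ and $Z$ (in particular that $(L-\mu^2)^{-1}$ maps $L^2_{odd}$ boundedly into $H^{2+\alpha}_{odd}$, which follows from $L$ being closed and self-adjoint) and the verification that the negative eigenfunction of $L$ is odd, so that the two purely imaginary eigenvalues survive the restriction to the odd subspace.
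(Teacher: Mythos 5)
Your argument is correct and is essentially the paper's own proof: the paper likewise reduces the resolvent equation $(\mathcal{L}-\mu I)W=W^*$ componentwise to $(L-\mu^2)W_2=W_1^*+\mu W_2^*$, $W_1=\mu W_2+W_2^*$, concludes that $\mu\in\rho(\mathcal{L})$ if and only if $\mu^2\in\rho(L)$, and reads off the eigenvalues $\pm\mathrm{i}\sqrt{|\lambda|}$ from Lemma \ref{lem:spectrum_L} by setting $W^*=0$. The two points you work out in extra detail---that the negative eigenfunction of $L$ is odd (so the eigenvalues survive the restriction to the odd spaces $X$ and $Z$; this rests on the evenness of the ground-state eigenfunction $\psi_0$ of $M_c$, which holds by its positivity) and that self-adjointness of $L$ rules out a Jordan block at $\pm\mathrm{i}\sqrt{|\lambda|}$, giving algebraic rather than merely geometric simplicity---are left implicit in the paper, and both of your verifications are sound.
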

\begin{proof}
	Let $\mu \in \mathbb{C}$.
	The resolvent equation $(\mathcal{L}-\mu I)W=W^*$ is equivalent to
	\begin{align*}
		\begin{cases}
			LW_2-\mu W_1&=W_1^*,\\
			W_1-\mu W_2&=W_2^*,
		\end{cases}\qquad \Longleftrightarrow \qquad
		\begin{cases}
			(L-\mu^2)W_2&=W_1^*+\mu W_2^*,\\
			W_1&=\mu W_2+W_2^*,
		\end{cases}
	\end{align*}
and so $\mu\in\rho(\mathcal{L})$ if and only if $\mu^2\in \rho(L)$ which in turn is equivalent with $\mu\in \mathbb{C}\backslash \{\mathbb{R}\cup \{\pm \mathrm{i}\sqrt{\lambda}\}\}$, by Lemma \ref{lem:spectrum_L}. In particular, if we put $W^*=0$, we find that $\mu$ is an eigenvalue of $\mathcal{L}$ if and only if $\mu^2$ is an eigenvalue of $L$, which implies that $\mathcal{L}$ has two simple eigenvalues $\pm \mathrm{i}\sqrt{|\lambda|}$.
\end{proof}

In view of Proposition \ref{spectrum_of_L} we conclude that $\mathcal{L}$ has two simple nonzero eigenvalues and $\mathrm{i}n\sqrt{|\lambda|}\in \rho(\mathcal{L})$ for $|n|>1$. Hence condition (v) and (vi) of the Lyaponov--Iooss theorem are satisfied. Next, we show that the resolvent estimates claimed in (vii) of Theorem \ref{lyap} hold true for the operator $\mathcal{L}$.
\begin{proposition}\label{resolvent-est} The operator $\mathcal{L}$ satisfies
	\begin{align*}
		\norm{(\mathcal{L}-\mathrm{i}n\sqrt{|\lambda|}I)^{-1}}_{X\rightarrow X}\lesssim \frac{1}{|n|},\qquad \mbox{and}\qquad
		\norm{(\mathcal{L}-\mathrm{i}n\sqrt{|\lambda|}I)^{-1}}_{X\rightarrow Z}\lesssim 1.
	\end{align*}
for $|n|>1$.
\end{proposition}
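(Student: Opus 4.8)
The plan is to exploit the block structure of $\mathcal{L}$ to reduce both resolvent estimates to scalar estimates on the resolvent $(L-\mu^2)^{-1}$ with $\mu = \mathrm{i}n\sqrt{|\lambda|}$, so that $\mu^2 = -n^2|\lambda|$ is a large negative real number lying safely in $\rho(L)=[0,\infty)^c \cup \{\lambda\}^c$ for $|n|>1$. From the resolvent computation already carried out in the proof of Proposition \ref{spectrum_of_L}, solving $(\mathcal{L}-\mu I)W = W^*$ gives
\[
W_2 = (L-\mu^2)^{-1}(W_1^*+\mu W_2^*),\qquad W_1 = \mu W_2 + W_2^*.
\]
First I would pass to the Fourier/spectral side. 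Since $L$ is self-adjoint with spectrum $\{\lambda\}\cup[0,\infty)$ (Lemma \ref{lem:spectrum_L}), the spectral theorem gives, for $\mu^2 = -n^2|\lambda|$, the sharp bound $\|(L-\mu^2)^{-1}\|_{L^2\to L^2} = \mathrm{dist}(-n^2|\lambda|,\sigma(L))^{-1}$. Because $-n^2|\lambda|<\lambda<0$ for $|n|>1$, the distance to the spectrum is $n^2|\lambda|$, which yields $\|(L-\mu^2)^{-1}\|_{L^2\to L^2}\lesssim 1/n^2$.

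Next I would track how this scalar bound feeds into the mixed-norm spaces $X = L^2_{odd}\times H^{1+\alpha/2}_{odd}$ and $Z = H^{1+\alpha/2}_{odd}\times H^{2+\alpha}_{odd}$. The key gain comes from the elliptic regularity of $L=-\partial_x M_c\partial_x$: up to the lower-order perturbation $P$, $L$ behaves like $D^{2+\alpha}$, so for the resolvent one expects not only $\|(L-\mu^2)^{-1}\|_{L^2\to L^2}\lesssim 1/n^2$ but also a smoothing estimate of the form $\|(L-\mu^2)^{-1}\|_{L^2\to H^{2+\alpha}}\lesssim 1$ (uniformly in $n$), and interpolated gains such as $\|(L-\mu^2)^{-1}\|_{L^2\to H^{1+\alpha/2}}\lesssim 1/|n|$. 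These follow by writing, on the spectral side, the symbol-type estimate $\tfrac{1+|\xi|^{2+\alpha}}{|\xi|^{2+\alpha}+n^2|\lambda|}$ and observing it is $O(1)$ uniformly, while $\tfrac{(1+|\xi|^2)^{(1+\alpha/2)/2}}{|\xi|^{2+\alpha}+n^2|\lambda|}$ is $O(1/|n|)$; the perturbation $P$ is handled exactly as in Lemma \ref{lem:spectrum_L} via its relative bound, using a Neumann series once $n^2|\lambda|$ dominates the $P$-bound. Plugging these into the formulas for $W_1,W_2$ and counting powers of $\mu\sim|n|$ against the smoothing powers of $1/|n|$ should produce the claimed $X\to X$ bound $\lesssim 1/|n|$ and the $X\to Z$ bound $\lesssim 1$.

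I expect the main obstacle to be the bookkeeping in the $W_1$-component, where the explicit factor $\mu\sim |n|$ multiplies $W_2$: to obtain an overall $1/|n|$ decay in $X\to X$ one needs the smoothing of $(L-\mu^2)^{-1}$ acting on $\mu W_2^*$ to beat this growth, which is precisely the subtle interpolated estimate $\|(L-\mu^2)^{-1}\|\lesssim 1/|n|$ at the $H^{1+\alpha/2}$ level rather than the crude $1/n^2$ at the $L^2$ level. Establishing these mixed-norm, uniform-in-$n$ resolvent bounds — and verifying that the non-local, fractional perturbation $P=-\partial_x(c-Q_c)\partial_x$ does not spoil them — is the technical heart of the argument; the rest is algebra on the block formulas.
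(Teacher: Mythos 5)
Your reduction to scalar bounds for $(L-\mu^2)^{-1}$, $\mu=\mathrm{i}n\sqrt{|\lambda|}$, is a genuinely different route from the paper's: there, $L$ is split as $L_1+L_2$ with $L_1=D^{2+\alpha}+1$ a positive Fourier multiplier, $X$ is equipped with a modified inner product making $\mathcal{L}_1$ self-adjoint, so that $\norm{(\mathcal{L}_1-\mathrm{i}n\sqrt{|\lambda|}I)^{-1}}_{X\to X}\lesssim 1/|n|$ is immediate from the spectral theorem at the \emph{matrix} level, and $\mathcal{L}_2$ is then absorbed by a Neumann series. Your componentwise route has, however, a genuine gap in the $X\to X$ estimate, exactly at the spot you flag but not for the reason you give. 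Writing $W_1=\mu W_2+W_2^*=\mu(L-\mu^2)^{-1}W_1^*+\bigl(\mu^2(L-\mu^2)^{-1}+I\bigr)W_2^*$, the two pieces of the bracket are each genuinely $O(1)$, not $O(1/|n|)$: for $W_2^*$ concentrated near frequency zero, $\mu^2(L-\mu^2)^{-1}$ acts like $-I$ (its symbol at the multiplier level, $-n^2|\lambda|/(|\xi|^{2+\alpha}+1+n^2|\lambda|)$, is $\approx -1$ at $\xi\approx 0$), so $\norm{\mu^2(L-\mu^2)^{-1}W_2^*}_{L^2}\approx\norm{W_2^*}_{L^2}\approx\norm{W_2^*}_{H^{1+\frac{\alpha}{2}}}$. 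The interpolated estimate you invoke cannot rescue this: $\norm{(L-\mu^2)^{-1}}_{L^2\to H^{1+\frac{\alpha}{2}}}\lesssim 1/|n|$ concerns the \emph{target} regularity and helps with $\norm{W_2}_{H^{1+\frac{\alpha}{2}}}$, whereas the relevant norm here, $\norm{(L-\mu^2)^{-1}}_{H^{1+\frac{\alpha}{2}}\to L^2}$, is exactly of order $1/n^2$ (sharp at low frequency), so power counting caps at $O(1)$ for the first component. The $1/|n|$ decay comes only from the low-frequency \emph{cancellation} between the two pieces, encoded in the identity $I+\mu^2(L-\mu^2)^{-1}=L(L-\mu^2)^{-1}$; equivalently, solve directly $(L-\mu^2)W_1=\mu W_1^*+LW_2^*$ and use that the symbol $(|\xi|^{2+\alpha}+1)\bigl((|\xi|^{2+\alpha}+1+n^2|\lambda|)(1+\xi^2)^{\frac{1}{2}(1+\frac{\alpha}{2})}\bigr)^{-1}$ now vanishes to leading order at $\xi=0$ and has supremum $\lesssim 1/|n|$. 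This is the missing idea; it is precisely what the paper's modified inner product buys for free, since self-adjointness of $\mathcal{L}_1$ on $X$ yields the $1/|n|$ bound without any componentwise bookkeeping.

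A secondary issue: transferring your mixed-norm bounds from the multiplier to the full $L$ is not handled ``exactly as in Lemma~\ref{lem:spectrum_L}''. The relative bound there controls $P$ only as a map $H^{2+\alpha}(\R)\to L^2(\R)$; a Neumann series in, say, the $H^{1+\frac{\alpha}{2}}\to H^{1+\frac{\alpha}{2}}$ operator norm would require bounds on $P$ in higher norms, which is delicate since $Q_c'\in H^\alpha(\R)$ only and $\alpha$ may be close to $\tfrac13$. This can be patched without product estimates — e.g.\ use $(L-\mu^2)^{-1}Lu=L(L-\mu^2)^{-1}u$ on $H^{2+\alpha}(\R)$ to get $\norm{(L-\mu^2)^{-1}}_{H^{2+\alpha}\to H^{2+\alpha}}\lesssim 1/n^2$ and interpolate with the $L^2\to L^2$ bound — but it must be said. (The paper sidesteps this too: since $\mathcal{L}_2$ lands only in the first, $L^2$-measured component, only the crude $H^{2+\alpha}\to L^2$ bound on $P$ is ever needed.) Finally, note that both your Neumann series and the paper's give invertibility only for $|n|$ large; the finitely many remaining $n$ with $|n|>1$ are covered by Proposition~\ref{spectrum_of_L}, and a minor slip: the distance from $-n^2|\lambda|$ to $\sigma(L)$ is $(n^2-1)|\lambda|$, not $n^2|\lambda|$, which is asymptotically harmless.
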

\begin{proof}
Here we use the same strategy as in the proof of Lemma 4.2 in \cite{gsw2}. Let us write
\begin{equation*}
L=-\partial_x M_c\partial_x=(-\mathrm{D}^\alpha\partial_x^2+1)+(-\partial_x(c-Q_c)\partial_x-1)=: L_1 + L_2,
\end{equation*}
where $L_1:=-\mathrm{D}^\alpha\partial_x^2+1$ and $L_2:=-\partial_x(c-Q_c)\partial_x-1$.  Accordingly, we split the matrix operator $\mathcal{L}$ into
\begin{equation*}
\mathcal{L}=\left(\begin{array}{cc}
0 & L_1\\
1 & 0
\end{array}\right)
+\left(\begin{array}{cc}
0 & L_2\\
0 & 0
\end{array}\right)=:\mathcal{L}_1 + \mathcal{L}_2.
\end{equation*}
On $X$ we define the inner product
\begin{equation*}
\langle (W_1,W_2),(U_1,U_2)\rangle_X=\int_\mathbb{R} W_1U_1+W_2U_2+\mathrm{D}^{\frac{\alpha}{2}}\partial_xW_2\mathrm{D}^{\frac{\alpha}{2}}\partial_x U_2\ \mathrm{d}x
\end{equation*}
and see that $\mathcal{L}_1$ is symmetric by
\begin{align*}
\langle \mathcal{L}_1(W_1,W_2),(U_1,U_2)\rangle_X&=\langle ((-\mathrm{D}^\alpha\partial_x^2+1)W_2,W_1),(U_1,U_2)\rangle_X\\
&=\int_\mathbb{R}(-\mathrm{D}^\alpha\partial_x^2+1)W_2U_1+W_1U_2+\mathrm{D}^{\frac{\alpha}{2}}\partial_xW_1\mathrm{D}^{\frac{\alpha}{2}}\partial_xU_2\ \mathrm{d}x\\
&=\int_\mathbb{R} W_1(-\mathrm{D}^\alpha\partial_x^2+1)U_2+W_2U_1+\mathrm{D}^{\frac{\alpha}{2}}\partial_xW_2\mathrm{D}^{\frac{\alpha}{2}}\partial_xU_1\ \mathrm{d}x\\
&=\langle (W_1,W_2),\mathcal{L}_1(U_1,U_2)\rangle_X.
\end{align*}
Since the spectrum of $L_1$ is included in $[1,\infty)$ an analog argument as in the proof of Proposition \ref{spectrum_of_L} yields that the spectrum of $\mathcal{L}_1\subset \R$ (the imaginary eigenvalues of $\mathcal{L}$ were generated by the sole negative eigenvalue of $L$, which does not occure for $L_1$). Thus
$\mathcal{L}_1$ is selfadjoint with respect to $\langle\cdot,\cdot\rangle$ and it follows immediately (see e.g. \cite[Theorem 3.16]{kato}) that
\begin{equation}\label{resolvent_est_l1_1}
\norm{(\mathcal{L}_1-\mathrm{i}n\sqrt{|\lambda|}I)^{-1}}_{X\rightarrow X}\lesssim \frac{1}{n}\qquad \mbox{for all}\quad n\in \N.
\end{equation}
Next, let us note that
\begin{align*}
\norm{\mathcal{L}_1(W_1,W_2)}_X^2=\norm{(-\mathrm{D}^\alpha\partial_x^2+1)W_2}_{L^2}^2+\norm{W_1}_{H^{1+\frac{\alpha}{2}}}^2\simeq\norm{W_1}_{H^{1+\frac{\alpha}{2}}}^2+\norm{W_2}_{H^{2+\alpha}}^2=\norm{(W_1,W_2)}_Z^2.
\end{align*}
Using this together with \eqref{resolvent_est_l1_1} we find that
\begin{gather}\label{resolvent_est_l1_2}
\begin{aligned}
\norm{(\mathcal{L}_1-\mathrm{i}n\sqrt{|\lambda|}I)^{-1}}_{X\rightarrow Z}&\simeq \norm{\mathcal{L}_1(\mathcal{L}_1-\mathrm{i}n\sqrt{|\lambda|}I)^{-1}}_{X\rightarrow X}\\
&=\norm{I+\mathrm{i}n\sqrt{|\lambda|}(\mathcal{L}_1-\mathrm{i}n\sqrt{|\lambda|}I)^{-1}}_{X\rightarrow X}\\
&\lesssim 1
\end{aligned}
\end{gather}
for all $n\in \N$, by \eqref{resolvent_est_l1_1}.
We are left to show the corresponding estimates \eqref{resolvent_est_l1_1} and \eqref{resolvent_est_l1_2} for the operator $\mathcal{L}$ if $n$ is sufficiently large. To this end, observe that
\[
(\mathcal{L}-\mathrm{i}n\sqrt{|\lambda|}I)^{-1}= (\mathcal{L}_1-\mathrm{i}n\sqrt{|\lambda|}I)^{-1}(I+\mathcal{L}_2(\mathcal{L}_1-\mathrm{i}n\sqrt{|\lambda|}I)^{-1})^{-1}.
\]
Hence, the statement is proved if we can show that
\begin{equation}\label{eq:N}
I+\mathcal{L}_2(\mathcal{L}_1-\mathrm{i}n\sqrt{|\lambda|}I)^{-1}\colon X\rightarrow X,
\end{equation}
is invertible
with $\norm{\mathcal{L}_2(\mathcal{L}_1-\mathrm{i}n\sqrt{|\lambda|}I)^{-1}}_X < 1$ for all $n\in \N$ large enough.
Since
\begin{equation*}
(\mathcal{L}_1-\mathrm{i}n\sqrt{|\lambda|}I)^{-1}=\left(\begin{array}{cc}
\mathrm{i}n\sqrt{|\lambda|}(L_1-k^2\lambda)^{-1} & 1-n^2\lambda (L_1-k^2\lambda)^{-1}\\
(L_1-n^2\lambda)^{-1} & \mathrm{i}n\sqrt{|\lambda|}(L_1-n^2\lambda)^{-1}
\end{array}\right),
\end{equation*}
and thus
\begin{equation*}
\mathcal{L}_2(\mathcal{L}_1-\mathrm{i}n\sqrt{|\lambda|}I)^{-1}W=\left(L_2[(L_1-n^2\lambda)^{-1}W_1+\mathrm{i}n\sqrt{|\lambda|}(L_1-n^2\lambda)^{-1}W_2],0\right),
\end{equation*}
we estimate
\begin{equation}\label{l2_l1_estimate}
\norm{\mathcal{L}_2(\mathcal{L}_1-\mathrm{i}n\sqrt{|\lambda|}I)^{-1}W}_X\leq \norm{L_2(L_1-n^2\lambda)^{-1}W_1}_{L^2}+|n|\sqrt{|\lambda|}\norm{L_2(L_1-k^2\lambda)^{-1}W_2}_{L^2}.
\end{equation}
Recall that $L_2=-\partial_x(c-Q_c)\partial_x-1=P-1$, where $P$ is defined in the proof of Lemma \ref{lem:spectrum_L}, and we have shown that for any $\e>0$ there exist $c(\e)>0$ such that
\begin{equation*}
\norm{Pw}_{L^2}\leq\varepsilon\norm{\mathrm{D}^{2+\alpha}w}_{L^2}+c(\varepsilon)\norm{w}_{L^2}.
\end{equation*}
Moreover, since $L_1$ is selfadjoint on $L^2(\mathbb{R})$ we have that
\begin{equation*}
\norm{(L_1-n^2\lambda)^{-1}}_{L^2\rightarrow L^2}\lesssim \frac{1}{n^2}.
\end{equation*}
We use these properties to estimate each of the two terms in the right hand side of \eqref{l2_l1_estimate} as
\begin{gather}\label{smallness_1}
\begin{aligned}
\norm{L_2(L_1-n^2\lambda)^{-1}W_1}_{L^2}&\leq \norm{P(L_1-n^2\lambda)^{-1}W_1}_{L^2}+\norm{(L_1-n^2\lambda)^{-1}W_1}_{L^2}\\
&\leq  \varepsilon \norm{\mathrm{D}^{2+\alpha}(L_1-k^2\lambda)^{-1}W_1}_{L^2}+(1+c(\varepsilon))\norm{(L_1-k^2\lambda)^{-1}W_1}_{L^2}\\
&\lesssim \varepsilon\norm{W_1}_{L^2}+\frac{1+c(\varepsilon)}{n^2}\norm{W_1}_{L^2}
\end{aligned}
\end{gather}
and similarly
\begin{gather}\label{smallness_2}
\begin{aligned}
|n|\norm{L_2(L_1-n^2\lambda)^{-1}W_2}_{L^2}&\leq |n|\norm{P(L_1-n^2\lambda)^{-1}W_2}_{L^2}+|n|\norm{(L_1-n^2\lambda)^{-1}W_2}_{L^2}\\
&\leq |n|\varepsilon\norm{\mathrm{D}^{2+\alpha}(L_1-n^2\lambda)^{-1}W_2}_{L^2}+|n|(1+c(\varepsilon))\norm{(L_1-n^2\lambda)^{-1}W_2}_{L^2}\\
&=|n|\varepsilon\norm{\frac{|\xi|^{1+\frac{\alpha}{2}}}{|\xi|^{2+\alpha}-n^2\lambda}|\xi|^{1+\frac{\alpha}{2}}\hat W_2}_{L^2}+|n|(1+c(\varepsilon))\norm{(L_1-n^2\lambda)^{-1}W_2}_{L^2}\\
&\leq |n|\varepsilon\, \text{sup}_{\xi\in[0,\infty)}\left|\frac{\xi^{1+\frac{\alpha}{2}}}{\xi^{2+\alpha}-n^2\lambda}\right|\norm{W_2}_{H^{1+\frac{\alpha}{2}}}+\frac{1+c(\varepsilon)}{|n|}\norm{W_2}_{L^2}\\
&\lesssim \varepsilon \norm{W_2}_{H^{1+\frac{\alpha}{2}}}+\frac{1+c(\varepsilon)}{|n|}\norm{W_2}_{L^2}.
\end{aligned}
\end{gather}
Hence, using \eqref{smallness_1}, \eqref{smallness_2} in \eqref{l2_l1_estimate} we find that
\begin{align*}
\norm{\mathcal{L}_2(\mathcal{L}_1-\mathrm{i}n\sqrt{|\lambda|}I)^{-1}W}_X&\lesssim \varepsilon\norm{W_1}_{L^2}+\frac{1}{n^2}\norm{W_1}_{L^2}+\varepsilon\norm{W_2}_{H^{1+\frac{\alpha}{2}}}+\frac{1+c(\e)}{n}\norm{W_2}_{L^2}\\
&\leq \left(\varepsilon+\frac{1+c(\e)}{n}\right)\norm{W}_X.
\end{align*}
By choosing $\varepsilon$ sufficiently small we obtain that $\norm{\mathcal{L}_2(\mathcal{L}_1-\mathrm{i}n\sqrt{|\lambda|}I)^{-1}}_{X\to X}<1$ for sufficiently large $n\in \N$. Thus, the operator in \eqref{eq:N} is invertible and
the desired resolvent estimates in the statement of the proposition now follow from \eqref{resolvent_est_l1_1}, \eqref{resolvent_est_l1_2}.
\end{proof}

In order to verify hypothesis (iv) of Theorem \ref{lyap} we start by determining the kernel of $L$ on the $H^{2+\alpha}_{\text{odd}}(\R)$.
\begin{lemma}\label{kerL}
	The operator $L\colon H_{\text{odd}}^{2+\alpha}(\mathbb{R})\rightarrow L_{\text{odd}}^2(\mathbb{R})$ has trivial kernel.
\end{lemma}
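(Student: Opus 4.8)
The plan is to exploit the factored structure $L=-\partial_x M_c\partial_x$ from \eqref{eq:L}, combined with the nondegeneracy of the ground state and a parity obstruction. Suppose $w\in H_{\text{odd}}^{2+\alpha}(\R)$ satisfies $Lw=0$, i.e. $\partial_x\!\left(M_c\partial_x w\right)=0$ in the distributional sense. First I would observe that $M_c\partial_x w$ is an $L^2$ function: since $w\in H^{2+\alpha}(\R)$ we have $\partial_x w\in H^{1+\alpha}(\R)$, so $D^\alpha\partial_x w\in H^1(\R)\subset L^2(\R)$, while multiplication by the bounded smooth function $c-Q_c$ maps $L^2(\R)$ into itself. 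A function in $L^2(\R)$ whose distributional derivative vanishes must be the zero constant, so we conclude $M_c\partial_x w=0$, that is $\partial_x w\in\ker M_c$.

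Next I would bring in the structural information about $M_c$. Differentiating the profile equation \eqref{fkpODE} with respect to $x$ gives
\[
M_c Q_c'=\left(D^\alpha-Q_c+c\right)Q_c'=0,
\]
so $Q_c'\in\ker M_c$. The nondegeneracy of ground states established by Frank and Lenzmann \cite{franklenzmann} asserts that this kernel is exactly one-dimensional, $\ker M_c=\mathrm{span}\{Q_c'\}$. Hence there is a constant $\beta\in\R$ with $\partial_x w=\beta Q_c'$.

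The final step is the parity argument, which is precisely where the restriction to odd functions is used. Since $w$ is odd, $\partial_x w$ is even; on the other hand $Q_c$ is even by Theorem \ref{prop_Q}, so $Q_c'$ is odd and not identically zero. An even function that is a scalar multiple of a nonzero odd function must vanish, which forces $\beta=0$ and hence $\partial_x w=0$. Therefore $w$ is constant, and since the only constant in $L^2(\R)$ is $0$ we obtain $w=0$, so $L$ has trivial kernel on $H_{\text{odd}}^{2+\alpha}(\R)$.

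The routine ingredients are the mapping and regularity claims that place $M_c\partial_x w$ in $L^2(\R)$; the only genuinely nontrivial input, which I would import rather than reprove, is the nondegeneracy identity $\ker M_c=\mathrm{span}\{Q_c'\}$ from \cite{franklenzmann}. I expect this to be the main point to cite carefully, together with the observation that the oddness of the function space is exactly what removes the translational kernel direction $Q_c'$ and thereby yields triviality.
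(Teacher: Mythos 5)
Your proposal is correct and follows essentially the same route as the paper: factor $Lw=0$ into $M_c\partial_x w=\text{const}$, use decay to kill the constant, invoke the Frank--Lenzmann nondegeneracy $\ker M_c=\mathrm{span}\{Q_c'\}$, and finish with a parity argument in the odd space. The only cosmetic difference is that you apply parity to $\partial_x w=\beta Q_c'$ (even vs.\ odd) while the paper integrates first and notes $Q_c\notin H^{2+\alpha}_{\text{odd}}(\R)$; the two are interchangeable.
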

\begin{proof}
	Let $w\in \ker(L)$ be nontrivial. Then $Lw=0$, which implies that $\partial_x M_c\partial_x w=0$, which is equivalent to $M_c\partial_xw =d$ for some constant $d\in \R$. Since
$M_c\partial_xw\in H_{\text{odd}}^1(\mathbb{R})$, we must have that $d=0$. We deduce  that $\partial_x w\in \text{ker}(M_c)$. The kernel of $M_c$ is known to be one-dimensional and spanned by $Q_c^\prime$, see \cite[Theorem 2.3]{franklenzmann}, so we must have that $w\in \text{span}\{Q_c\}$. However, $Q_c$ is an even function and so it does not belong to $H_{\text{odd}}^{2+\alpha}(\mathbb{R})$, hence the kernel of $L$ is trivial.
\end{proof}
\begin{lemma}\label{L-inverse}
	The operator $L$ maps $H_{\text{odd}}^{2+\alpha}(\mathbb{R})$ one-to-one onto $L_{\text{odd}}^2(\R)$ and $L^{-1}$ is a bounded linear map from $L_{\text{odd}}^2(\R)$ to $H_{\text{odd}}^{2+\alpha}(\mathbb{R})$.
\end{lemma}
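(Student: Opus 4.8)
The plan is to invert $L$ by exploiting its factorisation $L=-\partial_x M_c\partial_x$ together with the spectral information on $M_c$ coming from \cite{franklenzmann}. First I would record that $L$ genuinely maps $H^{2+\alpha}_{\text{odd}}(\R)$ into $L^2_{\text{odd}}(\R)$: since $Q_c$ is even, $M_c$ commutes with the reflection $x\mapsto -x$, and conjugating by the two factors $\partial_x$, each of which reverses parity, shows that $L$ preserves oddness. Injectivity is already contained in Lemma \ref{kerL}, so the entire content of the statement is surjectivity together with the bound $\|L^{-1}f\|_{H^{2+\alpha}}\lesssim\|f\|_{L^2}$.

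For surjectivity I would solve $Lw=f$ in three steps, reading the factorisation from the outside in. Setting $h:=M_c\partial_x w$, the outer equation $-\partial_x h=f$ is solved by $h=-\partial_x^{-1}f$, and since $f$ is odd, $h$ is even. On the even subspace $M_c$ is boundedly invertible: by \cite[Theorem 2.3]{franklenzmann} its kernel is spanned by the \emph{odd} function $Q_c'$, while $\sigma_{\mathrm{ess}}(M_c)=[c,\infty)$ with $c>0$; hence $0$ lies in a spectral gap of $M_c|_{\text{even}}$ and $M_c^{-1}\colon L^2_{\text{even}}\to H^\alpha_{\text{even}}$ is bounded. This produces the even function $\partial_x w=M_c^{-1}h$, and a final antiderivative gives $w=\partial_x^{-1}M_c^{-1}(-\partial_x^{-1}f)$, which is odd. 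Counting derivatives, namely $\alpha$ gained from $M_c^{-1}$ and one from each antiderivative, yields $w\in H^{2+\alpha}$, and composing the three bounded maps gives the operator bound, \emph{provided} each antiderivative is well defined in $L^2$.

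The main obstacle is exactly this proviso. The bottom of the essential spectrum of $L$ sits at $0$, because the Fourier symbol of the principal part $-\partial_x(D^\alpha+c)\partial_x$ is $|\xi|^2(|\xi|^\alpha+c)$, which vanishes at $\xi=0$; consequently $\partial_x^{-1}$ is unbounded at zero frequency and one cannot invert naively. The delicate point is therefore the behaviour at $\xi=0$: one must verify that $h=-\partial_x^{-1}f$ and then $M_c^{-1}h$ decay fast enough near the origin that $|\widehat{\partial_x w}(\xi)|^2/\xi^2$ is integrable there, so that the antiderivatives land in $L^2$. This is precisely where the restriction to odd functions should be used, since oddness is what forces the zero-frequency mode to vanish, and I expect the crux of the argument to be a quantitative version of this statement, upgrading the $\dot H^1$-coercivity $\langle Lw,w\rangle=\langle M_c w_x,w_x\rangle\gtrsim\|w_x\|_{L^2}^2$ (valid on the relevant codimension-one subspace by the spectral gap of $M_c|_{\text{even}}$) to the genuine $L^2$-invertibility claimed. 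Once $0\in\rho(L|_{\text{odd}})$ is established in this way, the self-adjointness of $L$ from Lemma \ref{lem:spectrum_L} delivers the bounded inverse on $L^2_{\text{odd}}$ immediately, with the regularity gain then following from the elliptic estimate for $M_c$.
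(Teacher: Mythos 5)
Your reduction is accurate as far as it goes --- the parity bookkeeping, injectivity via Lemma \ref{kerL}, and the bounded invertibility of $M_c$ on the even subspace (kernel spanned by the odd function $Q_c'$, essential spectrum $[c,\infty)$) are all correct --- but the argument stops precisely at its decisive step, and that step cannot be supplied. You announce as ``the crux'' that oddness should force enough vanishing at $\xi=0$ to make the two antiderivatives land in $L^2$, i.e.\ that $0\in\rho(L|_{\text{odd}})$. Oddness, however, only makes $\hat f$ vanish at $\xi=0$ in a pointwise, rate-free sense; it does not prevent concentration of $L^2$-mass at zero frequency. Concretely, let $\phi$ be a fixed smooth bump supported in $(1,2)$ and define $u_n$ by $\hat u_n(\xi)=\mathrm{i}\operatorname{sgn}(\xi)\sqrt{n}\,\phi(n|\xi|)$. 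Then $u_n$ is real, odd, band-limited (so in $H^{2+\alpha}_{\text{odd}}(\R)$), with $\norm{u_n}_{L^2}\simeq 1$, while
\[
\norm{Lu_n}_{L^2}\leq \norm{\xi^2(|\xi|^\alpha+c)\hat u_n}_{L^2}+\norm{Q_c'\,\partial_x u_n}_{L^2}+\norm{Q_c\,\partial_x^2 u_n}_{L^2}\lesssim \frac{1}{n},
\]
since the symbol of the principal part vanishes at $\xi=0$, $\norm{\partial_x u_n}_{L^2}\lesssim n^{-1}$, $\norm{\partial_x^2 u_n}_{L^2}\lesssim n^{-2}$, and $Q_c,Q_c'\in L^\infty$. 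Hence no estimate $\norm{w}_{H^{2+\alpha}}\lesssim\norm{Lw}_{L^2}$ can hold on $H^{2+\alpha}_{\text{odd}}(\R)$: restriction to odd functions removes the kernel (the even function $Q_c$) but not the essential spectrum at the origin, so $0$ remains in $\sigma(L|_{\text{odd}})$ and the hoped-for quantitative version of ``oddness kills the zero mode'' is false. The coercivity $\langle Lw,w\rangle\gtrsim\norm{w_x}_{L^2}^2$ you propose to upgrade controls exactly the quantity that degenerates at $\xi=0$, which is what the sequence above exploits.

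For comparison, the paper's own proof is entirely soft: injectivity from Lemma \ref{kerL} plus self-adjointness give $\ker(L^*)=\{0\}$, hence the range of $L$ has trivial orthogonal complement, from which surjectivity is concluded and the open mapping theorem invoked. Note that this argument never engages the zero-frequency behaviour: $\ker(L^*)=\{0\}$ yields \emph{density} of the range, and for a self-adjoint operator density plus injectivity give a bounded everywhere-defined inverse only when $0\notin\sigma(L)$ --- precisely the point your proposal isolates, and which the Weyl sequence shows to fail (consistently, the paper itself remarks before Theorem \ref{lyap} that the essential spectrum of $\mathcal{L}$ contains zero). So your instinct about where the analytic difficulty sits is sharper than the two-line proof you were asked to reconstruct, but your completion strategy cannot close the gap, and neither does the paper's passage from dense range to surjectivity. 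What the subsequent application (hypothesis (iv) of Theorem \ref{lyap}) actually requires is solvability of $Lw=\partial_x g$ for the special right-hand sides produced by $\mathcal{N}$, whose exact-derivative structure permits the first antiderivative to be taken exactly ($\partial_x^{-1}\partial_x g=g$); a correct quantitative statement must exploit that structure, or weighted spaces encoding decay, rather than parity alone.
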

\begin{proof}
	We know from Lemma \ref{kerL} that $L$ is one-to-one, and as it is a self-adjoint operator we deduce from the same lemma that
	\begin{equation*}
		\{0\}=\text{ker}(L)=\text{ker}(L^*)=\text{span}(L)^\perp,
	\end{equation*}
	so $\text{codim}(\text{ran}(L))=0$, hence $L$ maps $H_{\text{odd}}^{2+\alpha}(\mathbb{R})$ onto $L_{\text{odd}}^2(\mathbb{R})$. By the open mapping theorem, $L^{-1}$ is a bounded linear operator from $L_{\text{odd}}^2(\mathbb{R})$ to $H_{\text{odd}}^{2+\alpha}(\mathbb{R})$.
\end{proof}
We are now ready to check the remaining hypotheses (i), (ii), (iii), and (iv) of Theorem \ref{lyap}.

It is immediate that $\mathcal{L}$ is densely defined and $\mathcal{L}$ is a closed operator, since the resolvent set is nonempty according to Proposition \ref{spectrum_of_L}. Thus condition (i) of Theorem \ref{lyap} holds true.
Hypothesis (ii) follows from the fact that $\mathcal{N}$ is a smooth operator from $H_{\text{odd}}^{2+\alpha}(\mathbb{R})\rightarrow H_{\text{odd}}^\alpha(\mathbb{R})$, and it is clear that $N(0)=\mathrm{d}N[0]=0$.
To show that hypothesis (iii) is satisfied we define the involution
\begin{equation*}
	S\colon \left(\begin{array}{c}
		W_1\\
		W_2
	\end{array}\right)\rightarrow \left(\begin{array}{c}
		-W_1\\
		W_2
	\end{array}\right).
\end{equation*}
We have that $S\mathcal{L}W=\mathcal{L}SW$, $S\mathcal{N}(W)=-\mathcal{N}(SW)$. So equation \eqref{dynsys1} is reversible with reverser $S$ and hence hypothesis (iii) is satisfied.
 Eventually, in order to verify the solvability condition in hypothesis (iv) we consider the equation
\begin{equation}\label{solvability-eq}
	\mathcal{L}W=-\mathcal{N}(W^*),
\end{equation}
for $W^*\in H_{\text{odd}}^{2+\alpha}(\mathbb{R})$. Equation \eqref{solvability-eq} is equivalent with
\begin{align}\label{solvability-eq-2}
	LW_2=-\frac{1}{2}\left(({W_2^*}_x)^2\right)_x,\qquad
	W_1=0.
\end{align}
Since $\left(({W_2^*}_x)^2\right)_x\in H_{\text{odd}}^\alpha(\mathbb{R})$ for any $W_2^*\in H_{\text{odd}}^{2+\alpha}(\mathbb{R})$, we directly deduce from Lemma \ref{L-inverse} that \eqref{solvability-eq-2} is solvable with solution $-\frac{1}{2}L^{-1}\left[\left(({W_2^*}_x)^2\right)_x\right]$, which is a smooth mapping from $H_{\text{odd}}^{2+\alpha}(\mathbb{R})$ to $H_{\text{odd}}^{2+\alpha}(\mathbb{R})$.

Since all the hypotheses of Theorem \ref{lyap} are fullfilled, we have proved Theorem \ref{thm:main1}.

\bigskip
\setcounter{equation}{0}
\section{Transverse instability for the fractional KP-I equation}\label{S:2}

The KP equation is derived to investigate the stability of solitary wave solutions of the KdV equation, i.e. the line solitary wave solutions of the KP equation, with respect to the perturbations which depend also to the variable in the transverse direction of the wave propagation. Therefore a natural question arises for the stability of solitary wave solutions of fKdV equation under transverse effects. The current section is devoted to the study of transverse instability for the line solitary wave solutions of the fKP-I equation.

\setcounter{equation}{0}
\subsection{Formulation of the transverse instability problem} 

First, we formulate the transverse instability problem for the fKP-I equation. Let $Q_c$ be a solitary wave solution of the fKdV equation, then it is a line solitary wave of solution of the fKP equation.
In the  moving coordinate frame $x\mapsto x-ct$, the solution $Q_c$ of the fKP-I equation becomes the stationary wave solution of
\begin{equation}\label{mf-fkp}
 (u_t-cu_x+uu_x-D^\alpha u_x)_x- u_{yy}=0.
\end{equation}
We consider a perturbation of $Q_c$ by $\psi$ as
\begin{equation}\label{pert-sol}
  u(x,y,t)=Q_c(x)+\psi(x,y,t),
\end{equation}
where $\psi(\cdot,y,t)\in L^2(\R)$ for each $(y,t)\in \R^2$ and $\psi(x,\cdot,t)\in L^\infty(\R)$ for each $(x,t)\in \R^2$.
Linearizing \eqref{mf-fkp} about $Q_c$ gives  the equation
\begin{equation}\label{pert-sol}
  (\psi_t-c\psi_x+(Q_c\psi)_x-D^\alpha \psi_x)_x-\psi_{yy}=0.
\end{equation}
Using the ansatz $\psi(x,y,t)=e^{\lambda t}\partial_xw(x,y)$ in \eqref{pert-sol}  and integrating we obtain the eigenvalue problem
\begin{equation*}
-\lambda \partial_x w=-\partial_x(D^\alpha+c-Q_c)\partial_x w - \partial_y^2 w.
\end{equation*}
Notice that the equation above is autonomous in $y$. Hence, taking the Fourier transformation with respect to $y$ we can rewrite the eigenvalue problem as
\begin{equation}\label{eq:eigenvalue_problem}
\lambda Aw_k=L(k) w_k,
\end{equation}
with
\[
A:=-\partial_x,\qquad \mbox{and}\qquad	{L}(k):=-\partial_x(D^\alpha+c-Q_c)\partial_x+ k^2, \qquad k\in \R.
\]
We thus formulate our transverse instability problem:
\begin{definition}[Linear transverse instability]\label{def}
	\emph{
The traveling wave solution $Q_c$ of the fKdV equation is \emph{linearly unstable with respect to transverse perturbations}, if there exists  $\lambda \in \mathbb{C}$ with $\mbox{{Re}}\, \lambda>0$ and
$(k,w_k)\in \R\setminus\{0\}\times H^{2+\alpha}(\R)$
solving the eigenvalue problem \eqref{eq:eigenvalue_problem}.
}
\end{definition}

\subsection{Proof of Theorem \ref{thm:main2}}

In order prove Theorem \ref{thm:main2}, i.e. to show that there exists $\lambda \in \mathbb{C}$ with $\mbox{Re}\, \lambda >0$ and $(k, w_k)\in \R \setminus\{0\}\times H^{2+\alpha}(\R)$ solving the eigenvalue problem \eqref{eq:eigenvalue_problem},
we apply a criterion formulated by { Rousset and Tzvetkov} \cite{rousset}. By Definition \ref{def} this yields the linear transverse instability result.
According to \cite{rousset}, the one dimensional solitary wave $Q_c$ is transversely unstable under the evolution of the fKP-I equation, if the following conditions are satisfied:
\begin{itemize}
\item[(A0)] For any $k\geq 0$, the operator ${L}(k):H^{2+\alpha}(\R)\to L^2(\R)$ is self-adjoint.
\item[(A1)] There exists a $k_0>0$ and an $\beta >0$ such that $L(k)\geq\beta I$ for all $|k|\geq k_0$.
\item[(A2)] $\sigma_{ess}(L(k))\subset [c_k, \infty)$ for some $c_k>0$ and $~k\neq 0$.
\item[(A3)] $L(k_1)\geq L(k_2)$ for every $k_1\geq k_2\geq 0$. In addition if for some $k>0$ and $w\neq 0$
we have $L(k)w=0$ then $\langle L'(k)w,w\rangle \geq 0$.
\item[(A4)] $L(0)$ has a simple negative eigenvalue and the rest of the spectrum is included in $[0, \infty)$.
\end{itemize}

Recall that the operator $L(0)$ equals the operator $L$ considered in Lemma \ref{lem:spectrum_L}. Therefore, we already know that $L(k)$ is a self-adjoint operator for any $k\geq 0$ by being a bounded perturbation of the self-adjoint operator $L=L(0)$. Thus (A0) is satisfied.
In order to show (A1), that is that there exists a $k_0>0$ such that $L(k)\geq \beta I$ for all $|k|\geq k_0$, we apply integration by parts and evaluate the inner product
\begin{align*}
\langle L(k)w,w\rangle = \int_{\R} (D^\alpha w_x)w_x +(c-Q_c)(w_x)^2+k^2w^2\, dx.
\end{align*}
A simple calculation shows that $\|Q_c\|_\infty\geq c$ and therefore the second term in the above integral  is not necessarily positive. Using Plancherel's identity we obtain
\begin{align*}
\langle L(k)w,w\rangle \geq \|D^\frac{\alpha}{2} w_x\|^2_{L^2}-|c-Q_c|\|w_x\|^2_{L^2} +k^2\|w\|^2_{L^2}.
\end{align*}
In the view of Lemma \ref{lemmaA}, we can estimate
\[
\|c-Q_c\|_\infty\|w_x\|^2_{L^2} \leq \e \|D^{\frac{\alpha}{2}}w_x\|^2_{L^2} + c(\e)\|w\|^2_{L^2},
\]
where $\e>0$ can be chosen small on the cost of $c(\e)$.
Hence,
\[
\langle L(k)w,w\rangle \geq (1-\e)\|D^\frac{\alpha}{2} w_x\|^2_{L^2}+\left(k^2-c(\e)\right)\|w\|^2_{L^2}.
\]
Therefore we conclude that there exists $k_0>0$, such that
\[
\langle L(k)w,w\rangle \geq \langle w,w\rangle,
\]
and condition (A1) is proved.
Since $Q_c\in H^{1+\alpha}(\R)$, the operator $K_{Q_c}:H^{2+\alpha}(\R)\to L^2(\R)$ defined by $K_{Q_c}w:=-\partial_x(Q_cw_x)$ is compact. Hence,  $L(k)$ is a compact perturbation of $L_0(k):=-\partial_x(D^\alpha+c)\partial_x +k^2$ and their essential spectra coincide, that is
\[
	\sigma_{ess}(L(k))=\sigma_{ess}(L_0(k))=[k^2,\infty).
\]
Thus, condition (A2) is satisfied.
Condition (A3) is trivially fulfilled in the view of
\[
	L^\prime(k)=2k>0\qquad \mbox{for}\quad k>0.
\]
Finally condition (A4) is already shown to hold true in Lemma \ref{lem:spectrum_L}.

\bigskip
\section{Numerical results on transverse (in)stability } \label{S:3}

Theorem \ref{thm:main2} gives an analytical result for the spectral instability of the line solitary waves under the fKP-I evolution. For the fKP-II equation, it was shown that the line solitary waves are stable \cite{mizumachi1, mizumachi2} in the case of $\alpha=2$. However, we do not have any analytical result for general $\alpha$.
In this section we perform some experiments to support the analytically obtained instability for fKP-I equation and to investigate the (in)stability for fKP-II equation numerically. 

\subsection{Numerical Method}
Considering the fractional derivatives in the equation we use a Fourier pseudo-spectral method to investigate the time evolution of the solutions of fKP equaion. The Fourier transform  of the equation \eqref{fkp-general} is
\begin{equation}\label{ft-of-fkp}
\hat{u}_t+i\left(\frac{\sigma k_y^2}{k_x}-k_x|k_x|^\alpha \right)\hat{u}+i\frac{k_x}{2}\widehat{u^2}=0,
\end{equation}
where $\hat{u}$ denotes the Fourier transform of $u$ with respect to the two-dimensional space variable $(x,y)$ and $k_x$, $k_y$ are wave numbers in the $x$ and $y$ directions respectively. To evaluate the term $i/k_x$ for $k_x=0$ we regularize it as $i/(k_x+i\lambda)$ where $\lambda=2.2\times 10^{-16}$ \cite{klein1, klein2}. To solve it numerically, we approximate the equation \eqref{ft-of-fkp} by a  discrete Fourier transform. For that aim, we assume that the solution has periodic boundary conditions on the truncated domains $x\in [-L_x, L_x]$ and $y \in  [-L_y, L_y]$. To compute the discrete Fourier transform and its inverse, we use the MATLAB functions \enquote{fft2} and \enquote{ifft2} respectively. Due to the stiffness of the resulting system of ODEs, very small time steps are required while using the explicit numerical methods. In \cite{klein3} the authors compare the performance of several fourth-order time stepping schemes for the KP-equation. They show that, implicit schemes are  very expensive computationally and the most effective methods are exponential time differencing (ETD) methods. In this study we apply the fourth-order ETD scheme proposed by Cox and Matthews \cite{cox} for the time integration of the ODE \eqref{ft-of-fkp}. 
The method is used in several studies for the KP equation \cite{klein1, klein2, klein3}.

To test the efficiency of the scheme we consider the exact line solitary wave solution
\begin{equation}\label{kdvsolution}
  u_{KdV}(x,t)=3\,c\,\mbox{sech}^2 \left(\tfrac{\sqrt{c}}{2}(x-x_0-ct)\right)
\end{equation}
for the KP equation where $\alpha=2$ in \eqref{fkp-general}. To show that the numerical scheme captures the exact solution, we investigate the time evolution of the initial data
\begin{equation*}\label{ukdv0}
   u_{KdV}(x,0)=6\,\mbox{sech} \left(\tfrac{x}{\sqrt{2}}\right),
\end{equation*}
corresponding to the initial profile of the exact solution \eqref{kdvsolution}  with $c=2$ and $x_0=0$.
Here we chose the space intervals as $x\in [-60, 60]$ and  $y\in  [-30, 30]$ with the number of grid points $N_x=2^{9}$ for the $x$ direction and $N_y=2^{7}$ for the $y$ direction.  The time interval is $t\in [0, 10]$ with time step $dt=10^{-3}$. We observe that the  $L_\infty-$norm of the difference of numerical and exact solutions is approximately of order $10^{-9}$ at time $t=10$ for both fKP-I and fKP-II equations.
We  control the change in the conserved quantity
\begin{equation*}\label{mass}
  M(u)=\int_{\mathbb{R}^2}u^2\,d(x,y)
\end{equation*}
of the fKP equation as a numerical check. We evaluate the relative conservation error
\begin{equation*}\label{cons-err}
 \mbox{error}(t)=1-\frac{M(u(t)))}{M(u(0))},
\end{equation*}
at each time step \cite{klein2}. The error  is approximately of order $10^{-11}$ at time $t=10$ for both KP-I and KP-II equations.
Even though the error in the  mass conservation  gets worse for longer time intervals, as a control of the numerical accuracy, we make sure that it is of order less than  $10^{-4}$ in all experiments.

Since we do not have the exact solutions for the general values of $\alpha$ we construct the solitary wave solutions of the fKdV equation numerically by using the Petviashvili iteration  method. This method has been widely used for fractional equations \cite{amaral, duran,le, oruc, pelinovski}. We refer to \cite{duran} and \cite{pelinovski} for details of the method while generating solitary wave solutions of the fKdV equation. In Figure \ref{fig:profiles}, we present the numerically generated line solitary wave profiles of the fKdV equation for several $\alpha$ values with wave speed $c=2$ and for various values of $c$ when $\alpha=1.5$. Here the space interval is $x\in [-100, 100]$ with the number of grid points $N_x=2^{12}$. The same values are also used for the time evolution of numerically generated waves with $y\in [-30, 30]$, $N_y=2^{7}$ and $dt=10^{-4}$. Here the aim is to make sure that the solution is at most of order
$10^{-4}$ at the boundary for the generated solitary wave, so that the necessary periodic boundary conditions for the Fourier spectral method are satisfied.

\begin{figure}[h!]
 \begin{minipage}[h]{0.45\linewidth}
   \includegraphics[scale=0.6]{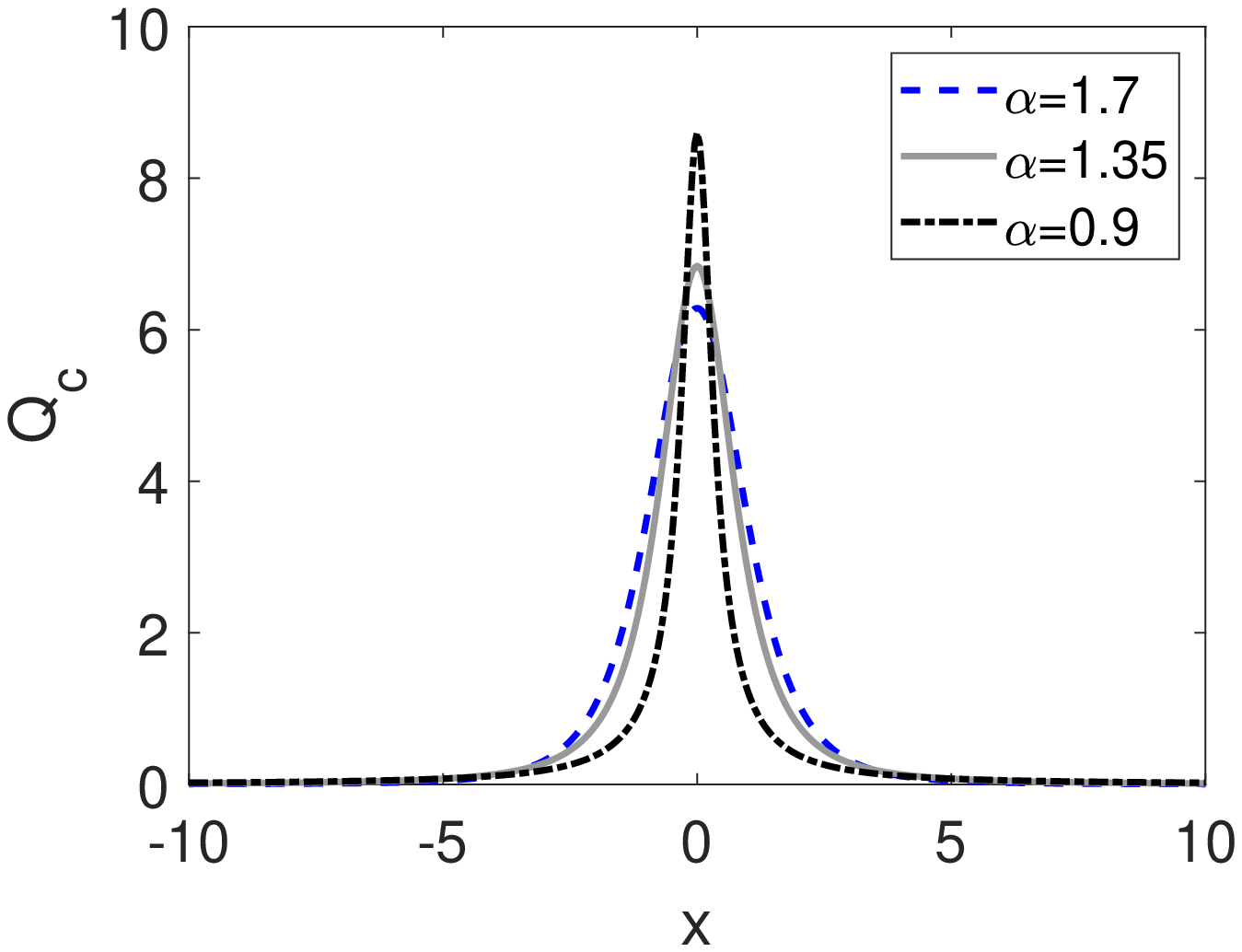}
 \end{minipage}
\hspace{30pt}
\begin{minipage}[h]{0.45\linewidth}
   \includegraphics[scale=0.6]{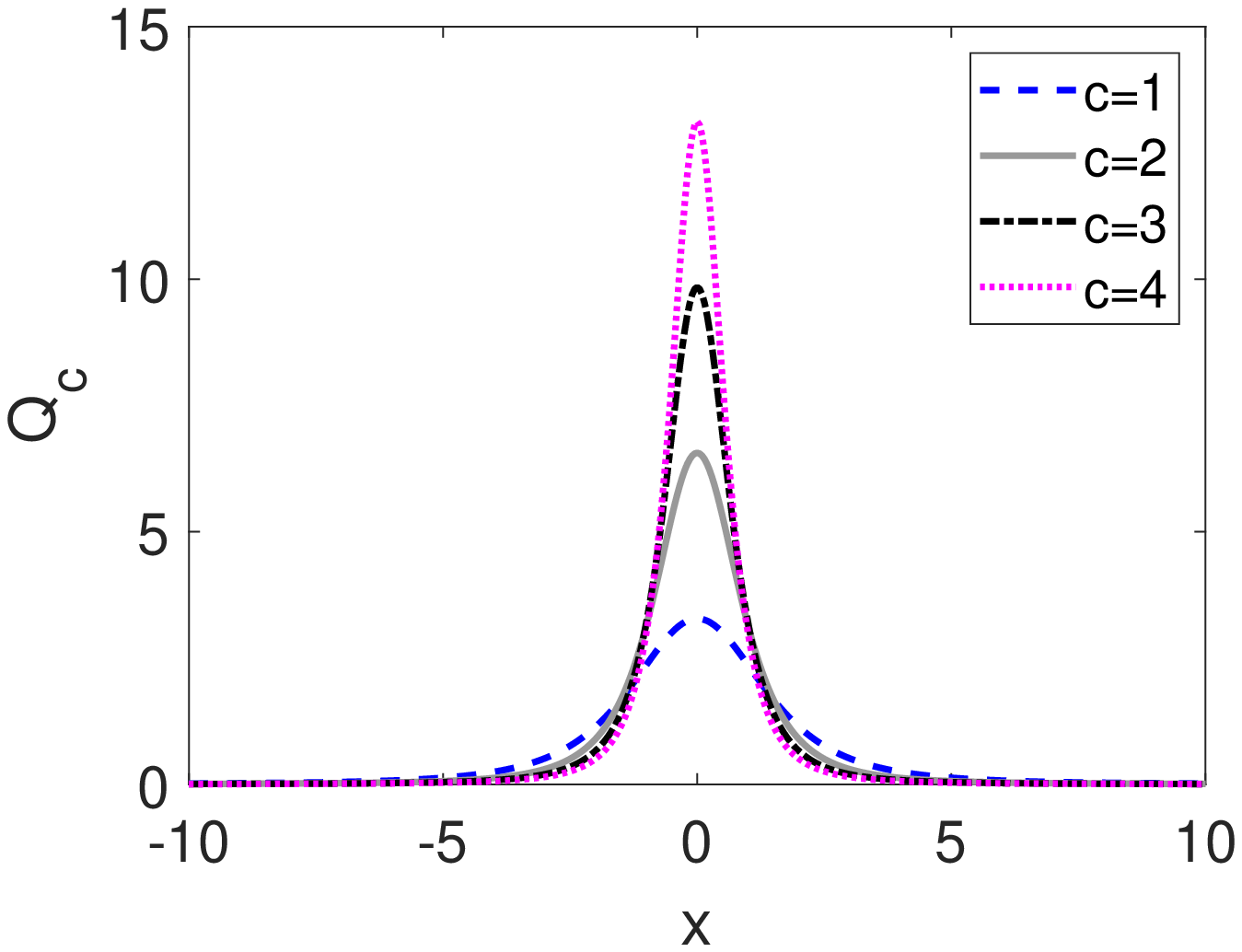}
 \end{minipage}
  \caption{Numerically generated line solitary wave profiles of fKdV equation for: $c=2$ with various values of $\alpha$ (left panel), and
  $\alpha=1.5$ with various values of $c$ (right panel).}
  \label{fig:profiles}
\end{figure}

	To investigate the transverse (in)stability of line solitary wave solution $Q_c$ of the fKP equation the natural initial condition should be
	\begin{equation*}\label{initial}
		u_0=Q_c(x)+\psi(x,y),
	\end{equation*}
	where the perturbation $\psi(x,y)$ is either localized in $x$ and $y$ or localized in $x$ and $y$-periodic \cite{klein1, molinet2}. 
For $\alpha=2$ we use \eqref{kdvsolution} at $t=0$ as the unperturbed solution $Q_c$. For the other  values of $\alpha$, $Q_c$ is the solution generated by Petviashvili iteration method. In the following experiments we use $c=2$ unless otherwise stated.
	
	For the choice of perturbation, we first recall that for the  fKP equations the zero mass constraint
	\begin{equation}\label{constraint1}
		\int_{-\infty}^{\infty} u(x,y,t)dx=0, ~~~~~~y \in\mathbb{R}, ~~~t\neq 0,
	\end{equation}
	is satisfied at any time $t>0$ even if it is not satisfied initially \cite{klein1, molinet1}. To ensure that the constraint is satisfied we impose it also on the initial data. As explained in \cite{klein2} an initial data that does not satisfy \eqref{constraint1} will yield a solution that is continuous but no longer differentiable in time. For the periodic setting in numerical implementation we impose the constraint on the initial data as
	\begin{equation*}
		\int_{-L_x}^{L_x} u_0(x,y) dx=0,
	\end{equation*}
	where $2L_x$ is the period in $x$.

\subsection{Numerical experiments for the fKP-I equation}

The aim of our numerical investigation concerning the transverse (in)stability of line solitary solutions for the fKP-I equation is twofold. On the one hand, we are going to use fully localized perturbations to demonstrate numerically the transverse instability of line solitary solutions, thereby supporting our analytical result in Theorem \ref{thm:main2}, see Subsection \ref{ss:l}. On the other hand, keeping in mind that for the classical KP-I equation there exists a critical speed $c^*=\frac{4}{\sqrt{3}}$ for which the line solitary solultion is unstable for supercritical speeds $c>c^*$, while being stable for subcritical speeds $c<c^*$ (cf. \cite{rousset3}), we aim to investigate this phenomenon for the fKP-I equation numerically. To this end, we will use a perturbation, which is localized in $x$-direction and periodic in $y$-direction, see Subsection \ref{ss:p} .

\medskip

\subsubsection{Fully localized perturbations}\label{ss:l}

Considering  the zero-mean constraint in \eqref{constraint1}, we perturb the line solitary solution by the function
\begin{equation}\label{pert}
	\psi_1(x,y)=A(x+x_0)\exp(-(x+x_0)^2-y^2),
\end{equation}
where $A$ is determined such that the maximum of the perturbation is $1/10$ of the amplitude of the unperturbed solution and $x_0$ denotes the $x$-coordinate of the location of the maximum of  the unperturbed solution.

First we consider the classical KP-I equation, where $\alpha=2$ in \eqref{fkp-general}.  In Figure \ref{KP1alpha2},  we present the evolution of the perturbed solution  at several times and the change of the $L^\infty$-norm in time. Here, we choose $x_0=10$ in \eqref{kdvsolution} and \eqref{pert} initially. 

The next experiment is on  the fractional case where $\alpha=1.7$.  Figure \ref{KP1alpha17} shows the perturbed solution of the fKP-I equation  at $t=9$. We do not display the wave at $t=0$ as they look similar for all $\alpha$ values. As above, the maximum of the wave is located at $x_0=10$ initially. A similar behaviour as in  the previous experiment is observed here. The perturbation evolves to several localized peaks which grow by time.

\begin{figure}[H]
	\begin{minipage}[t]{0.45\linewidth}
		\includegraphics[width=3.1in,height=2.3in]{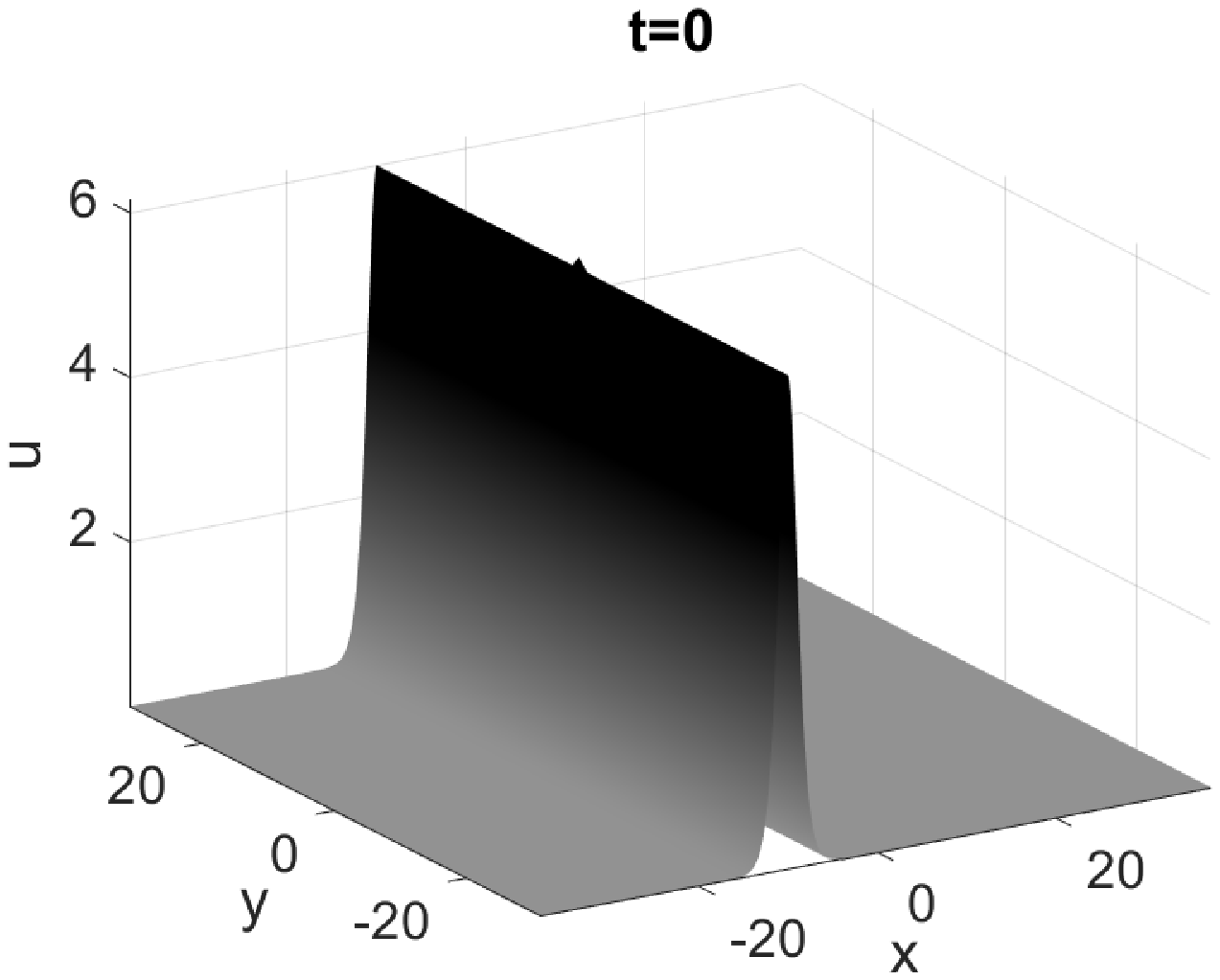}
	\end{minipage}
	\hspace{30pt}
	\begin{minipage}[t]{0.45\linewidth}
		\includegraphics[width=3.1in,height=2.3in]{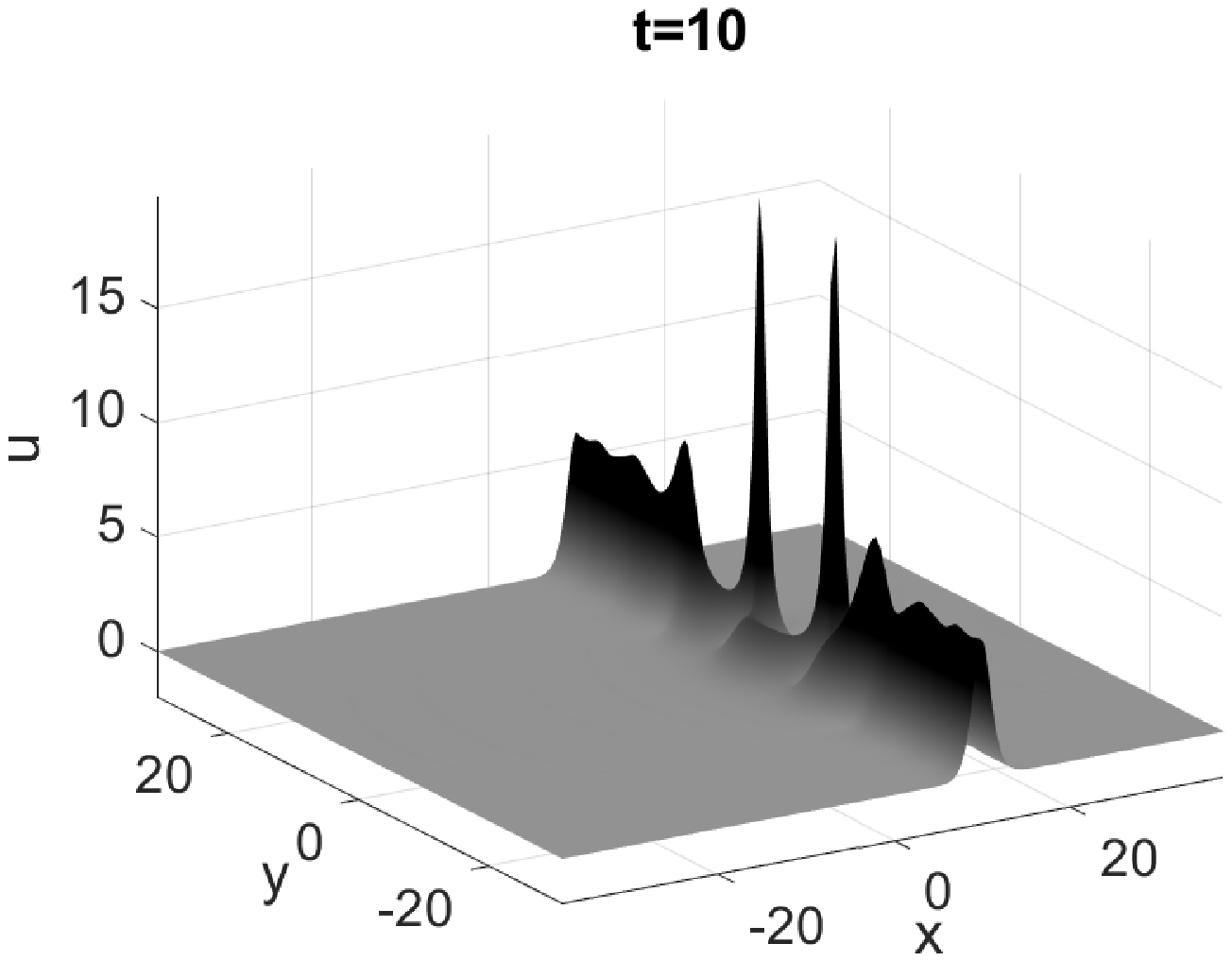}
	\end{minipage}
\end{figure}
\begin{figure}[H]
	\begin{minipage}[t]{0.45\linewidth}
		\includegraphics[width=3.1in,height=2.3in]{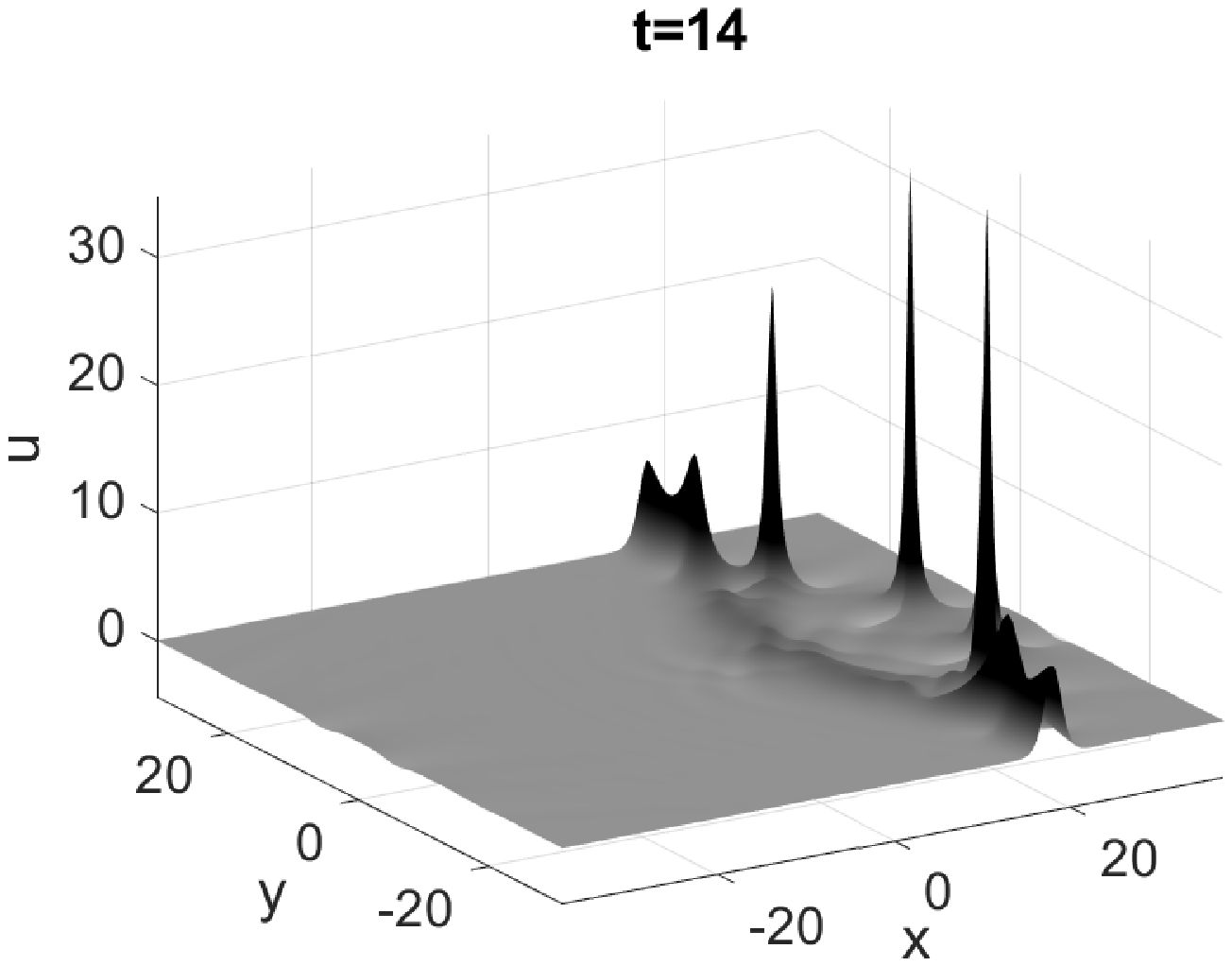}
	\end{minipage}
	\hspace{30pt}
	\begin{minipage}[t]{0.45\linewidth}
		\includegraphics[width=3.1in,height=2.3in]{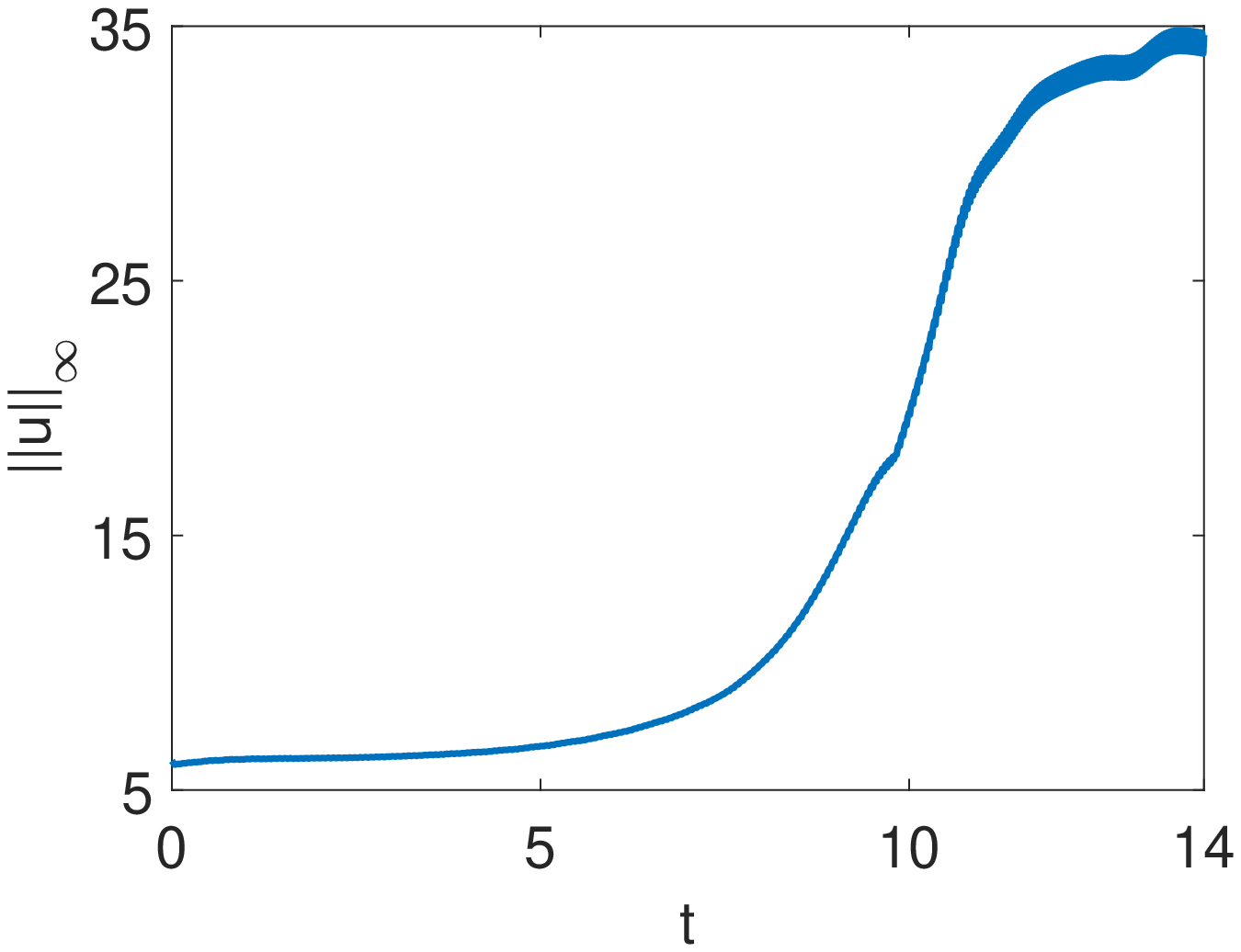}
	\end{minipage}
	\caption{Time evolution of perturbed solution of the fKP-I equation when $\alpha=2$, $c=2$ and the change of the $L^\infty$-norm in time.}
	\label{KP1alpha2}
\end{figure}

 \begin{figure}[H]
	\begin{minipage}[t]{0.45\linewidth}
		\includegraphics[width=3.1in,height=2.3in]{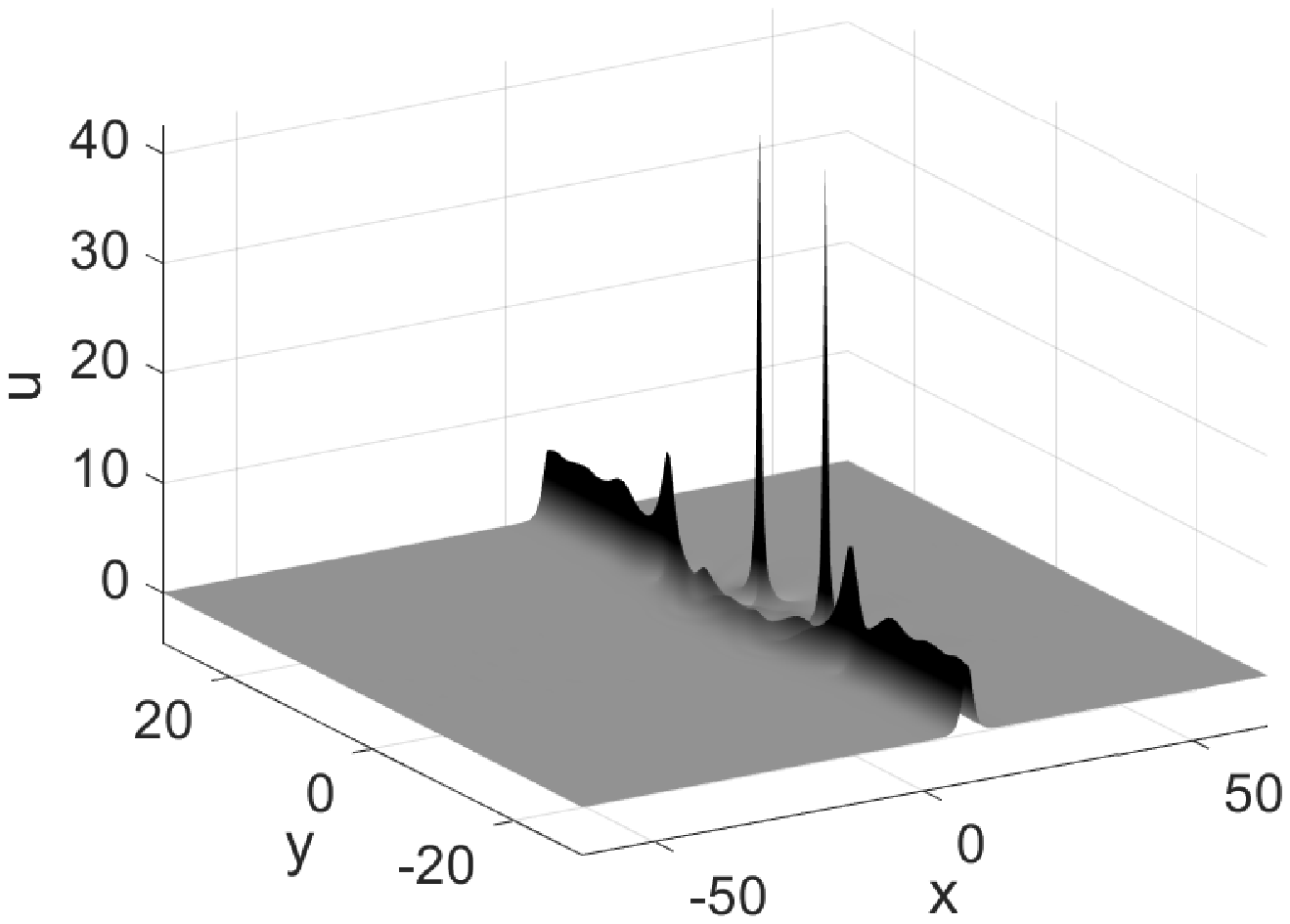}
	\end{minipage}
	\hspace{30pt}
	\begin{minipage}[t]{0.45\linewidth}
		\includegraphics[width=3.1in,height=2.3in]{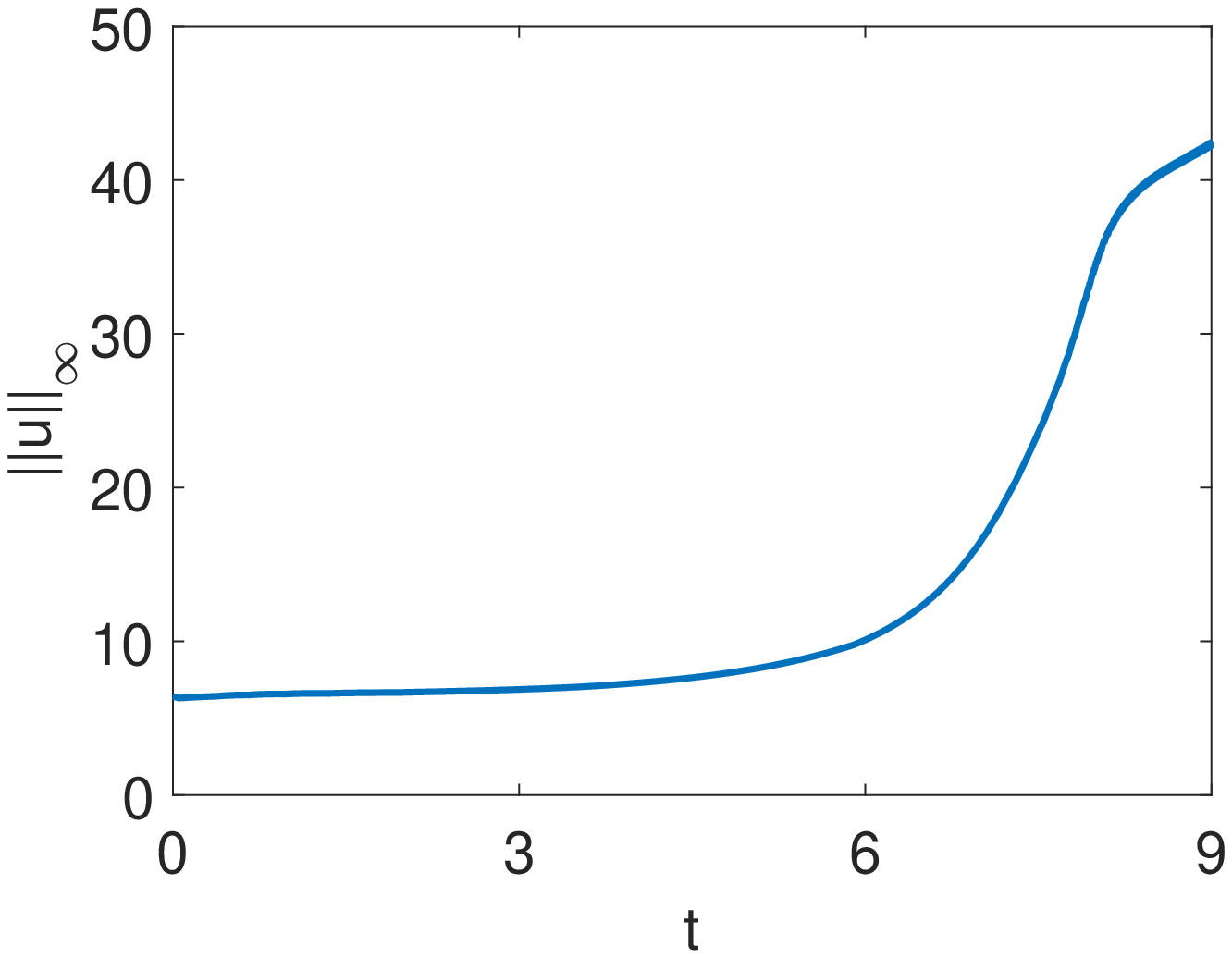}
	\end{minipage}
	\caption{Perturbed solution of the fKP-I equation at $t=9$ when $\alpha=1.7$, $c=2$ (left panel)
		and the change of the $L^\infty$-norm in time (right panel).}
	\label{KP1alpha17}
\end{figure}

Figure \ref{KP1alpha135}  depicts the wave for the fKP-I equation with $\alpha=1.35$. Here the $\alpha$ value is just above the $L^2$-critical value $\alpha=4/3$. We then present a case where $\alpha$ is $L^2$-supercritical. Figure \ref{KP1alpha09} shows the fKP-I equation with $\alpha=0.9$.

\begin{figure}[H]
	\begin{minipage}[t]{0.45\linewidth}
		\includegraphics[width=3.1in,height=2.3in]{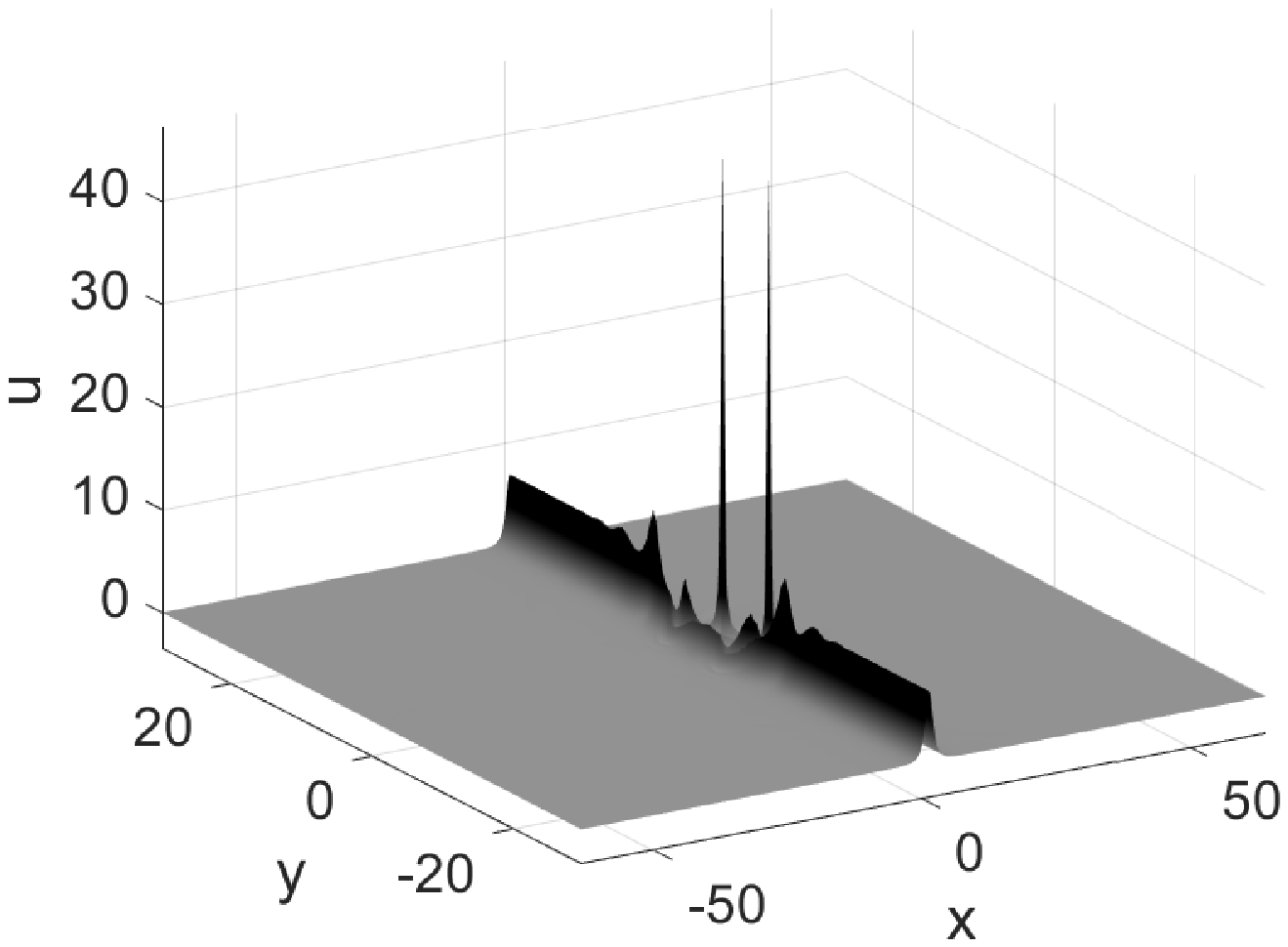}
	\end{minipage}
	\hspace{30pt}
	\begin{minipage}[t]{0.45\linewidth}
		\includegraphics[width=3.1in,height=2.3in]{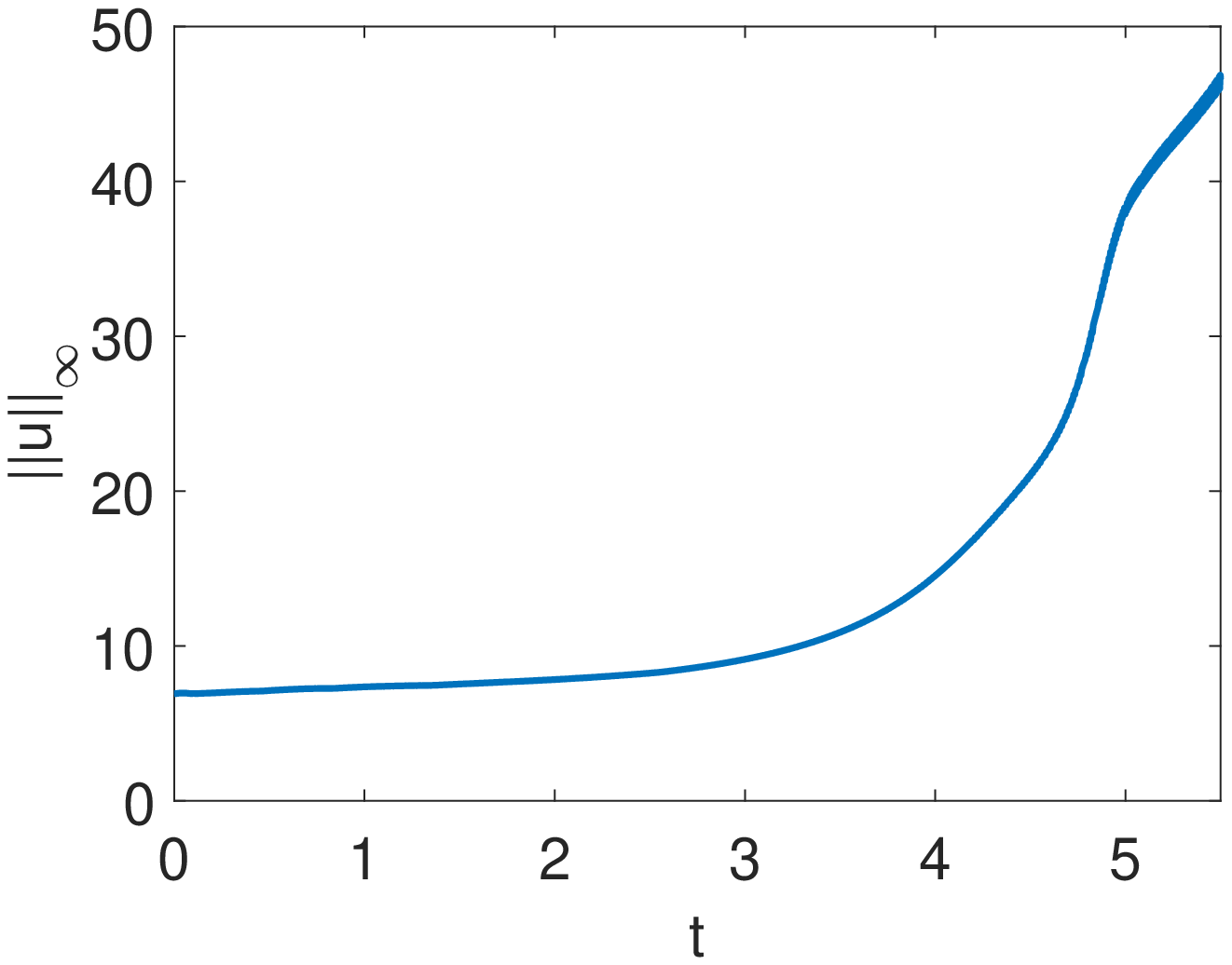}
	\end{minipage}
	\caption{Perturbed solution of the fKP-I equation at $t=5.5$ when $\alpha=1.35$, $c=2$ (left panel) and the change of the $L^\infty$-norm in time (right panel).}
	\label{KP1alpha135}
\end{figure}

\begin{figure}[H]
	\begin{minipage}[t]{0.45\linewidth}
		\includegraphics[width=3.1in,height=2.3in]{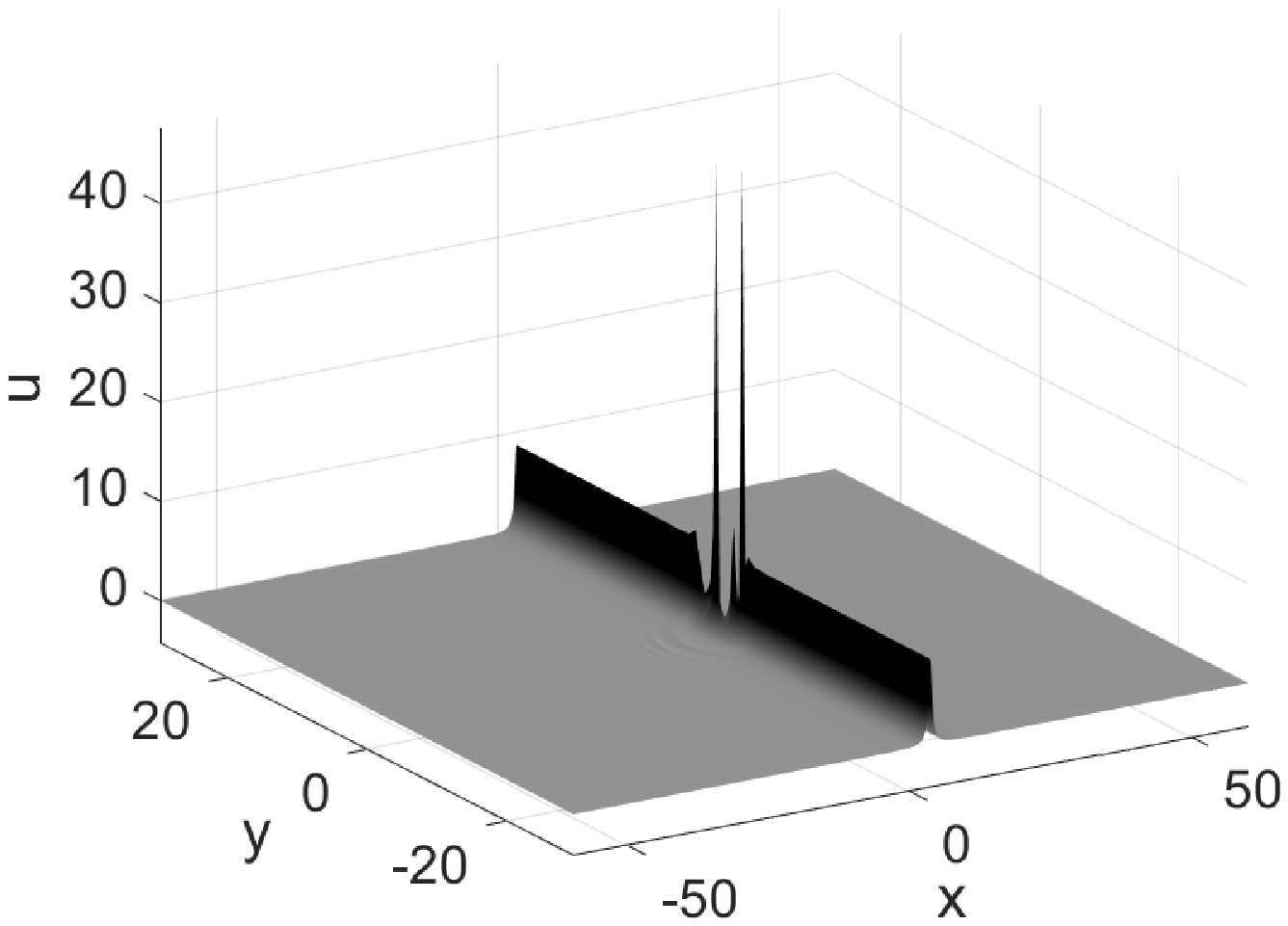}
	\end{minipage}
	\hspace{30pt}
	\begin{minipage}[t]{0.45\linewidth}
		\includegraphics[width=3.1in,height=2.3in]{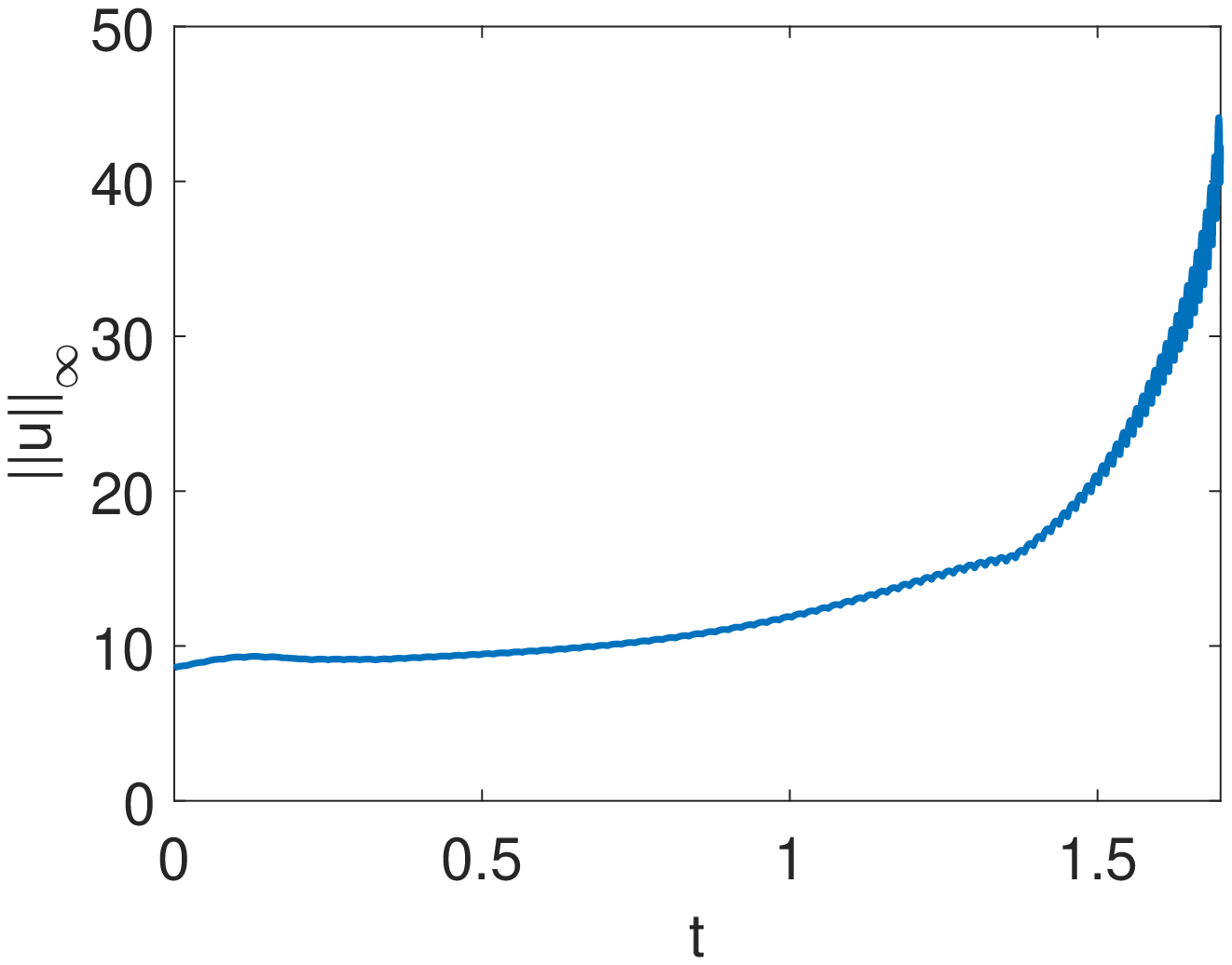}
	\end{minipage}
	\caption{Perturbed solution of the fKP-I equation  at $t=1.7$ when $\alpha=0.9$, $c=2$ (left panel) and the change of the $L^\infty$-norm in time (right panel).}
	\label{KP1alpha09}
\end{figure}

Theorem \ref{thm:main2} states the linear transverse instability of line solitary wave solution under the fKP-I flow when $\frac{1}{3}< \alpha <2$.   The numerical experiments clearly support the analytically obtained instability result.
In all examples  the initial perturbation evolves into localized peaks which also move in time. We can see that the peaks move faster than the wave itself for all $\alpha$ values. Even though it is not very visible for $\alpha=0.9$ we can confirm that the location of the peaks are ahead of the line solitary wave.
For $\alpha=2$ and  $\alpha=1.75$ we observe that the $L^\infty$-norm increases and converges to a plateau level after some time. This time may be considered where the peaks start to develop a lump formation. Similar behaviour has been observed for the KP-I equation in \cite{klein1}. For smaller $\alpha$ values  the error in the conserved quantity increases by the increasing time in the numerical experiments and we are not able to perform the numerical experiments for larger time values. Therefore the lump-formation  is not observed for small $\alpha$ values but the increasing  $L^\infty$-norm indicates a strong instability.

The next experiment is to understand  the relation between the \enquote{blow-up} time with both $\alpha$ and $c$. Figure \ref{time} shows the times where the $L^{\infty}$-norm of the solution is twice as large as the $L^{\infty}$-norm of the initial wave. We first fix $c=2$ and consider several values of $\alpha$. Then we fix $\alpha=1.5$ and consider various $c$ values. Here we note that choosing  same $A$ in \eqref{pert} for every $\alpha$ and $c$ slightly changes the graphs. 

\begin{figure}[H]
	\begin{minipage}[t]{0.45\linewidth}
		\includegraphics[width=3.1in,height=2.3in]{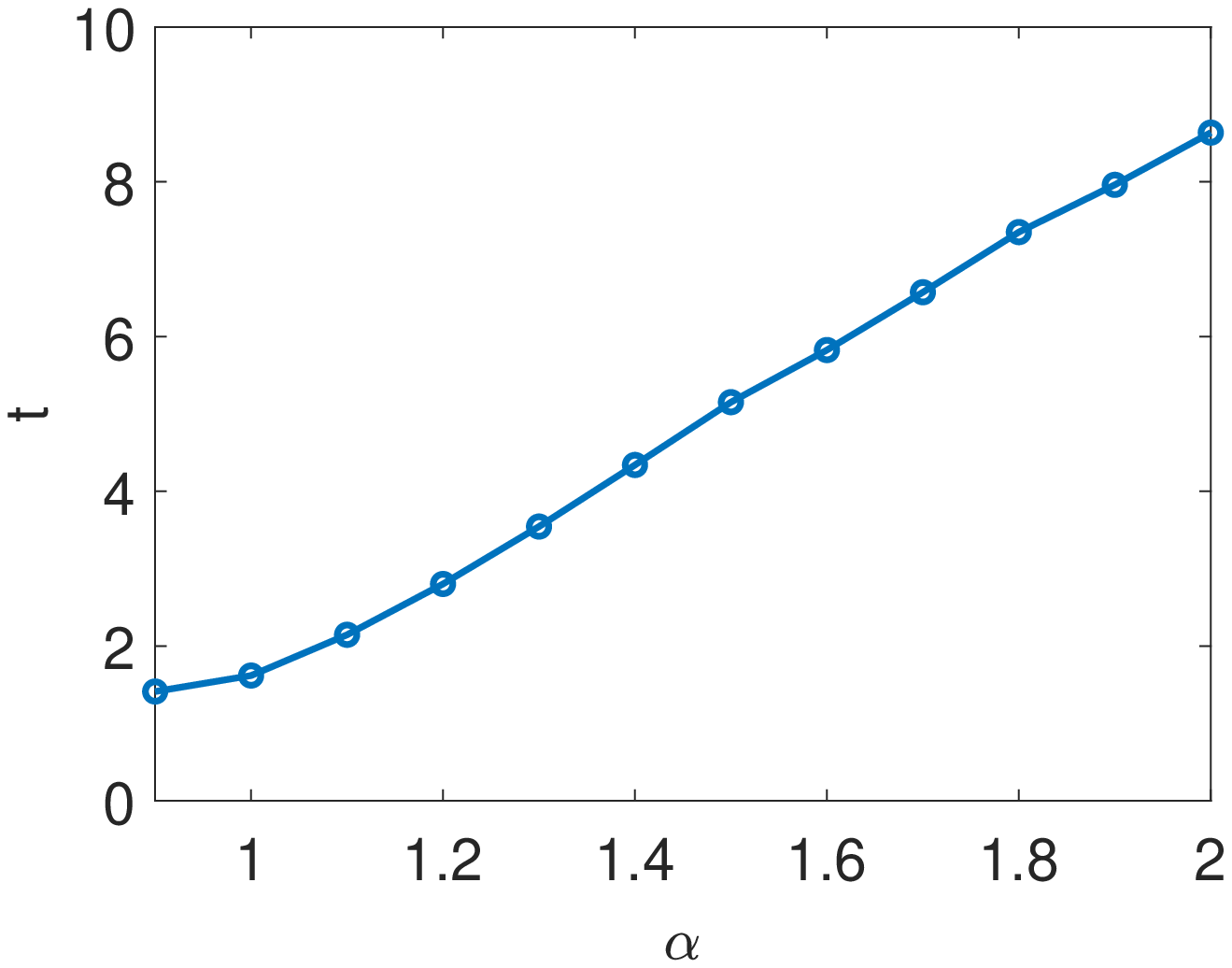}
	\end{minipage}
	\hspace{30pt}
	\begin{minipage}[t]{0.45\linewidth}
		\includegraphics[width=3.1in,height=2.3in]{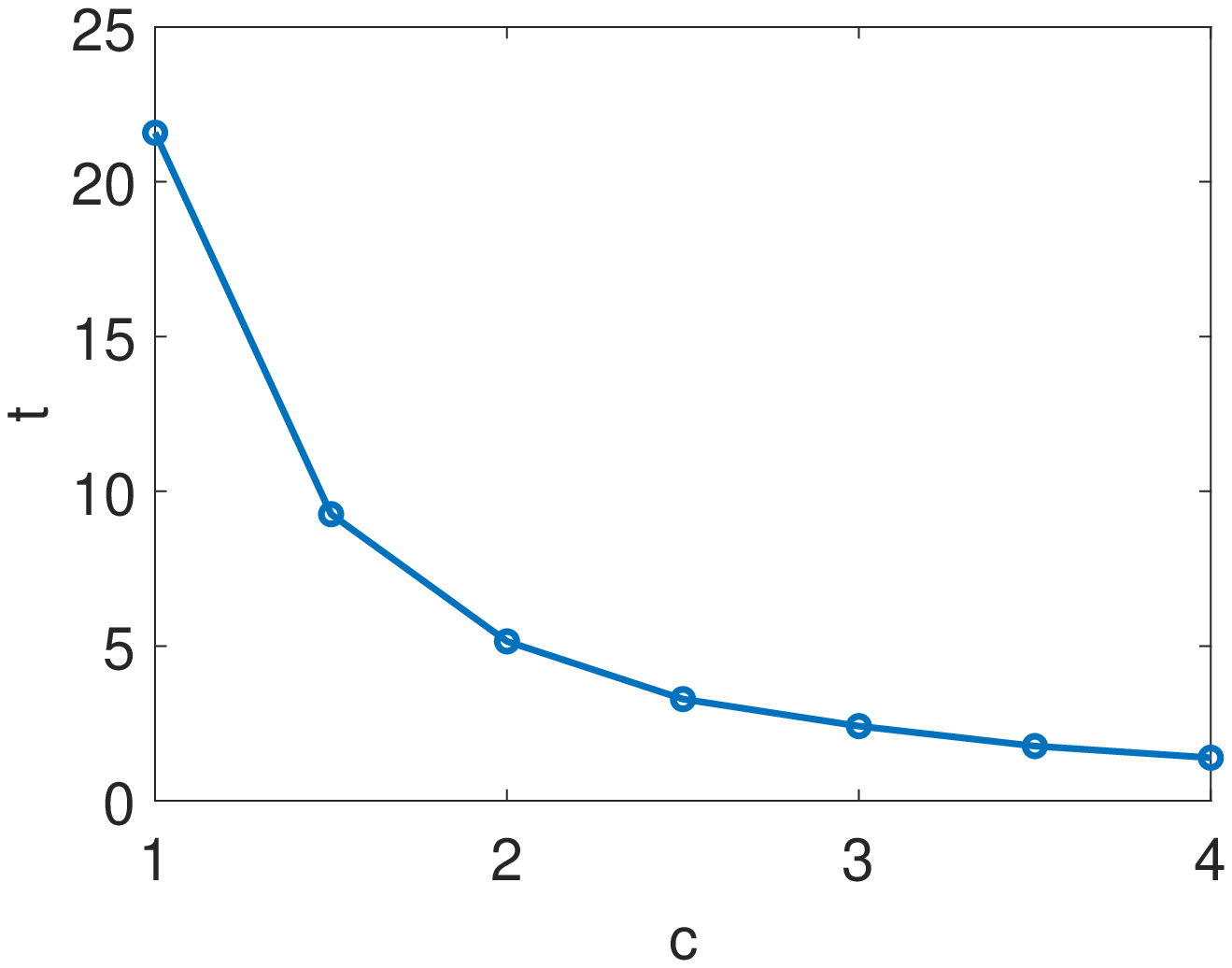}
	\end{minipage}
	\caption{The time that the amplitude of the perturbed solution doubles for various $\alpha$ values when $c=2$ (left panel) and for various values of $c$ when $\alpha=1.5$
		(right panel).}
	\label{time}
\end{figure}

 \medskip

\subsubsection{Partially  localized perturbations}\label{ss:p}  In \cite{rousset3}, Rousset and  Tzvetkov prove that the KdV solitary wave with a subcritical speed $0< c < c^*$ is orbitally stable under the global KP-I flow and orbitally unstable with a  supercritical speed $c > c^*$ in case of perturbations which are periodic in $y$ direction. For the KP-I equation  the critical speed is $c^*=4/\sqrt{3}$.  We perform several numerical experiments to observe this phenomena  for the KP-I equation, where $\alpha=2$. In order to understand  if a similar  behaviour occurs for the fKP-I equation we consider the case where $\alpha=1.5$. For this aim, we perturb the line solitary solutions by the function
\begin{equation}\label{pert2}
  \psi_2(x,y)=A(x+x_0)\exp(-(x+x_0)^2)\cos(y).
\end{equation}
Here $A$ and $x_0$ are determined as in \eqref{pert}.

Figures \ref{supercritical} and \ref{subcritical} depict the cases for supercritical and subcritical speeds $c_1=c^*+0.1$ and \mbox{$ c_2=c^*-0.1$ } respectively for KP-I equation. In both figures $x_0=20$. We see that the $L^\infty$-norm increases very fast for the supercritical speed. On the other hand the $L^\infty$-norm is oscillating and the oscillations are decreasing by the time for the subcritical speed. Therefore the numerical results indicate a nonlinear instability when $c > c^*$ and a nonlinear stability when $0< c < c^*$, which are  compatible with theoretical result in \cite{rousset3}.

 \begin{figure}[H]
  \begin{minipage}[b!]{0.45\linewidth}
    \includegraphics[width=3.1in,height=2.3in]{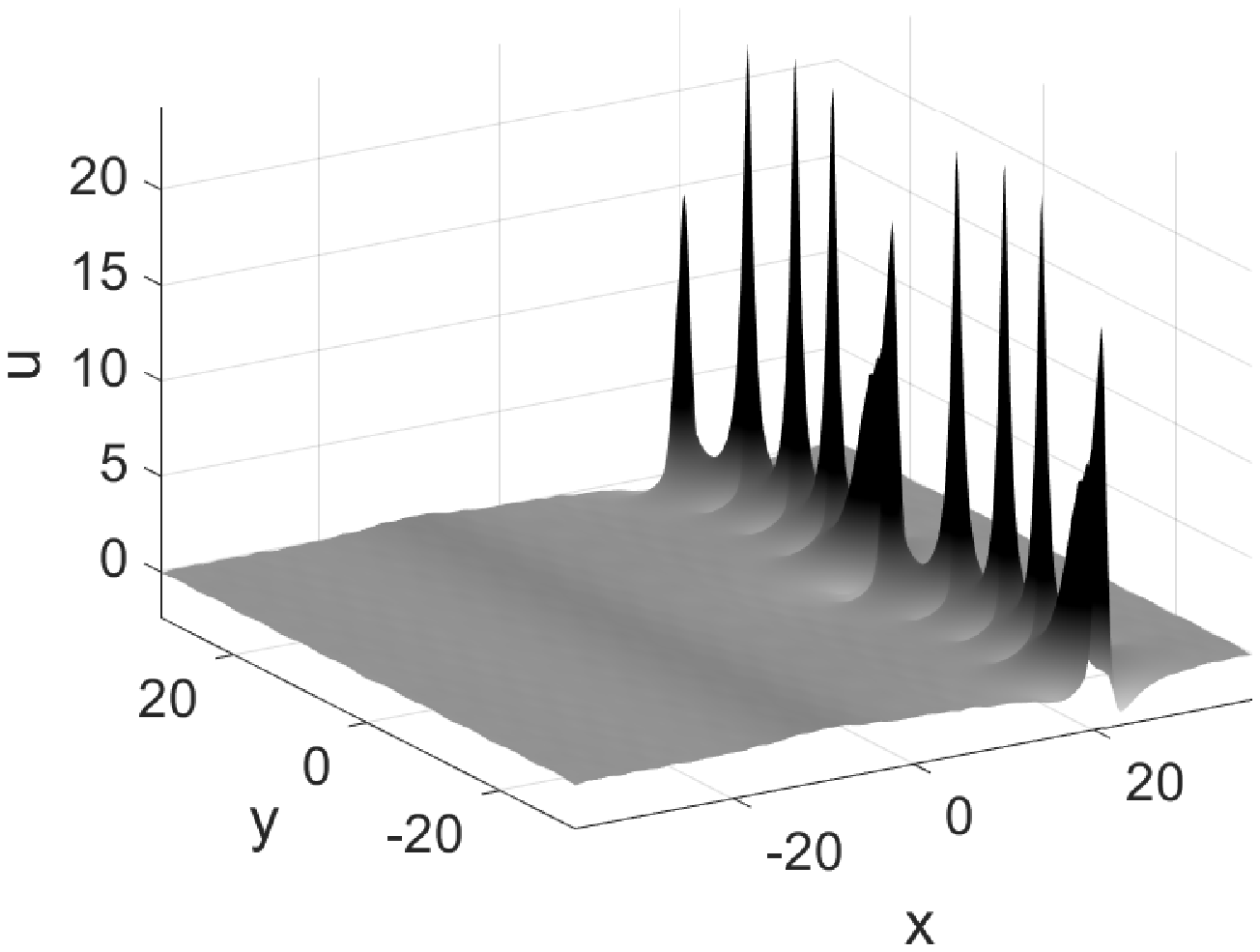}
  \end{minipage}
 \hspace{30pt}
 \begin{minipage}[h!]{0.45\linewidth}
    \includegraphics[width=3.1in,height=2.3in]{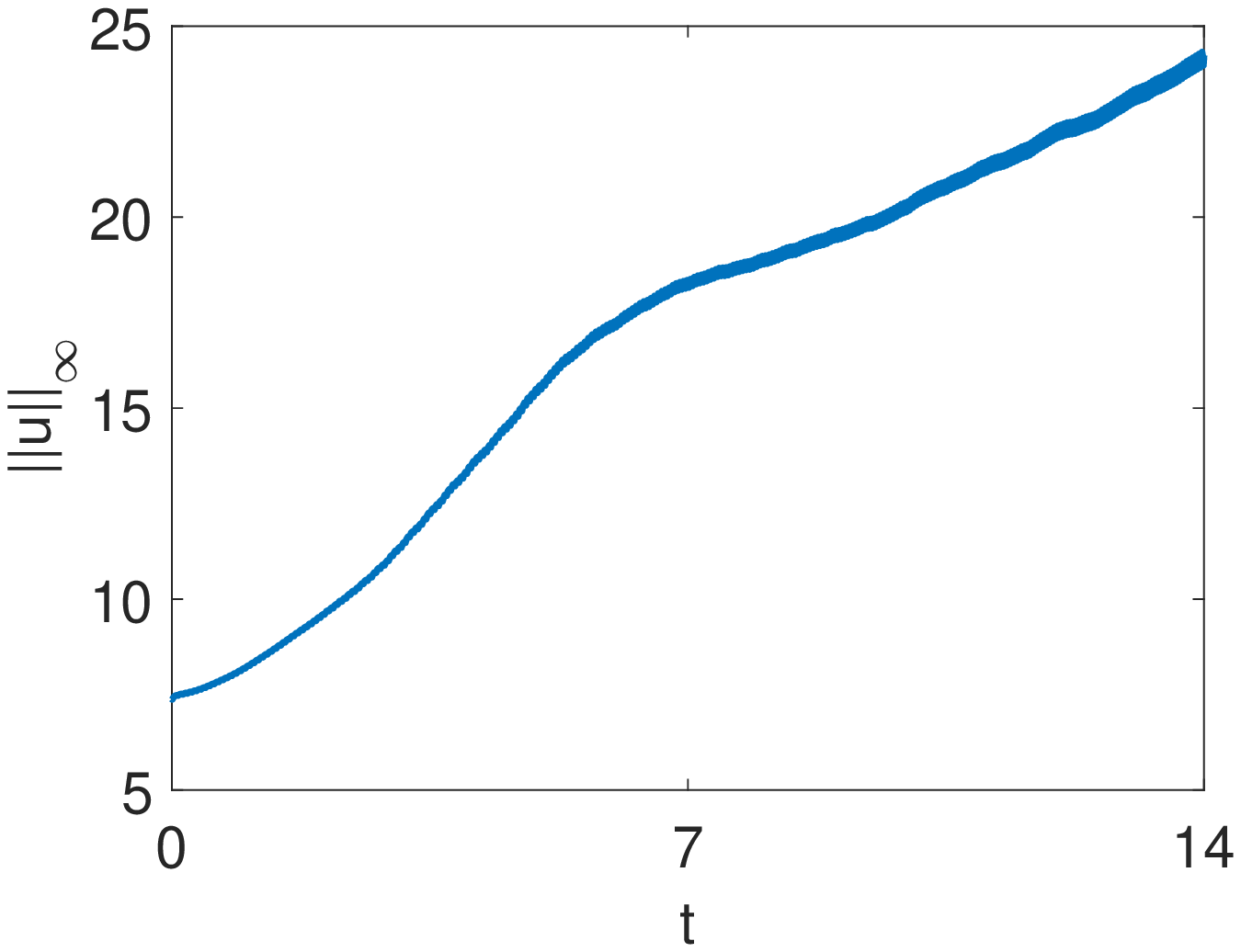}
  \end{minipage}
  \caption{The solution of the fKP-I equation that is perturbed with a nonlocalized perturbation at $t=14$ for $\alpha=2$ and $c=c^*+0.1$ (left panel) and the change of the $L^\infty$-norm in time (right panel).}
   \label{supercritical}
 \end{figure}

  \begin{figure}[H]
  \begin{minipage}[b!]{0.45\linewidth}
    \includegraphics[width=3.1in,height=2.3in]{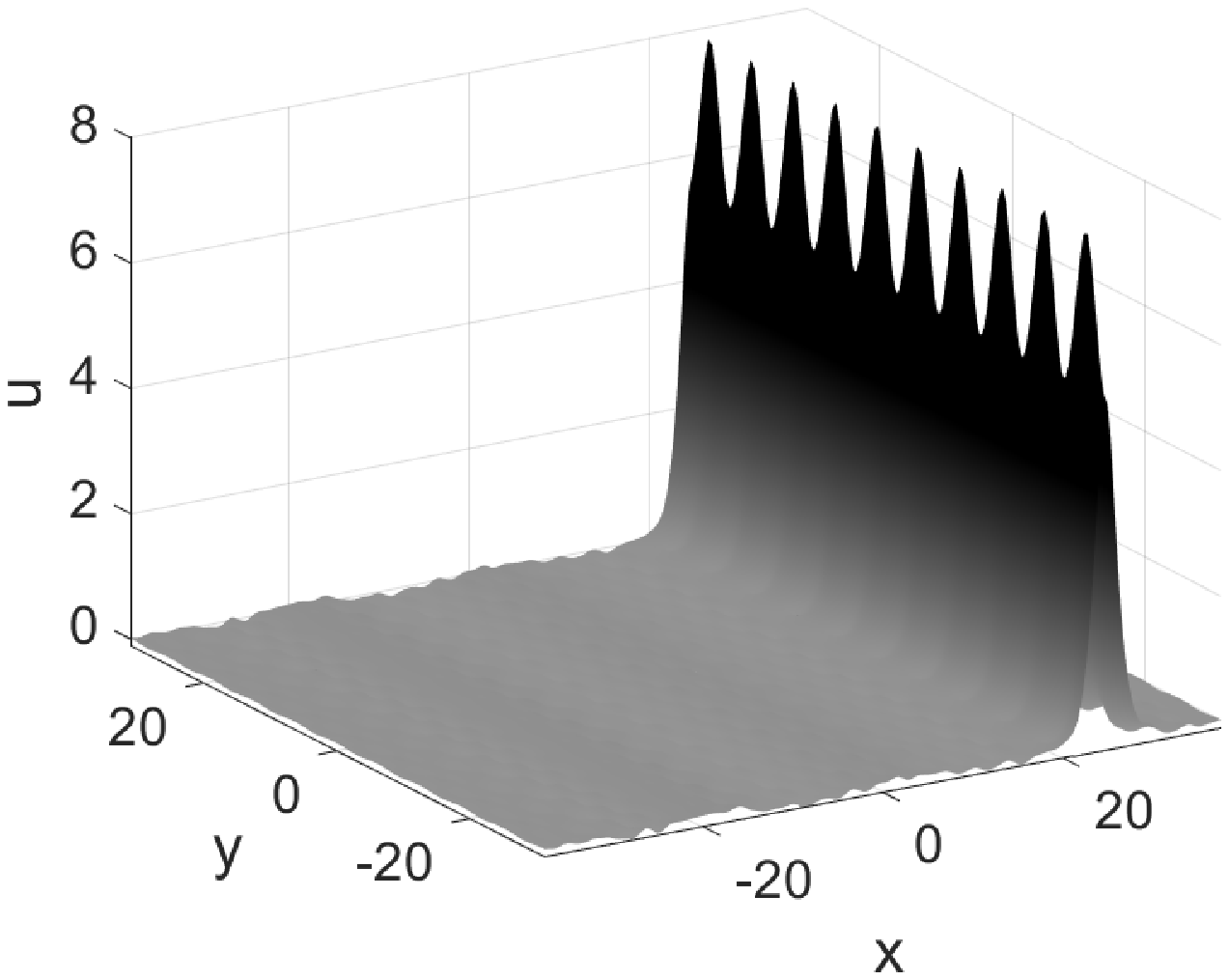}
  \end{minipage}
 \hspace{30pt}
 \begin{minipage}[h!]{0.45\linewidth}
    \includegraphics[width=3.1in,height=2.3in]{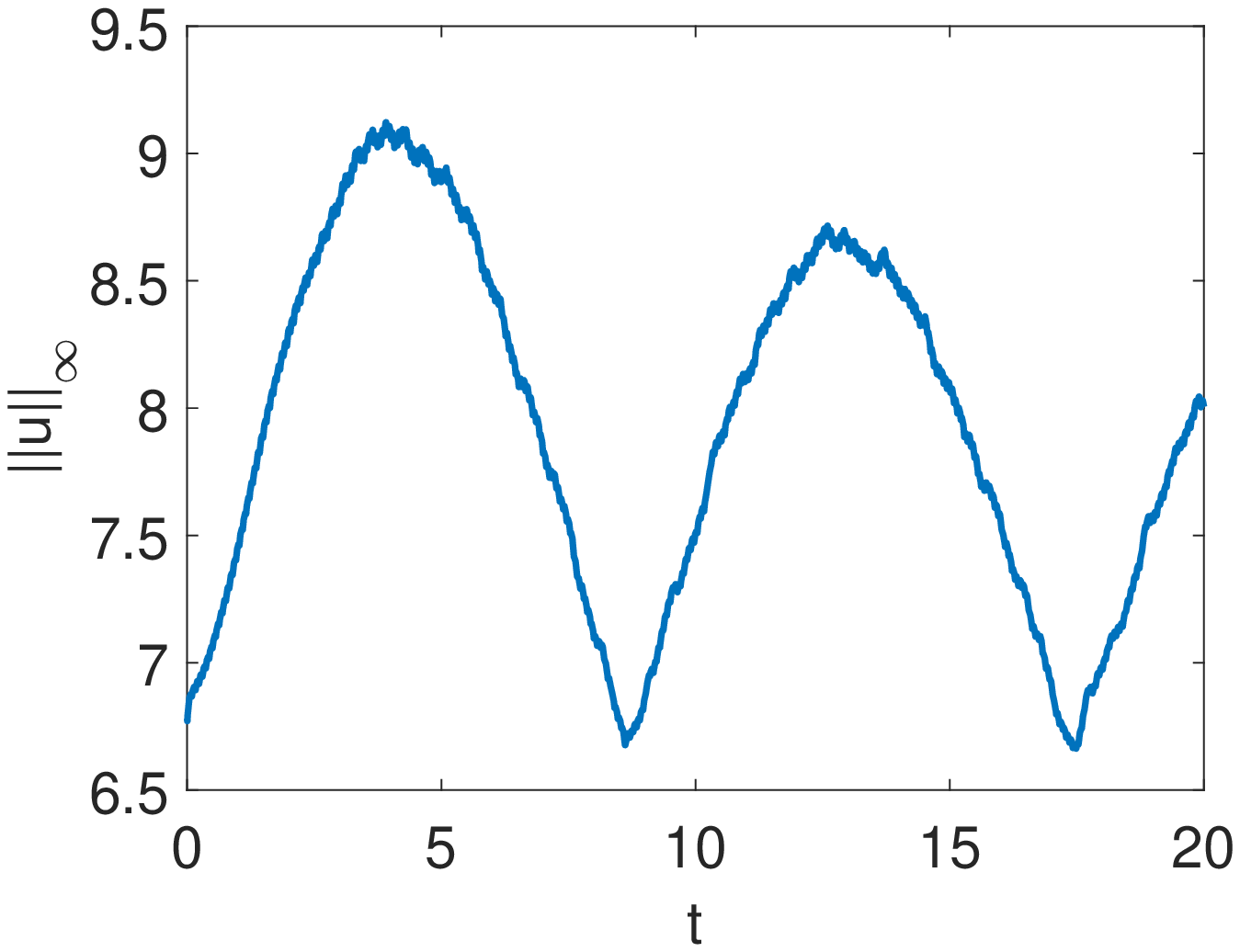}
  \end{minipage}
  \caption{The solution of the fKP-I equation that is perturbed with nonlocalized perturbation at $t=20$ for $\alpha=2$ and $c=c^*-0.1$ (left panel) and the change of the $L^\infty$-norm in time (right panel).}
   \label{subcritical}
 \end{figure}

  \begin{figure}[H]
	\begin{minipage}[htb!]{0.45\linewidth}
		\includegraphics[width=3.1in,height=2.3in]{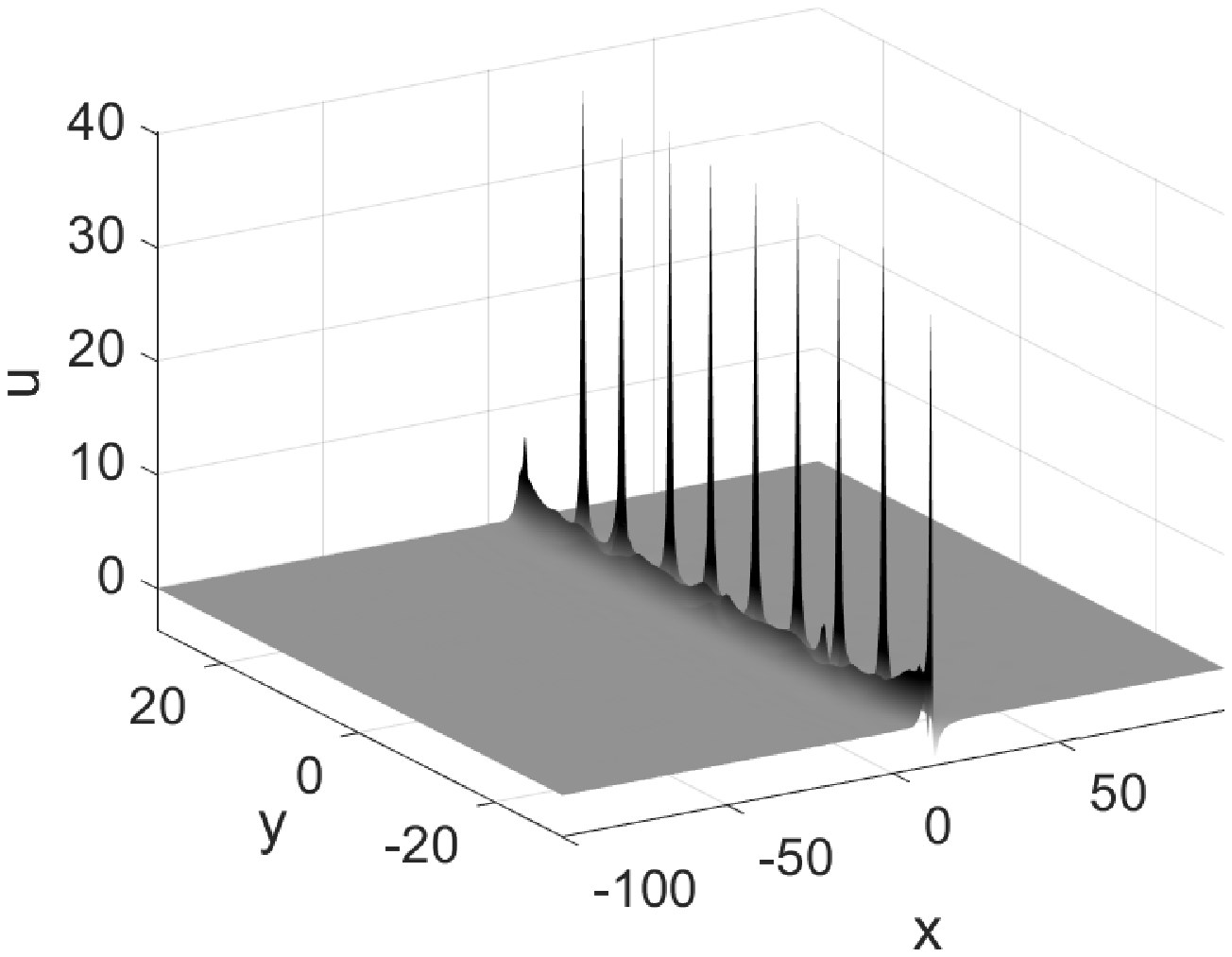}
	\end{minipage}
	\hspace{30pt}
	\begin{minipage}[htb!]{0.45\linewidth}
		\includegraphics[width=3.1in,height=2.3in]{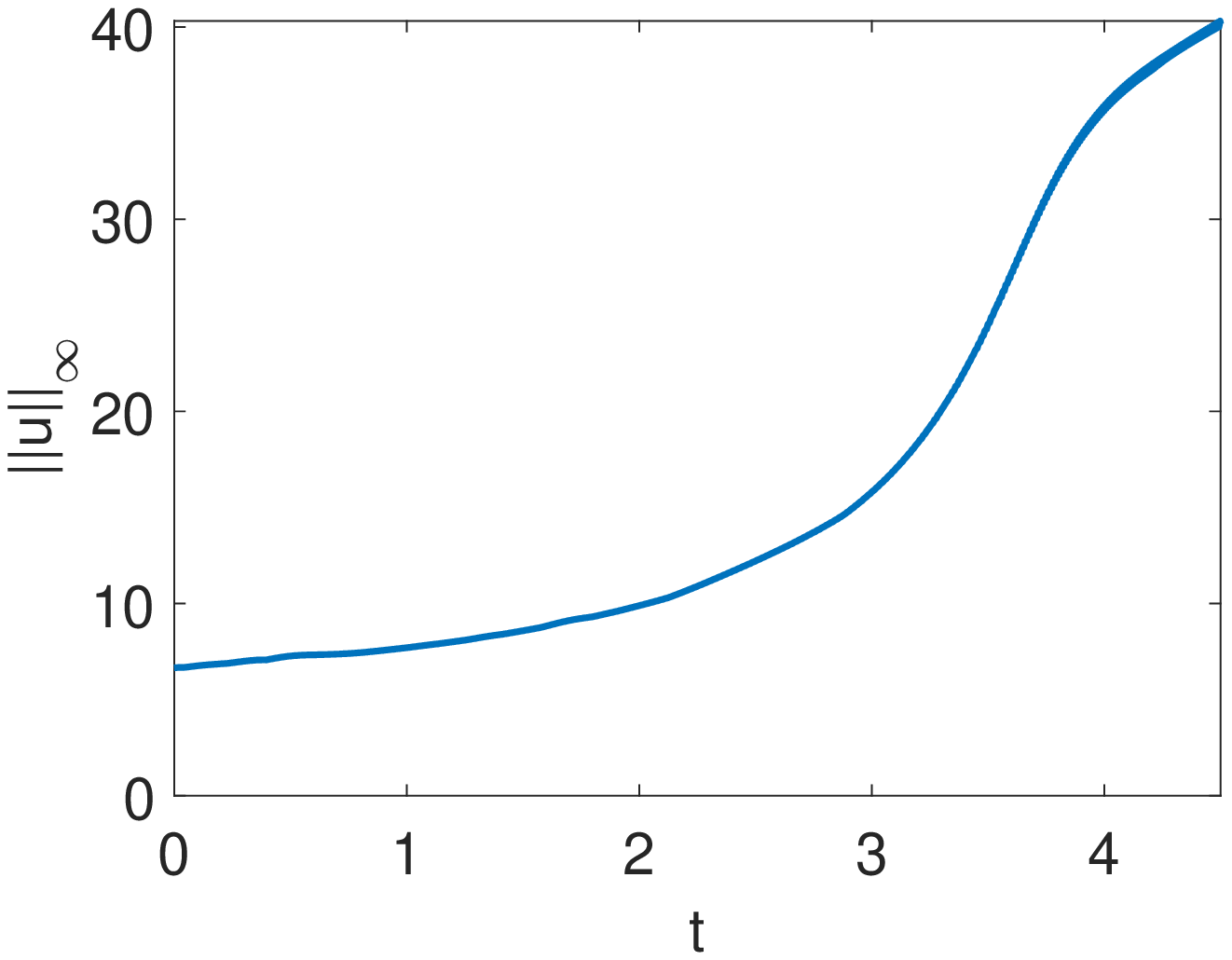}
	\end{minipage}
	\caption{The solution of the fKP-I equation that is perturbed with nonlocalized perturbation at $t=20$ for $\alpha=1.5$ and $c=2$ (left panel) and the change of the $L^\infty$-norm in time (right panel).}
	\label{P2alpha15instabil}
\end{figure}

\begin{figure}[H]
	\begin{minipage}[htb!]{0.45\linewidth}
		\includegraphics[width=3.1in,height=2.3in]{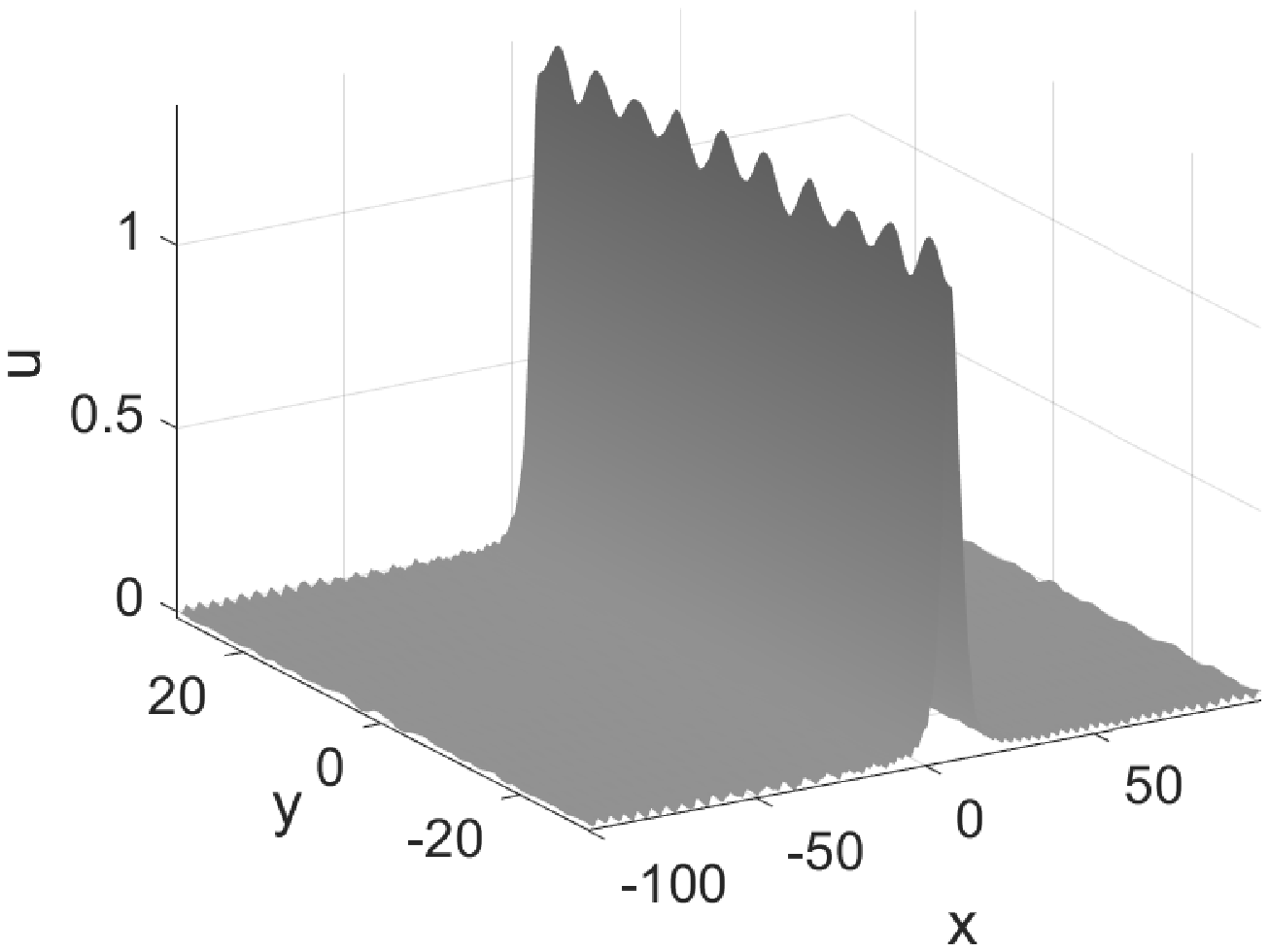}
	\end{minipage}
	\hspace{30pt}
	\begin{minipage}[htb!]{0.45\linewidth}
		\includegraphics[width=3.1in,height=2.3in]{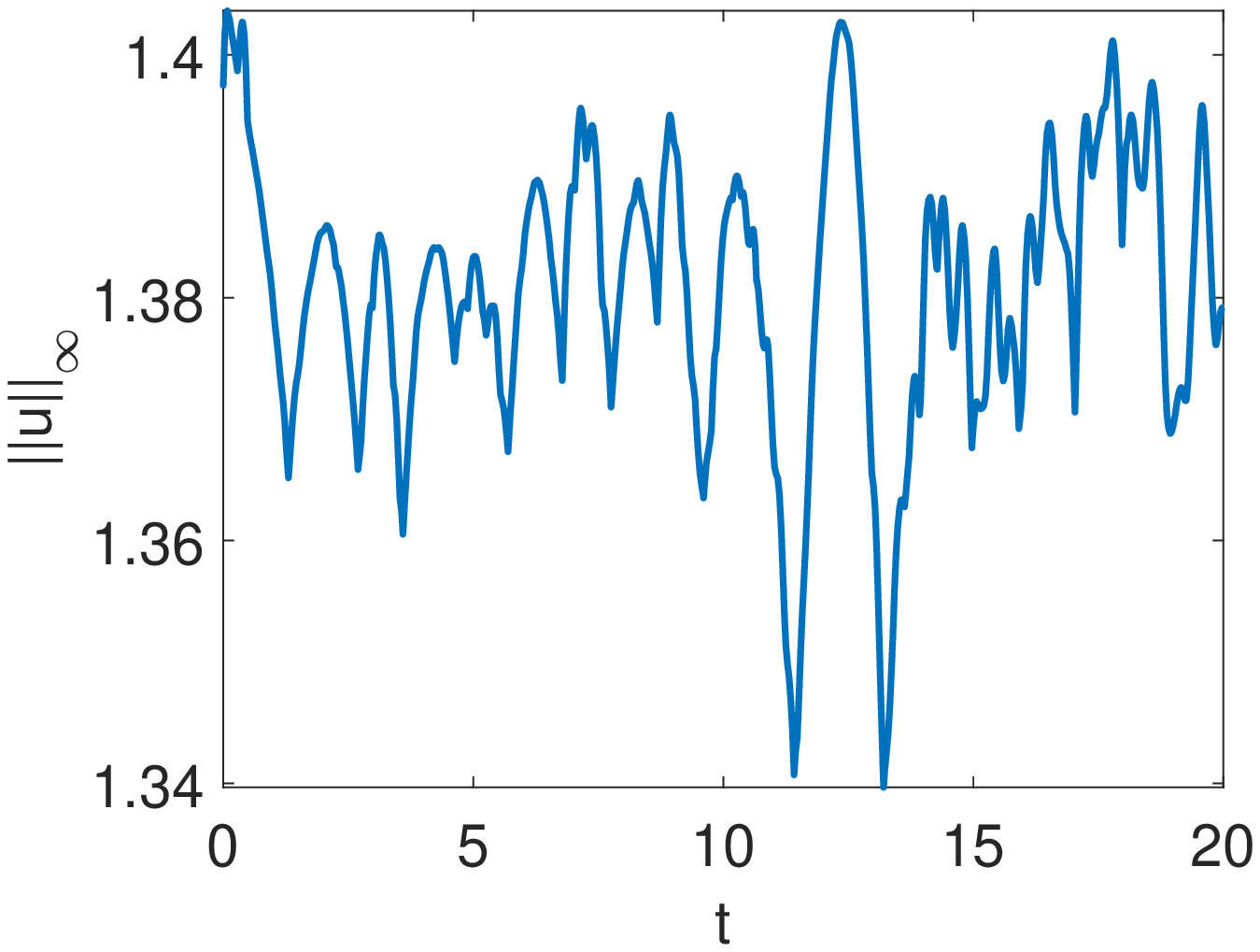}
	\end{minipage}
	\caption{The solution of the fKP-I equation that is perturbed with nonlocalized perturbation at $t=20$ for $\alpha=2$ and $c=0.4$ (left panel) and the change of the $L^\infty$-norm in time (right panel).}
	\label{P2alpha15stabil}
\end{figure}
Next, we investigate  the behaviour for the fKP-I equation with $\alpha=1.5$   numerically. We use $x_0=0$ in \eqref{pert2}. We first chose a $c$ value that is subcritical for KP-I equation. Figure \ref{P2alpha15instabil} shows the result when $c=2$.  The initial perturbation turns to growing peaks which move together with the wave. The numerical result indicates a  nonlinear instability.
The  experiments for $c=0.4$ is presented in Figure \ref{P2alpha15stabil}. Here the $L^\infty$-norm oscillates as in the KP-I equation with a subcritical $c$ value and we do not see the peaks which appear in the instable cases. The results may be interpreted as nonlinear stability.
From this experiment, we conjecture that there exists a critical speed $0.4<c^*_{1.5}<2$ for the fKP-I equation with $\alpha=1.5$. Notice that $c^*_{1.5}<c^*$, where $c^*$ is the critical speed for the classical KP-I equation.

\medskip

\subsection{Numerical experiments for the fKP-II equation}

The transverse stability of the classical KP-II equation has been shown in \cite{mizumachi1, mizumachi2}. To the best of our knowledge, there are no analytical results on the transverse (in)stability of line solitary waves for the fKP-II equation for general $\alpha$. It seems nontrivial to extend the methods used in \cite{mizumachi1, mizumachi2} to the fKP-II. For instance in \cite{mizumachi1} the author is able to find explicit modes of the linearized KP-II equation, which would not be possible for the fKP-II equation. Similarly, in \cite{mizumachi2} the authors relates the KP-II equation to a modified KP-II equation and proceed to show a global well-posedness result for that equation. In the fractional case the same procedure would lead to a modified fKP-II equation and showing global well-posedness for such an equation is likely to be more involved.

In this subsection, we present some numerical experiments to give an insight to the problem of transverse stability of fKP-II equation. Space intervals and number of grid points for the experiments are the same as in the fKP-I case. Here we show the   difference between the unperturbed solution and the perturbed solution at several times so the behaviour of the perturbation in time becomes more visible in a smaller scale.

\begin{figure}[H]
	\begin{minipage}[t]{0.45\linewidth}
		\includegraphics[width=3.1in,height=2.3in]{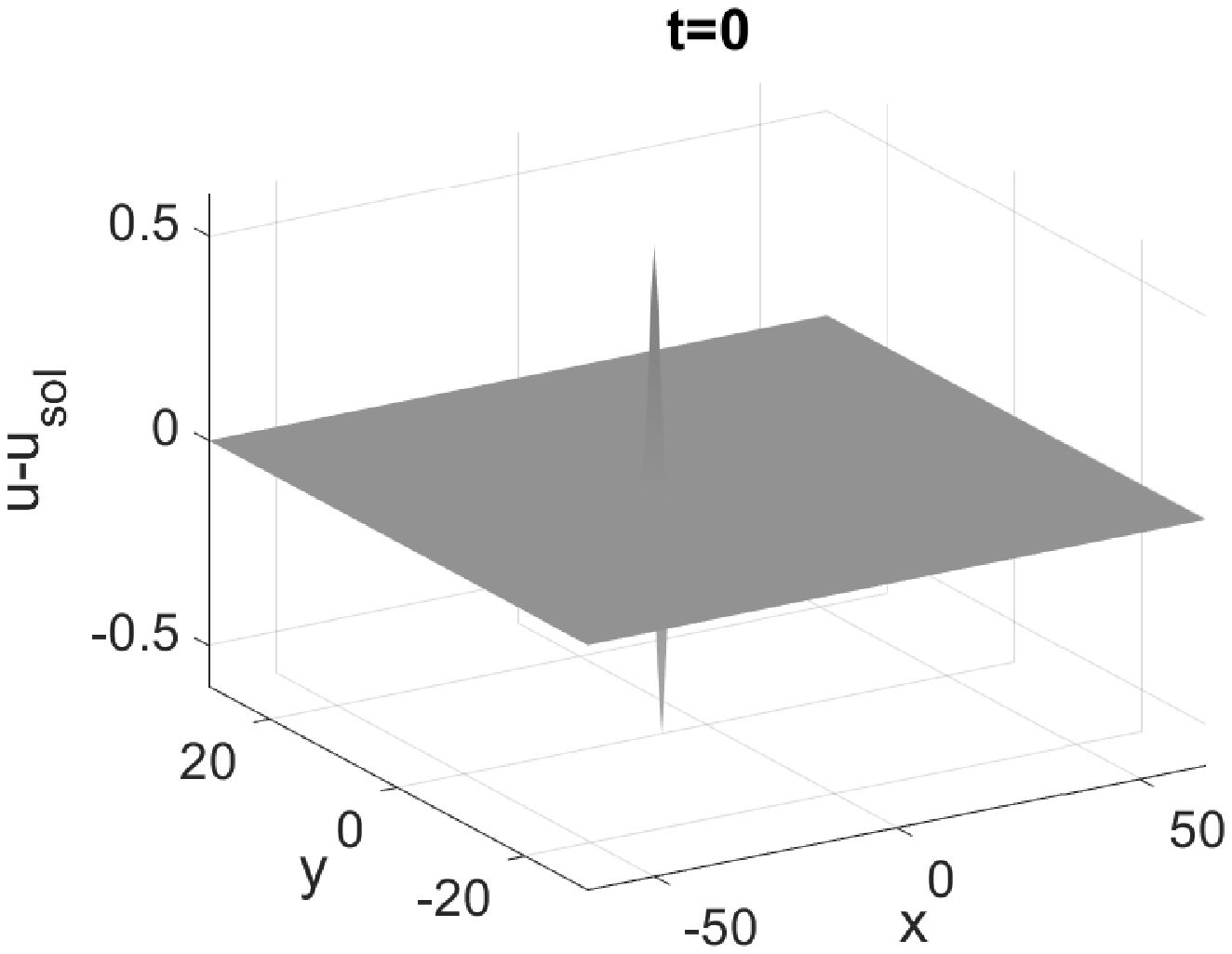}
	\end{minipage}
	\hspace{30pt}
	\begin{minipage}[t]{0.45\linewidth}
		\includegraphics[width=3.1in,height=2.3in]{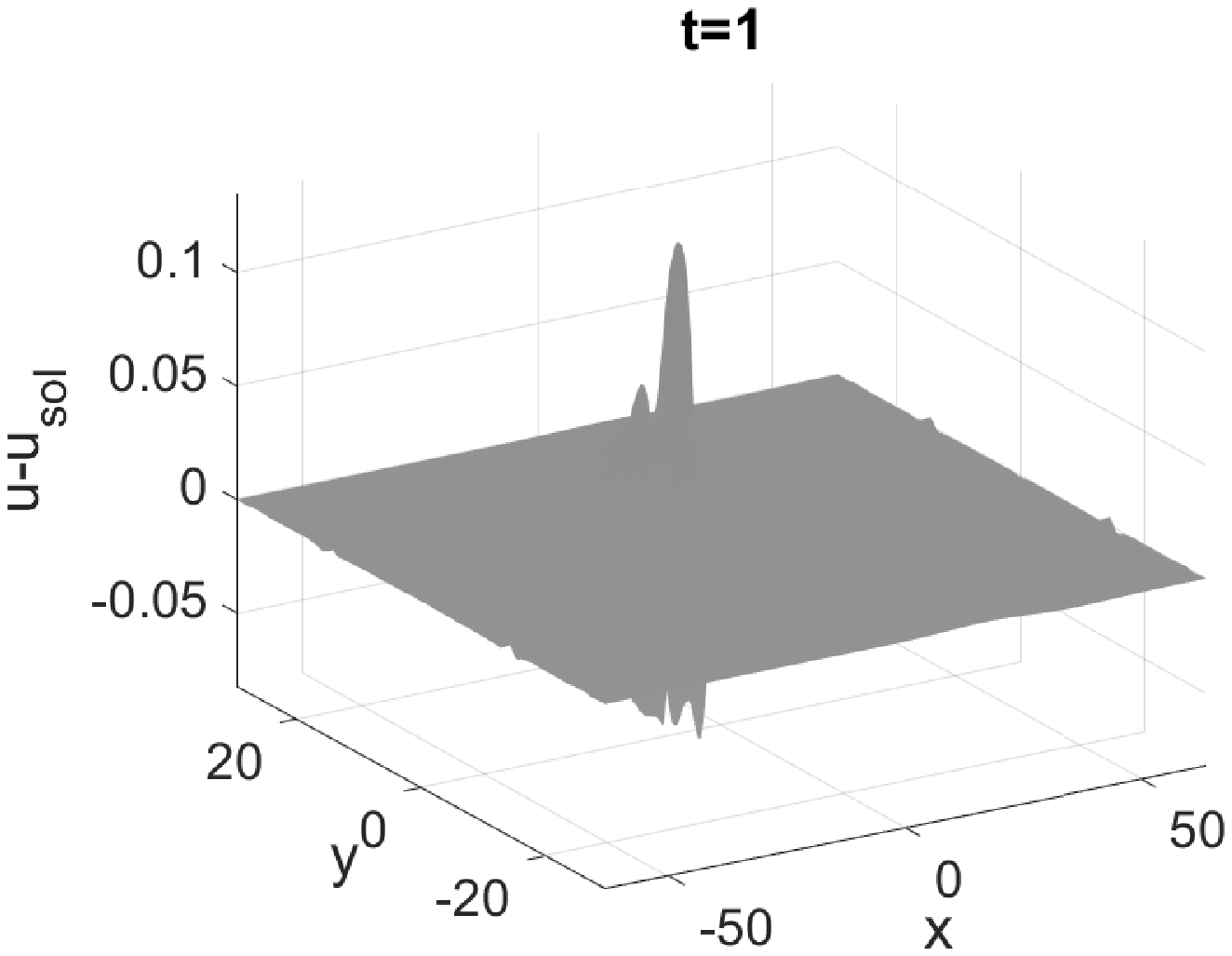}
	\end{minipage}
\end{figure}
\begin{figure}[H]
	\begin{minipage}[t]{0.45\linewidth}
		\includegraphics[width=3.1in,height=2.3in]{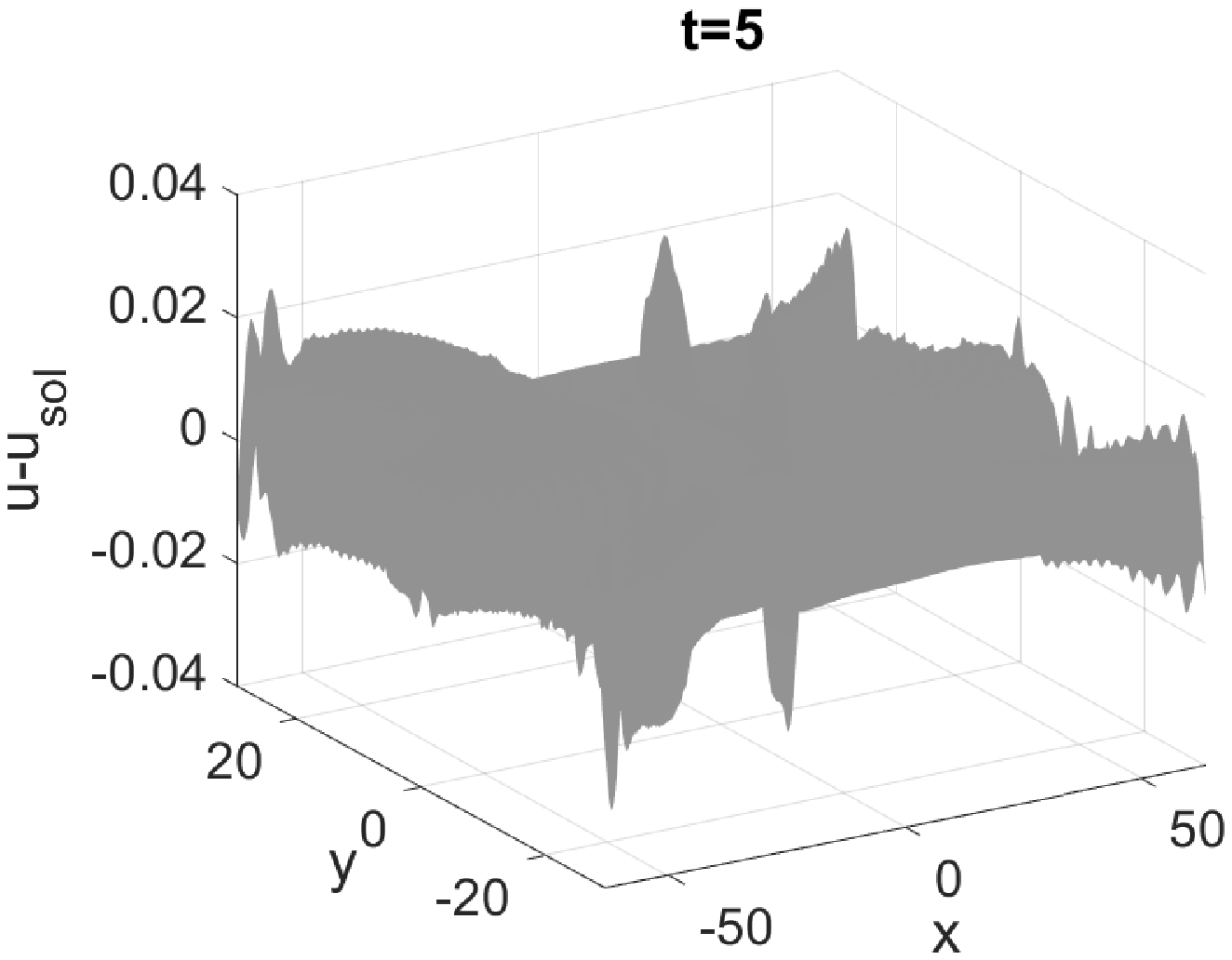}
	\end{minipage}
	\hspace{30pt}
	\begin{minipage}[t]{0.45\linewidth}
		\includegraphics[width=3.1in,height=2.3in]{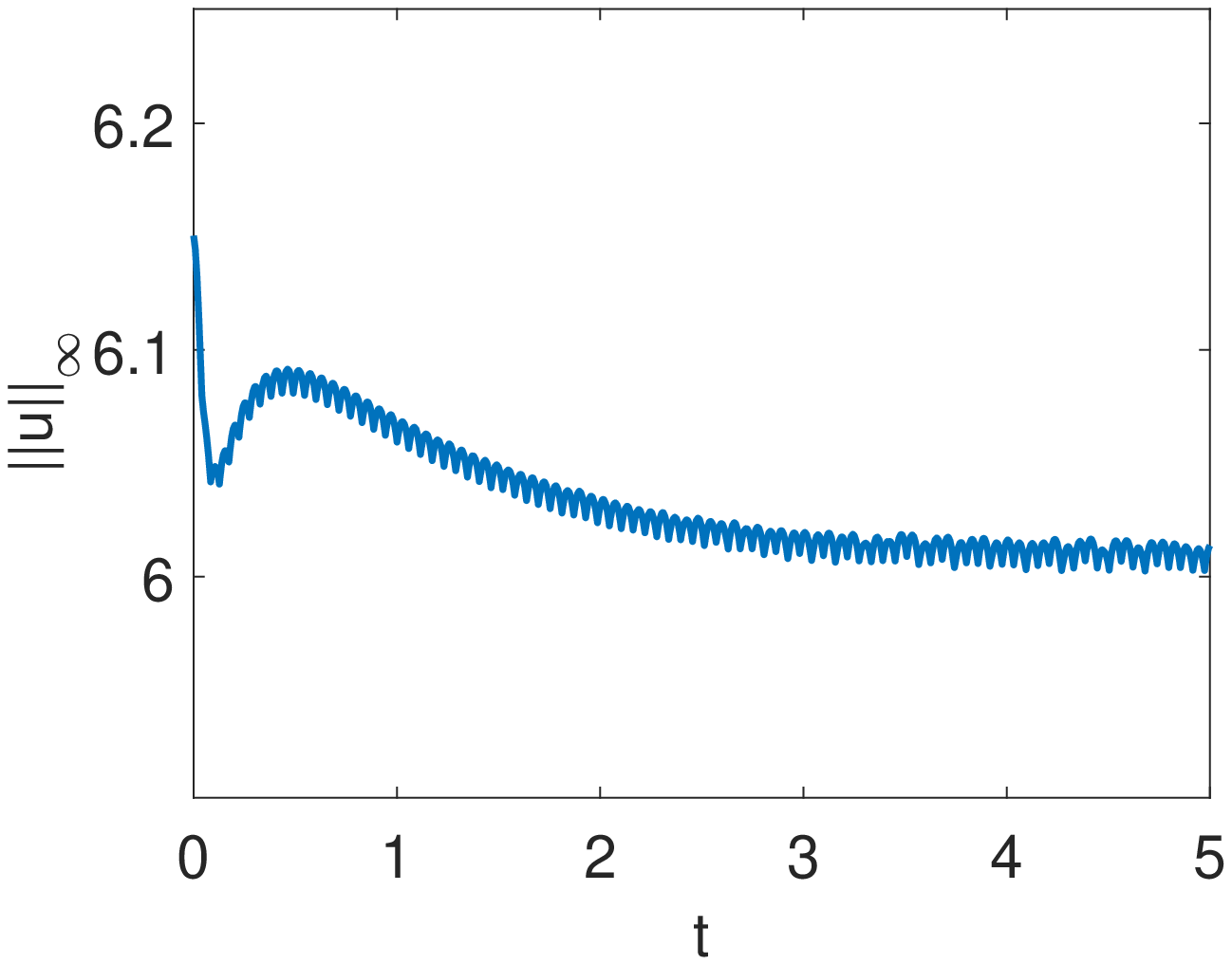}
	\end{minipage}
	\caption{The difference of perturbed solution and the solitary wave solution at several times and $L^\infty$ norm of the perturbed solution for the fKP-II equation when $\alpha=2$, $c=2$. }
	\label{KP2alpha2}
\end{figure}

We first consider the classical KP-II equation where $\alpha=2$ and present the results in Figure \ref{KP2alpha2}. We observe that the perturbation spreads out to the whole domain and gets smaller. The $L^\infty$-norm of the perturbed solution decreases and converges to the norm of the unperturbed solution. In Figure \ref{KP2alpha2} the graphs are given in different scales to show that perturbations still exist at later times but become very small. The numerical results are compatible with the nonlinear stability results  on the line solitary waves of KP-II equation in the literature.

Although there are no analytical results for the transverse stability properties of the fKP-II equation for general $\alpha$, similar behaviour with the case $\alpha=2$ is observed numerically. Figure \ref{kp2alpha135} shows the case where $\alpha=1.35$ that is just above the $L^2$-critical value $\alpha=4/3$. Next experiment for the fKP-II case is for an $L^2$-supercritical $\alpha$ value. Figure \ref{kp2alpha09} depicts the fKP-II equation with $\alpha=0.9$.  In both figures we do not show waves at $t=0$ as they are very similar with $\alpha=2$. The experiments for all $\alpha$ values show that the initial perturbation disperses to the whole domain and gets smaller by time. In Figures \ref{kp2alpha135} and \ref{kp2alpha09} we also  present the change in the $L^\infty $-norm of the wave  for $\alpha=1.35$ and $\alpha=0.9$ respectively. For both $\alpha$ values we observe that the effect of the perturbation vanishes by the time and the wave moves almost at a constant amplitude like a solitary wave. Therefore, the numerical experiments indicate a nonlinear stability of the solitary waves under the fKP-II flow.

   \begin{figure}[H]
  \begin{minipage}[t]{0.45\linewidth}
    \includegraphics[width=3.1in,height=2.3in]{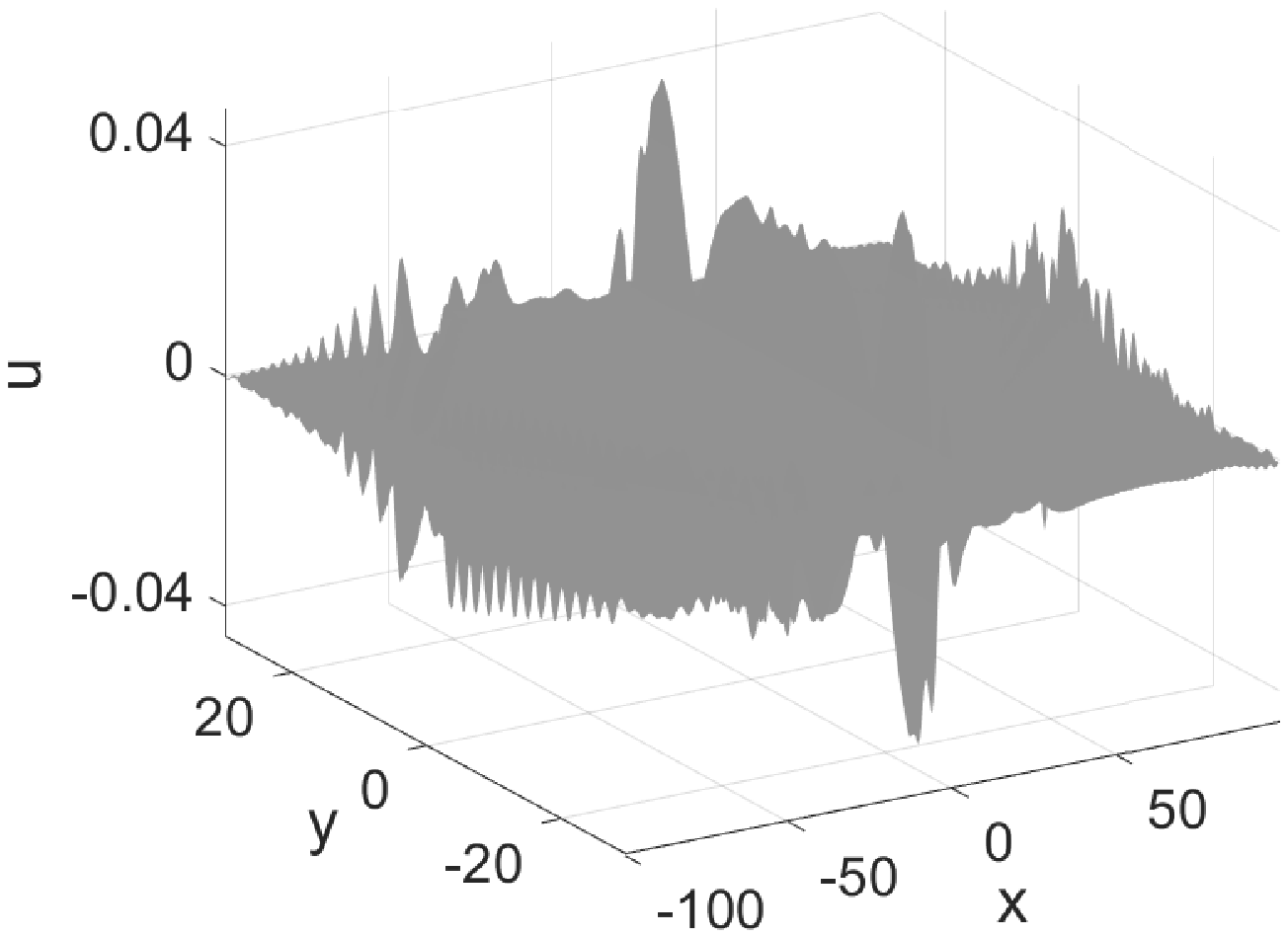}
  \end{minipage}
 \hspace{30pt}
 \begin{minipage}[t]{0.45\linewidth}
    \includegraphics[width=3.1in,height=2.3in]{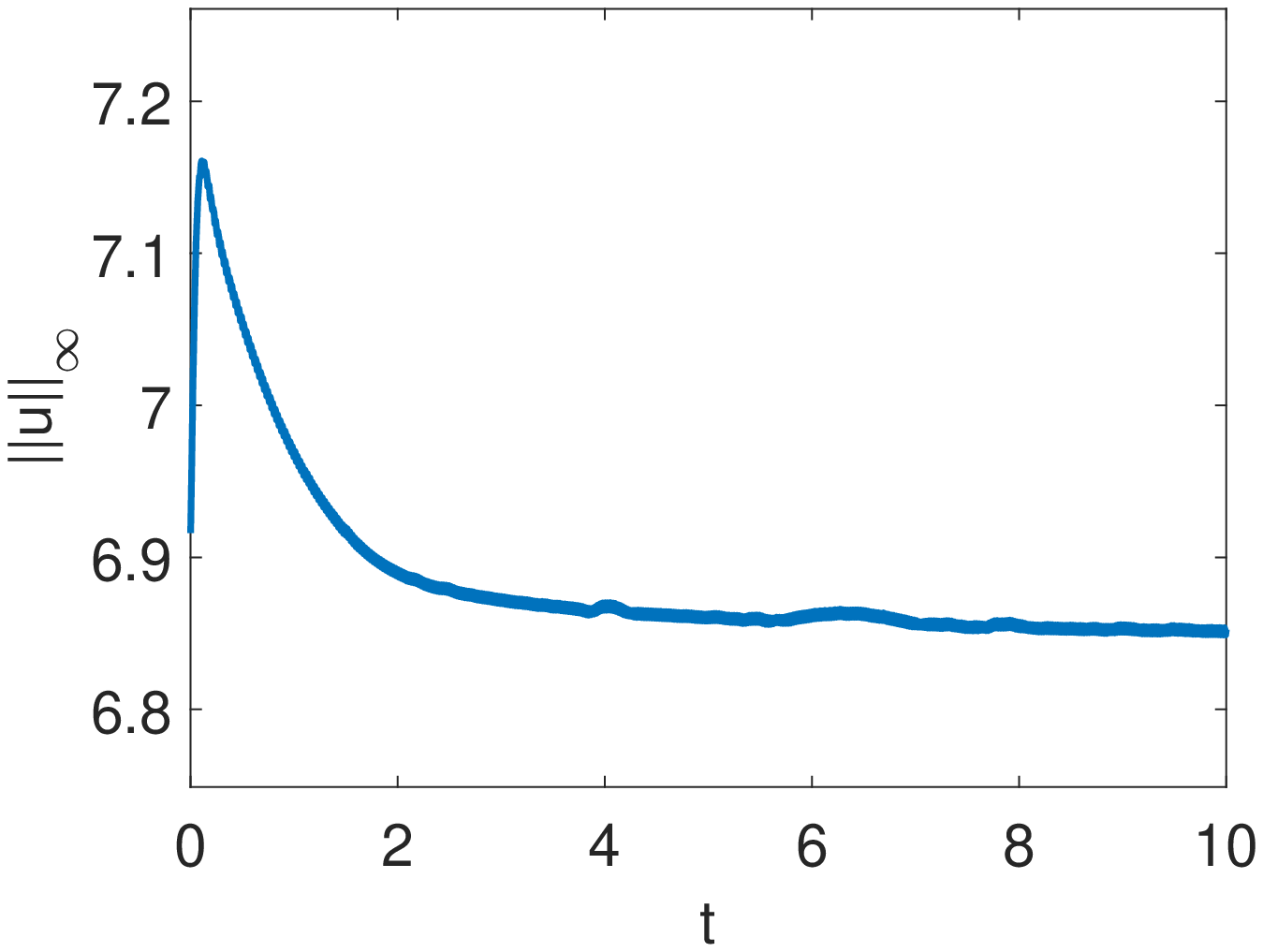}
  \end{minipage}
  \caption{The difference of perturbed solution and the solitary wave solution at $t=10$ (left panel)  and $L^\infty$ norm of the perturbed solution (right panel) for the fKP-II equation when $\alpha=1.35$, $c=2$.}
   \label{kp2alpha135}
 \end{figure}

    \begin{figure}[H]
  \begin{minipage}[t]{0.45\linewidth}
    \includegraphics[width=3.1in,height=2.3in]{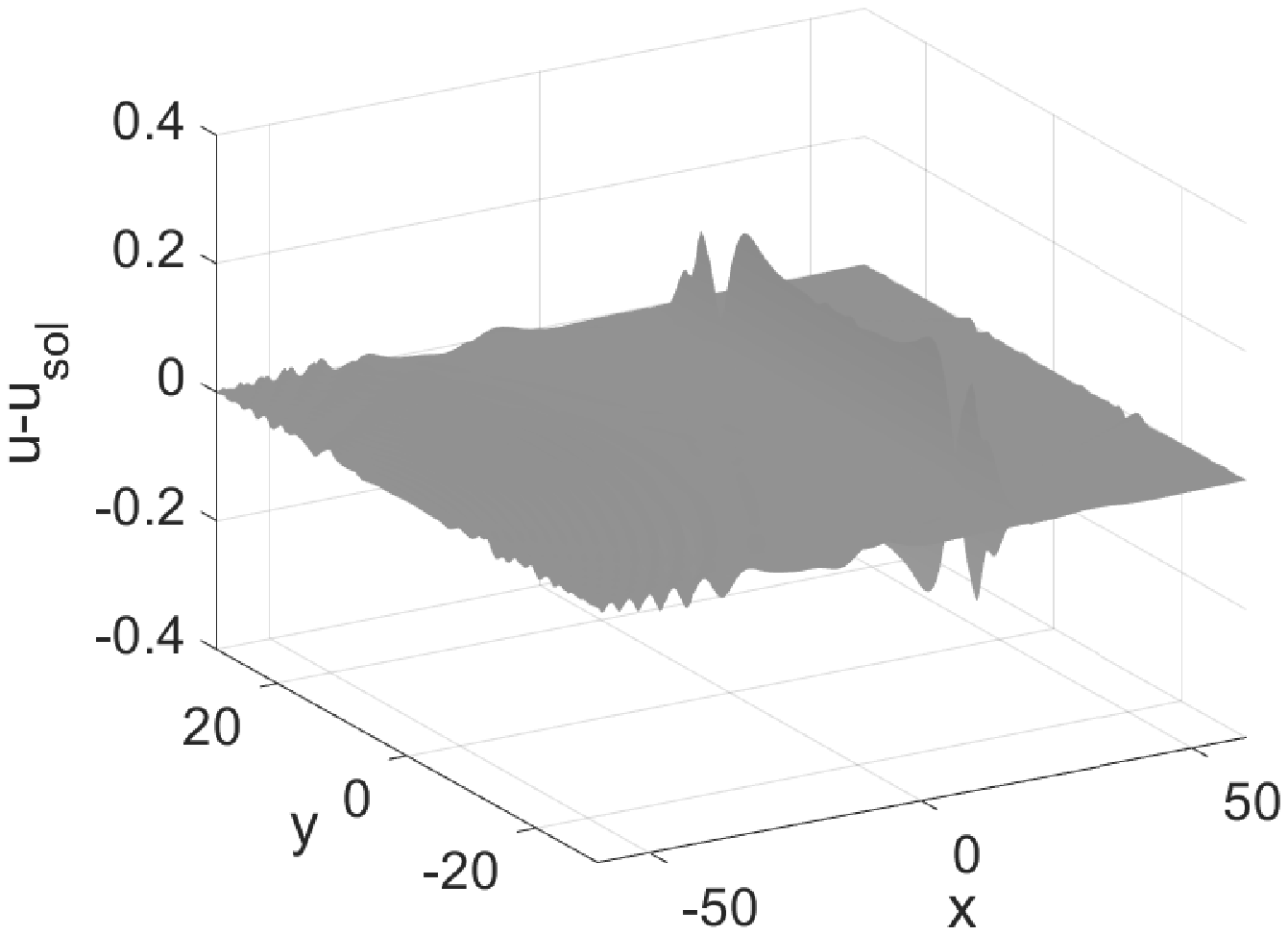}
  \end{minipage}
 \hspace{30pt}
 \begin{minipage}[t]{0.45\linewidth}
    \includegraphics[width=3.1in,height=2.3in]{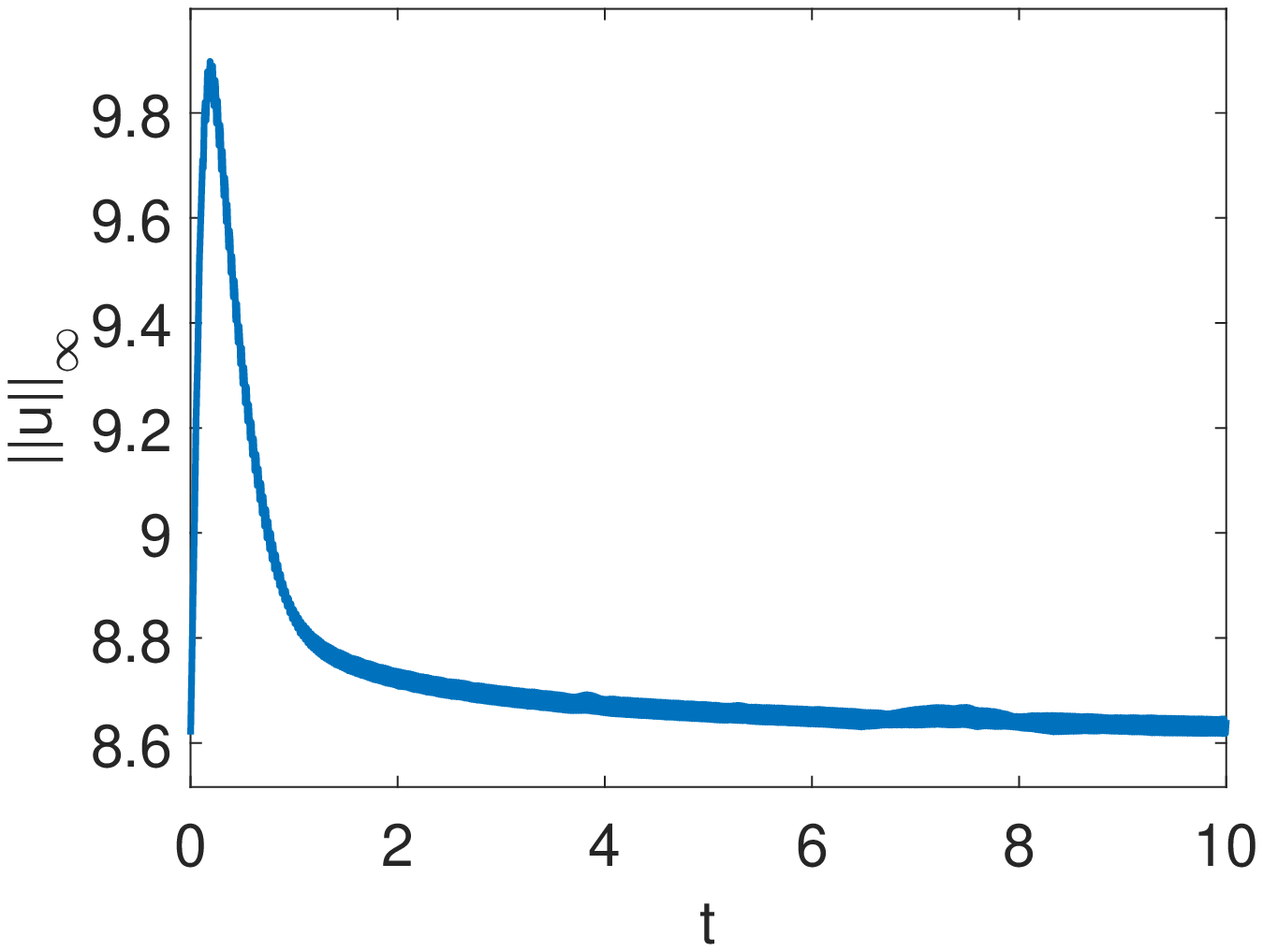}
  \end{minipage}
  \caption{The difference of perturbed solution and the solitary wave solution at $t=10$ (left panel)  and $L^\infty$ norm of the perturbed solution (right panel) for the fKP-II equation when $\alpha=0.9$, $c=2$.}
   \label{kp2alpha09}
 \end{figure}


Eventually, in Figure \ref{timeKP2} we present the time when the amplitude of the initial perturbation is halved. In the left panel we fix $c=2$ and consider several values of $\alpha$, then in right panel we fix $\alpha=1.5$ and consider various values of $c$. We see that the effect of perturbation vanishes faster for larger values of $\alpha$ and for smaller values of $c$.

    \begin{figure}[H]
  \begin{minipage}[t]{0.45\linewidth}
    \includegraphics[width=3.1in,height=2.3in]{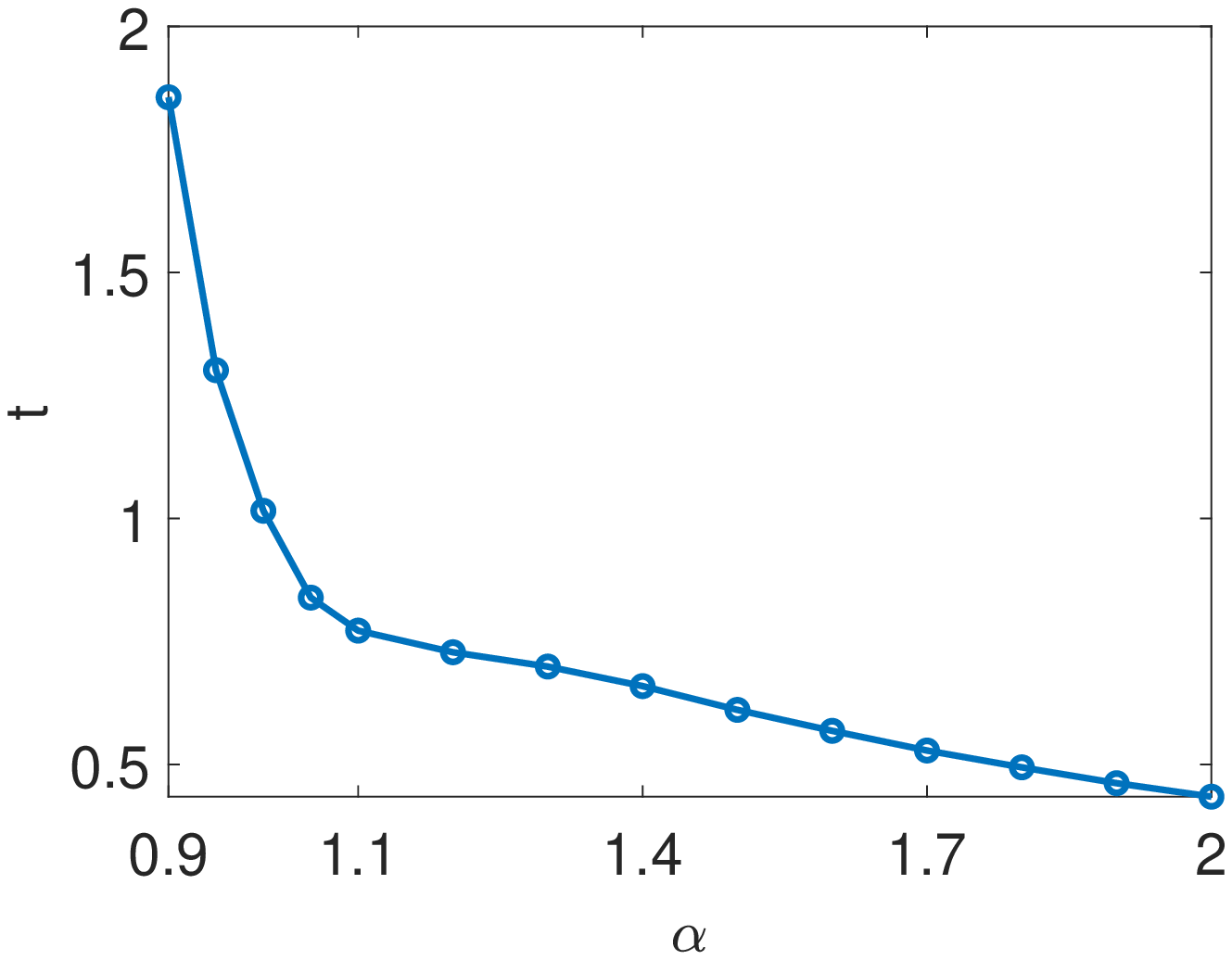}
  \end{minipage}
 \hspace{30pt}
 \begin{minipage}[t]{0.45\linewidth}
    \includegraphics[width=3.1in,height=2.3in]{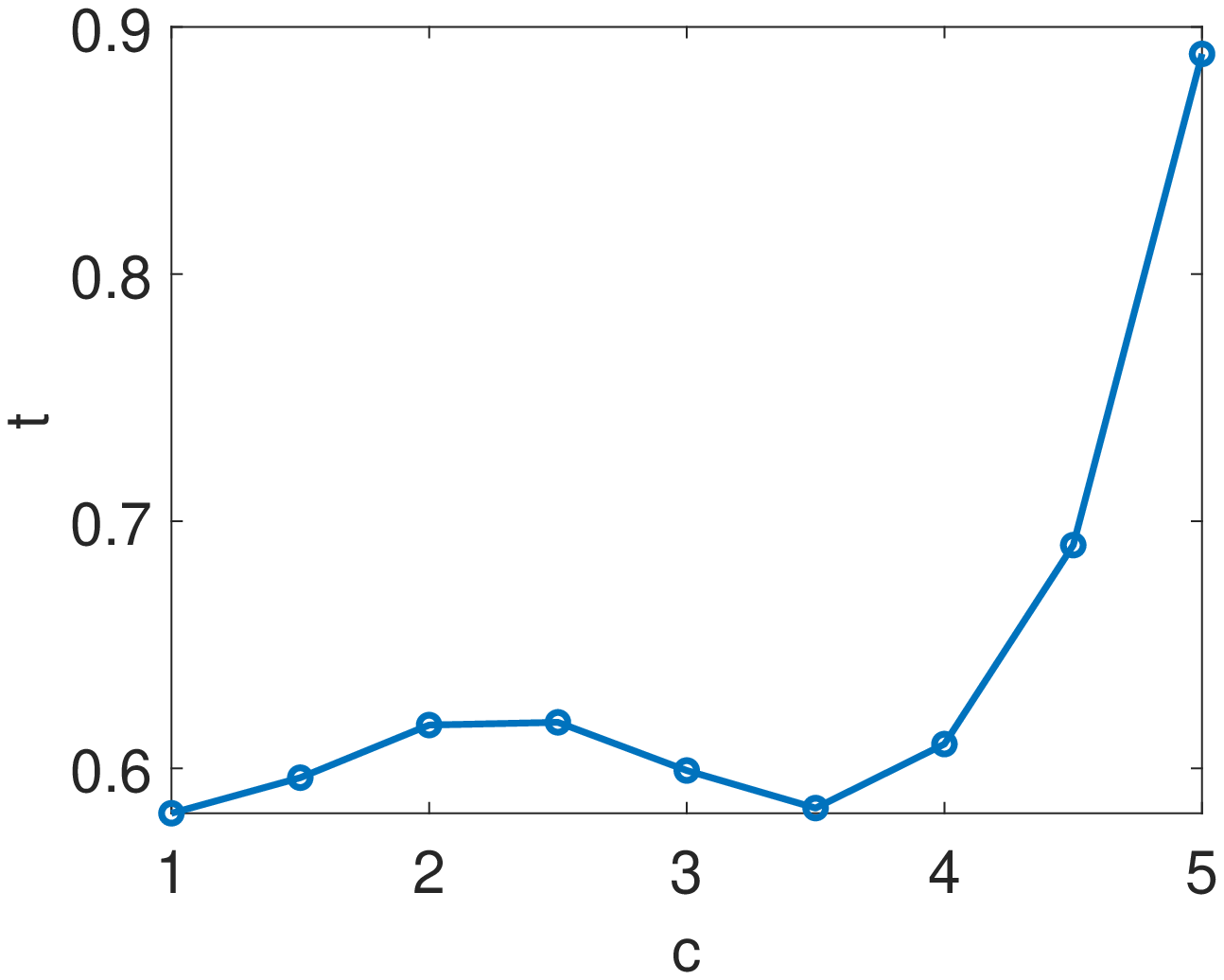}
  \end{minipage}
  \caption{The time that the amplitude of the initial perturbation is halved for various $\alpha$ values when $c=2$ (left panel) and for various values of $c$ when $\alpha=1.5$
(right panel).}
   \label{timeKP2}
 \end{figure}


\appendix

\section{Gagliardo--Nirenberg type inequality}

We formulate an auxiliary result in the spirit of Gagliardo--Nirenberg inequalities:

\begin{lemma}\label{lemmaA} If $0<a<b$ and $u\in H^b(\R)$, then for all $ n\in \N$, and for any $\e>0$ we have
	\[
	\|D^au\|^2= \e \sum_{m=0}^{n-1}\frac{1}{4^m}\|D^b u\|^2+ \frac{1}{4^n\e^{2^n-1}}\|D^{s_{n+1}}u\|^2,
	\]
	where $(s_n)_{n\in\N}$ is a recursively defined sequence with $s_1=a$, $s_{n+1}=2s_n-b$, and it is monotonously decreasing with $\lim_{n\to \infty}s_n=-\infty$.
	In particular, there exists $N\geq 1$ such that
	\[
	\|D^au\|^2\leq \e \sum_{m=0}^{N-1}\frac{1}{4^m}\|D^b u\|^2+ \frac{1}{4^n\e^{2^N-1}}\|u\|^2.
	\]
\end{lemma}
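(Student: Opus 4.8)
The plan is to pass to the Fourier side, where $\|D^s u\|^2=\int_{\R}|\xi|^{2s}\,|\hat u(\xi)|^2\,d\xi$, and reduce the whole statement to a single pointwise inequality for the multipliers that is then iterated. The engine is the reflection point $s_2:=2a-b$, chosen so that $b+s_2=2a$ and hence $|\xi|^{2a}=|\xi|^{b}\,|\xi|^{s_2}$ is an exact factorization. Young's inequality in the form $XY\le \e X^2+\tfrac{1}{4\e}Y^2$ (which is just $(\sqrt{\e}\,X-\tfrac{1}{2\sqrt{\e}}Y)^2\ge0$) applied with $X=|\xi|^{b}$ and $Y=|\xi|^{s_2}$ gives the pointwise bound $|\xi|^{2a}\le \e\,|\xi|^{2b}+\tfrac{1}{4\e}|\xi|^{2s_2}$, and integrating against $|\hat u|^2$ yields the single-step estimate
\[
\|D^a u\|^2\le \e\,\|D^b u\|^2+\tfrac{1}{4\e}\|D^{s_2}u\|^2 .
\]
This is exactly the asserted identity for $n=1$; note that the $=$ in the statement must be read as $\le$, since equality in Young's inequality would force $|\xi|^{b}=|\xi|^{s_2}$ for all $\xi$.

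Next I would iterate, applying the single-step estimate with $a$ replaced successively by $s_2,s_3,\dots$ (and $b$ held fixed), using a fresh parameter $\e_k$ at the $k$-th step. Writing $C_n$ for the running coefficient of $\|D^b u\|^2$ and $R_n$ for that of $\|D^{s_{n+1}}u\|^2$, one step replaces $R_n\|D^{s_{n+1}}u\|^2$ by $R_n\e_{n+1}\|D^b u\|^2+\tfrac{R_n}{4\e_{n+1}}\|D^{s_{n+2}}u\|^2$, so that $C_{n+1}=C_n+R_n\e_{n+1}$ and $R_{n+1}=R_n/(4\e_{n+1})$. The choice $\e_k=\e^{2^{k-1}}$ makes the bookkeeping explicit: since $\sum_{k=1}^{n}2^{k-1}=2^n-1$ one gets $R_n=1/(4^n\e^{2^n-1})$, whence $R_n\e_{n+1}=\e/4^n$ and therefore $C_n=\e\sum_{m=0}^{n-1}4^{-m}$. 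A straightforward induction on $n$ then delivers the displayed inequality. The two structural claims about $(s_n)$ follow from the closed form $s_n=a-(2^{n-1}-1)(b-a)$, which is immediate from $s_{n+1}=2s_n-b$: it is strictly decreasing (as $b>a$) and tends to $-\infty$.

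The main obstacle is the terminal step, namely replacing the negative-order term by $\|u\|^2$. Because $s_n\downarrow-\infty$ there is a first index $N$ with $s_{N+1}\le0$, but the tempting bound $\|D^{s_{N+1}}u\|\le\|u\|$ is in fact false: for $s_{N+1}<0$ the multiplier $|\xi|^{2s_{N+1}}$ blows up as $\xi\to0$, so the negative Riesz potential cannot be dominated by $\|u\|_{L^2}$ at low frequencies. I would circumvent this by performing the last interpolation directly between the exponents $2b$ and $0$ instead of by reflection: stopping at the last index with $s_N\in[0,b)$, Young's inequality gives $|\xi|^{2s_N}\le \e'\,|\xi|^{2b}+C(\e')$ for all $\xi$ (here $0\le s_N\le b$, so no low-frequency singularity appears), and hence $\|D^{s_N}u\|^2\le \e'\|D^b u\|^2+C(\e')\|u\|^2$. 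Feeding this into the finite chain produces the required form
\[
\|D^a u\|^2\le \e\Big(\sum_{m=0}^{N-1}4^{-m}\Big)\|D^b u\|^2+C(\e)\,\|u\|^2 ,
\]
with $C(\e)$ a negative power of $\e$ comparable to the stated $\e^{-(2^N-1)}$; this is precisely the form used in the applications, where $a=1$ and $b=2+\alpha$.
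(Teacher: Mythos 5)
Your proposal is correct and follows essentially the same route as the paper's proof: Plancherel plus Young's inequality with the reflected exponent $2s_n-b$, iterated with the parameters $\e_k=\e^{2^{k-1}}$ and the same bookkeeping $R_n=1/(4^n\e^{2^n-1})$, followed by a terminal interpolation of $\|D^{s_N}u\|^2$ between $\|D^bu\|^2$ and $\|u\|^2$ at the last nonnegative index $s_N$ --- the paper implements this same last step by splitting frequencies at $|\xi|=1$ and using $s_{N+1}=2s_N-b<0$ on the high-frequency part, so the low-frequency pitfall you flag is exactly what the authors also avoid. You are likewise right that the displayed ``$=$'' should read ``$\leq$'' (Young's inequality gives only one-sided control); this is a typo in the statement, as is the stray $4^n$ in the final display.
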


\begin{proof}
	The statement is proved by induction. If $n=1$, then
	\begin{align*}
		\|D^a u\|^2 &= \int_{\R} (|\xi|^a |\hat u(\xi)|)^2\, d\xi=\int_{\R}(|\xi|^b |\hat u(\xi)|)(|\xi|^{2a-b}|\hat u(\xi)|)\,d\xi = \e \|D^b u\|^2 + \frac{1}{4\e}\|D^{2a-b}u\|^2,
	\end{align*}
	for any $\e>0$,
	where we use Plancherel's identity, and Young's inequality. In view of $s_2=2s_1-b=2a-b$, the statement holds true for $n=1$.
	
	Now assume that for some $n\geq 1$
	\[
	\|D^au\|^2= \e \sum_{m=0}^{n-1}\frac{1}{4^m}\|D^b u\|^2+ \frac{1}{4^n\e^{2^n-1}}\|D^{s_{n+1}}u\|^2.
	\]
	Estimating
	\begin{align*}
		\|D^{s_{n+1}}\|^2&=\int_{\R} (|\xi|^{s_{n+1}} |\hat u(\xi)|)^2\, d\xi\\
		&=\int_{\R}(|\xi|^b |\hat u(\xi)|)(|\xi|^{2s_{n+1}-b}|\hat u(\xi)|)\,d\xi = \e_n \|D^b u\|^2 + \frac{1}{4\e_n}\|D^{2s_{n+1}-b}u\|^2,
	\end{align*}
	where $\e_n=\e^{2^n}$, we obtain that
	\begin{align*}
		\|D^au\|^2&= \e \sum_{m=0}^{n-1}\frac{1}{4^m}\|D^b u\|^2+ \frac{1}{4^n\e^{2^n-1}}\left( \e^{2^n} \|D^b u\|^2 + \frac{1}{4\e^{2^n}}\|D^{2s_{n+1}-b}u\|^2 \right)\\
		&=\e \sum_{m=0}^{n}\frac{1}{4^m}\|D^b u\|^2+\frac{1}{4^{n+1}\e^{2^{n+1}-1}}\|D^{s_{n+2}}u\|^2,
	\end{align*}
	which concludes the induction.
	As $a<b$ and $s_1=a$ we have $s_2=2s_1-b<s_1$. Repeating the argument, we obtain $s_{n+1}-s_n=s_n-b<0$, which therefore shows that $(s_n)_{n\in \N}$ forms a monotonously decreasing sequence.
	If the sequence is bounded from below, then $\lim_{n\to \infty} s_n = 2\lim_{n\to \infty}s_{n-1}-b$, but this would only be possible if $\lim_{n\to \infty}s_n=b$, which is a contradiction to the monotonic decay of the sequence. Hence $\lim_{n\to \infty}s_n=-\infty$.
	
	\medskip
	
	In order to prove the second assertion we use the fact that there exists $N\geq 1$ such that $s_{N}>0$, while $s_{N+1}<0$ as $s_1>0$ and $\lim_{n\to \infty} s_n=-\infty$. In this case, we estimate
	\begin{align*}
		\|D^{s_N}u\|^2= \int_{\R} (|\xi|^{s_N}\hat u(\xi))^2\, d\xi &= \int_{|\xi>1|}|\xi|^{b}\hat u(\xi) |\xi|^{2s_N-b}\hat u(\xi)\,d\xi + \int_{|\xi|\leq 1}(|\xi|^{s_N}\hat u(\xi))^2\, d\xi \\
		&\leq \int_{|\xi>1|}|\xi|^{b}\hat u(\xi) \hat u(\xi)\,d\xi + \int_{|\xi|\leq 1}(\hat u(\xi))^2\, d\xi,
	\end{align*}
	where $2s_N-b=s_{N+1}<0$. Then the results follows directly from Young's inequality.
\end{proof}

\bigskip

\noindent \textbf{Acknowledgements}\\
The authors would like to thank Christian Klein for helpful discussions and Mariana Haragus for her suggestions on the method. This research was carried out while G.B. was supported by the Deutsche Forschungsgemeinschaft (DFG, German Research Foundation) --
Project-ID 258734477 -- SFB 1173.

\end{document}